\definecolor{highlightArrow}{rgb}{0.33, 0.25, 0.85} 
\definecolor{globjects}{rgb}{0.0,0.6,0.2}
\definecolor{functorL}{rgb}{0.7,0.0,0.0}
\definecolor{functorM}{rgb}{0.0,0.2,0.4}
\definecolor{functorF1}{rgb}{0.0,0.5,0.2}
\definecolor{functorF2}{rgb}{0.5,0.0,0.6}
\definecolor{functorF3}{rgb}{0.75,0.6,0.0}
\tikzset{> =stealth}
\newcommand{\addQEDstyle}[2]{\AtBeginEnvironment{#1}{\pushQED{\qed}\renewcommand{\qedsymbol}{#2}}\AtEndEnvironment{#1}{\popQED}}
\theoremstyle{plain}
\newtheorem{theorem}{Theorem}[section]
\newtheorem*{theorem*}{Theorem}
\newtheorem{lemma}[theorem]{Lemma}
\newtheorem{proposition}[theorem]{Proposition}
\theoremstyle{definition}
\newtheorem{definition}[theorem]{Definition}
\theoremstyle{remark}
\newtheorem{remark}[theorem]{Remark}
\renewcommand{\epsilon}{\varepsilon}
\renewcommand{\phi}{\varphi}
\renewcommand{\C}{\mathcal{C}}
\newcommand{\B}{\mathcal{B}}
\newcommand{\D}{\mathcal{D}}
\newcommand{\Laxop}{\mathrm{Lax}_\mathrm{op}}
\newcommand{\op}{^\mathrm{op}}
\newcommand{\inv}{^{-1}}
\newcommand{\id}{\mathrm{id}}
\newcommand{\Id}{\mathrm{Id}}
\newcommand{\Dist}{\mathrm{Dist}}
\newcommand{\Cat}{\mathrm{Cat}}
\newcommand{\Hom}{\mathrm{Hom}}
\newcommand{\Ext}{\mathrm{Ext}}
\tikzset{
on layer/.style={
    execute at begin scope={%
      \pgfonlayer{#1}%
      \tikz@options
      },
    execute at end scope={\endpgfonlayer}
  }
}%
\tikzset{every picture/.append style={semithick},
         every scope/.append style={semithick}}
\tikzset{dot/.style={circle,draw=black,fill=black,minimum size=1mm,inner sep=0mm}}
\tikzset{cross line/.style={preaction={draw=white, -,line width=6pt}}}
\title{2-dimensional bifunctor theorems and distributive laws}
\author[P. F. Faul]{Peter F. Faul}
\address{Department of Pure Mathematics and Mathematical Statistics\\ University of Cambridge}
\email{peter@faul.io}
\author[G. Manuell]{Graham Manuell\thanks{The second author acknowledges partial financial support from the Centre for Mathematics of the University of Coimbra (UIDB/00324/2020, funded by the Portuguese Government through FCT/MCTES)}}
\address{Centre for Mathematics \\ University of Coimbra}
\email{graham@manuell.me}
\author[J. Siqueira]{Jos\'e Siqueira\thanks{This study was financed in part by the Coordenação de Aperfeiçoamento de Pessoal de Nível Superior - Brasil (CAPES), which supported the CAPES scholar Jos\'e Siqueira (process n$^\circ$ 88881.128278/2016-01). }}
\address{Department of Pure Mathematics and Mathematical Statistics\\ University of Cambridge}
\email{jose.siqueira@cantab.net}
\date{\today}
\subjclass[2010]{18D05, 18C15}
\keywords{morphism of bicategories, triple, braiding, curry, exponential}
\begin{document}

\maketitle

\begin{abstract}
    In this paper we consider the conditions that need to be satisfied by two families of pseudofunctors with a common codomain for them to be collated into a bifunctor. We observe similarities between these conditions and distributive laws of monads before providing a unified framework from which both of these results may be inferred. We do this by proving a version of the bifunctor theorem for lax functors.
    We then show that these generalised distributive laws may be arranged into a 2-category $\Dist(\B,\C,\D)$, which is equivalent to $\Laxop(\B,\Laxop(\C,\D))$. The collation of a distributive law into its associated bifunctor extends to a 2-functor into $\Laxop(\B \times \C, \D)$, which corresponds to uncurrying via the aforementioned equivalence.
    We also describe subcategories on which collation itself restricts to an equivalence. Finally, we exhibit a number of natural categorical constructions as special cases of our result.
\end{abstract}

\section{Introduction}

Fix categories $\B$, $\C$ and $\D$ and consider a bifunctor $P\colon \B \times \C \to \D$.
By holding each of the inputs of $P$ constant in turn we obtain functors $M_B = P(B,-)$ and $L_C = P(-,C)$ for each object $B$ in $\B$ and each object $C$ in $\C$.
It is a natural to consider when two such families of functors may be rearranged back into a bifunctor. This is the bifunctor theorem and it appears as the first proposition in \cite{maclane}.

\begin{theorem}\label{thm:Mac}
Let $\B$, $\C$ and $\D$ be categories and for each object $B$ in $\B$ and each object $C$ in $\C$ let $L_C \colon \B \to \D$ and $M_B \colon \C \to \D$ be functors such that $L_C(B) = M_B(C)$. Then there exists a bifunctor $P \colon \B \times \C \to \D$ with $L_C = P(-,C)$ and $M_B = P(B,-)$ if and only if for all morphisms $f\colon B_1 \to B_2$ in $\B$ and $g\colon C_1 \to C_2$ in $\C$ we have $L_{C_2}(f)M_{B_1}(g) = M_{B_2}(g)L_{C_1}(f)$. In this case, $P(B,C) = L_C(B) = M_B(C)$ and $P(f,g) = M_{B_2}(g)L_{C_1}(f)$.
\end{theorem}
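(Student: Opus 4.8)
The plan is to verify the two directions of the biconditional separately, starting with the easy implication. If such a bifunctor $P$ exists, then for $f\colon B_1 \to B_2$ and $g\colon C_1 \to C_2$ the morphism $P(f,g)$ factors in $\B \times \C$ as $(f,g) = (f, \id_{C_2}) \circ (\id_{B_1}, g) = (\id_{B_2}, g) \circ (f, \id_{C_1})$, so applying the functor $P$ and using functoriality gives $P(f,\id_{C_2})P(\id_{B_1},g) = P(\id_{B_2},g)P(f,\id_{C_1})$; rewriting $P(f,\id_C) = L_C(f)$ and $P(\id_B,g) = M_B(g)$ yields exactly $L_{C_2}(f)M_{B_1}(g) = M_{B_2}(g)L_{C_1}(f)$, which is the claimed commutation relation.

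For the converse, I would \emph{define} $P$ by the formulas forced on us: $P(B,C) = L_C(B) = M_B(C)$ on objects and $P(f,g) = M_{B_2}(g)L_{C_1}(f)$ on a morphism $(f,g)\colon (B_1,C_1) \to (B_2,C_2)$. Note that by the hypothesised commutation this is equal to the other composite $L_{C_2}(f)M_{B_1}(g)$, a fact that will be used repeatedly. Then I would check the two functoriality axioms. Preservation of identities is immediate: $P(\id_B,\id_C) = M_B(\id_C)L_C(\id_B) = \id_{M_B(C)}\circ\id_{L_C(B)} = \id_{P(B,C)}$, using that $L_C$ and $M_B$ are functors and that $M_B(C) = L_C(B) = P(B,C)$. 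For composition, given $(f_1,g_1)\colon (B_1,C_1)\to(B_2,C_2)$ and $(f_2,g_2)\colon(B_2,C_2)\to(B_3,C_3)$, I would expand $P(f_2f_1,g_2g_1) = M_{B_3}(g_2g_1)L_{C_1}(f_2f_1) = M_{B_3}(g_2)M_{B_3}(g_1)L_{C_1}(f_2)L_{C_1}(f_1)$ and separately expand $P(f_2,g_2)P(f_1,g_1) = M_{B_3}(g_2)L_{C_2}(f_2)M_{B_2}(g_1)L_{C_1}(f_1)$; the two agree precisely because the commutation relation lets us swap the middle pair $L_{C_2}(f_2)M_{B_2}(g_1) = M_{B_3}(g_1)L_{C_2}(f_2)$ — wait, one must be slightly careful with indices here, so I would instead apply the relation to $f_2$ and $g_1$ in the form $L_{C_3}(f_2)M_{B_2}(g_1) = M_{B_3}(g_1)L_{C_2}(f_2)$ after first rewriting one of the expressions using the two-sided equality noted above. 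Finally I would confirm that $P(-,C)$ and $P(B,-)$ recover $L_C$ and $M_B$: this follows by setting $g = \id_C$ (so $P(f,\id_C) = M_{B_2}(\id_C)L_C(f) = L_C(f)$) and $f = \id_B$ (so $P(\id_B,g) = M_B(g)L_{C_1}(\id_B) = M_B(g)$) respectively.

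The only genuinely delicate point is the composition axiom, where the index bookkeeping must be done carefully so that the commutation relation is applied to a composable pair of morphisms with the correct source and target; everything else is a routine unwinding of the definitions of functor and product category. I expect this to occupy most of the written proof, though it is conceptually trivial once the right intermediate expression is chosen.
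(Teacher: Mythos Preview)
The paper does not supply its own proof of this theorem; it is stated as background and attributed to Mac Lane. Your argument is the standard one and is correct.

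One small clean-up: in the composition check you second-guess yourself unnecessarily. The middle two factors of $P(f_2,g_2)P(f_1,g_1)$ are $L_{C_2}(f_2)M_{B_2}(g_1)$, and applying the commutation relation directly to $f_2\colon B_2 \to B_3$ and $g_1\colon C_1 \to C_2$ gives $L_{C_2}(f_2)M_{B_2}(g_1) = M_{B_3}(g_1)L_{C_1}(f_2)$, which is exactly what is needed to match $M_{B_3}(g_2g_1)L_{C_1}(f_2f_1)$. No preliminary rewriting via ``the two-sided equality'' is required; your first instinct was right apart from a typo in the last subscript.
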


While the result is somewhat trivial, it can be remarkably useful. For instance, one convenient use case is in defining the $\Ext$ bifunctor of abelian groups.

It is natural to ask what an analogue of this result would be in the 2-categorical setting. For families of pseudofunctors $L$ and $M$, the idea is that the equality $L_{C_2}(f)M_{B_1}(g) = M_{B_2}(g)L_{C_1}(f)$ above should be replaced with an invertible 2-morphism $\sigma$. This invertible 2-morphism is now required to satisfy a number of coherence conditions which were invisible in the classical case. If we depict the $M$ families with blue wires, the $L$ families with red wires and $\sigma$ with a braiding of a red wire over a blue, then the following string diagrams illustrate some of these conditions.

  \begin{equation*}
   \begin{tikzpicture}[scale=0.8,baseline={([yshift=-0.5ex]current bounding box.center)}]
    \begin{scope}[on layer=over]
    \path coordinate[dot, functorM] (mu)
    +(0,1.5) coordinate (d)
    +(-1,-1) coordinate (mbl)
    +(1,-1) coordinate (mbr);
    \path (d -| mbl) ++(-1.25,0) coordinate (tl) ++(0,-1.25) coordinate (tl2);
    \path (mbr) ++(0,-1.5) coordinate (br) ++(1.5,0) coordinate (brr) ++(0,0.25) coordinate (brr2);
    \path (mbl) ++(0,-1.5) coordinate (bl);
    \end{scope}
    \draw[functorM] (bl) -- (mbl) to[out=90, in=180] (mu.center) to[out=0, in=90] (mbr) -- (br)
                    (mu) -- (d);
    \draw[functorL, cross line] (tl) -- (tl2) to[out=270, in=90] (brr);
    \coordinate (cornerNW) at ($(tl) + (-0.5,0)$);
    \coordinate (cornerSE) at ($(brr) + (0.5,0)$);
    \draw (cornerNW) rectangle (cornerSE);
   \end{tikzpicture}
   \,=\,
   \begin{tikzpicture}[scale=0.8,baseline={([yshift=-0.5ex]current bounding box.center)}]
    \begin{scope}[on layer=over]
    \path coordinate[dot, functorM] (mu)
    +(0,1.75) coordinate (d)
    +(-1,-1) coordinate (mbl)
    +(1,-1) coordinate (mbr);
    \path (d -| mbl) ++(-1.25,0) coordinate (tl) ++(0,-0.25) coordinate (tl2);
    \path (mbr) ++(0,-1.25) coordinate (br) ++(1.5,0) coordinate (brr) ++(0,1.4) coordinate (brr2);
    \path (mbl) ++(0,-1.25) coordinate (bl);
    \end{scope}
    \draw[functorM] (bl) -- (mbl) to[out=90, in=180] (mu.center) to[out=0, in=90] (mbr) -- (br)
                    (mu) -- (d);
    \draw[functorL, cross line] (tl) to[out=270, in=90] (brr2) -- (brr);
    \coordinate (cornerNW) at ($(tl) + (-0.5,0)$);
    \coordinate (cornerSE) at ($(brr) + (0.5,0)$);
    \draw (cornerNW) rectangle (cornerSE);
   \end{tikzpicture}
  \end{equation*}
  
  \vspace{-3pt plus 1pt}
  \begin{equation*}
   \begin{tikzpicture}[scale=0.8,baseline={([yshift=-0.5ex]current bounding box.center)}]
    \begin{scope}[on layer=over]
    \path coordinate[dot, functorL] (mu)
    +(0,1.5) coordinate (d)
    +(1,-1) coordinate (mbl)
    +(-1,-1) coordinate (mbr);
    \path (d -| mbl) ++(1.25,0) coordinate (tl) ++(0,-1.25) coordinate (tl2);
    \path (mbr) ++(0,-1.5) coordinate (br) ++(-1.5,0) coordinate (brr) ++(0,0.25) coordinate (brr2);
    \path (mbl) ++(0,-1.5) coordinate (bl);
    \end{scope}
    \draw[functorM] (tl) -- (tl2) to[out=270, in=90] (brr);
    \draw[functorL, cross line] (bl) -- (mbl) to[out=90, in=0] (mu.center) to[out=180, in=90] (mbr) -- (br)
                                (mu) -- (d);
    \coordinate (cornerNW) at ($(tl) + (0.5,0)$);
    \coordinate (cornerSE) at ($(brr) + (-0.5,0)$);
    \draw (cornerNW) rectangle (cornerSE);
   \end{tikzpicture}
   \,=\,
   \begin{tikzpicture}[scale=0.8,baseline={([yshift=-0.5ex]current bounding box.center)}]
    \begin{scope}[on layer=over]
    \path coordinate[dot, functorL] (mu)
    +(0,1.75) coordinate (d)
    +(1,-1) coordinate (mbl)
    +(-1,-1) coordinate (mbr);
    \path (d -| mbl) ++(1.25,0) coordinate (tl) ++(0,-0.25) coordinate (tl2);
    \path (mbr) ++(0,-1.25) coordinate (br) ++(-1.5,0) coordinate (brr) ++(0,1.4) coordinate (brr2);
    \path (mbl) ++(0,-1.25) coordinate (bl);
    \end{scope}
    \draw[functorM] (tl) to[out=270, in=90] (brr2) -- (brr);
    \draw[functorL, cross line] (bl) -- (mbl) to[out=90, in=0] (mu.center) to[out=180, in=90] (mbr) -- (br)
                                (mu) -- (d);
    \coordinate (cornerNW) at ($(tl) + (0.5,0)$);
    \coordinate (cornerSE) at ($(brr) + (-0.5,0)$);
    \draw (cornerNW) rectangle (cornerSE);
   \end{tikzpicture}
  \end{equation*}

These diagrams may remind the reader of a distributive laws of monads or perhaps of oplax and lax transformations. We will prove a \emph{lax} bifunctor theorem which recovers both the pseudo-bifunctor theorem and distributive laws of monads as special cases. For this reason we call this data a distributive law of lax functors.
Other consequences of the result will be discussed at the end of the paper.

The link to (op)lax transformations is explained by a correspondence between this data and objects of the 2-category $\Laxop(\B,\Laxop(\C,\D))$. This suggests notions of 1-morphism and 2-morphism of distributive laws giving a category $\Dist(\B,\C,\D)$ which is equivalent to $\Laxop(\B,\Laxop(\C,\D))$. The process of `collating' a distributive law into a lax bifunctor extends to a 2-functor from $\Dist(\B,\C,\D)$ to $\Laxop(\B \times \C,\D)$. Moreover, this collation 2-functor can be seen to correspond to uncurrying via the equivalence between $\Dist(\B,\C,\D)$ and $\Laxop(\B,\Laxop(\C,\D))$.

\section{Background}

\subsection*{String diagrams}

We will make extensive use of string diagrams in this paper. For an introduction to string diagrams see \cite{marsden2014category}. Our string diagrams will be read from bottom to top (for vertical composition) and left to right (for horizontal composition). We will not colour the regions of the diagrams, which should be clear from context. However, as explained later we will colour wires when we want to indicate that a lax functor has been applied. Our approach to representing lax functors can be compared to the string diagram calculus for pseudofunctors described in \cite{verdon2020unitary}.

\subsection*{2-categories and lax functors}

By a \emph{(strict) 2-category} we mean any category enriched over the cartesian monoidal category $\Cat$. We may think of a 2-category as consisting of \emph{objects}, \emph{1-morphisms} and \emph{2-morphisms}. Here 1-morphisms compose like functors, and we denote this by juxtaposition, and 2-morphisms compose like natural transformations --- that is to say, they may be composed vertically, which we denote with $\circ$ (or juxtaposition) or horizontally, which we denote with $\ast$.

There are a number of ways to generalise functors to this setting and we shall concern ourselves with two of them: lax functors and pseudofunctors.

\begin{definition}
  A \emph{lax functor} $\mathcal{F}$ between 2-categories $\C$ and $\D$ consists of a function $\mathcal{F}$ sending objects $C$ in $\C$ to objects $\mathcal{F}(C)$ in $\D$ and for each pair of objects $C_1$ and $C_2$ in $\C$ a functor $\mathcal{F}_{C_1,C_2} \colon \Hom(C_1,C_2) \to \Hom(\mathcal{F}(C_1),\mathcal{F}(C_2))$, for which we use the same name. 
  Additionally, for each pair of composable 1-morphisms $(f,g)$ we have a 2-morphism $\gamma_{g,f} \colon \mathcal{F}(g) \circ \mathcal{F}(f) \to \mathcal{F}(g \circ f)$ called the \emph{compositor}, and for each object $A$ in $\C$ we have a 2-morphism $\iota_A \colon \id_{\mathcal{F}(A)} \to \mathcal{F}(\id_A)$ called the \emph{unitor}. This data satisfies the following constraints.
  
  \begin{enumerate}
    \item Let $\alpha\colon f_1 \to f_2$ and $\beta\colon g_1 \to g_2$ be horizontally composable 2-morphisms. Then the compositors must satisfy the following naturality condition.
    \begin{equation*}
   \begin{tikzpicture}[scale=1.0,baseline={([yshift=-0.5ex]current bounding box.center)}]
    \begin{scope}[on layer=over]
    \path coordinate[dot, functorF1, label=below:$\gamma_{g_2,f_2}$] (mu)
    +(-1,-1) coordinate (mbl)
    +(1,-1) coordinate (mbr)
    ++(0,0.625) coordinate (du2) ++(0,0.625) coordinate[label=above:$P(
    g_2f_2)$] (du);
    \path (mbl) ++(0,-0.375) coordinate[dot, functorF1, label=left:$P(\alpha)$] (bl2) ++(0,-0.75) coordinate[label=below:$P(f_1)$] (bl);
    \path (mbr) ++(0,-0.375) coordinate[dot, functorF1, label=right:$P(\beta)$] (br2) ++(0,-0.75) coordinate[label=below:$P(g_1)$] (br);
    \end{scope}
    \draw[functorF1] (bl) -- (bl2) -- (mbl) to[out=90, in=180] (mu.center) to[out=0, in=90] (mbr) -- (br2) -- (br)
                    (mu) -- (du2) -- (du);
    \coordinate (tl) at (mbl |- du);
    \coordinate (cornerNW) at ($(tl) + (-1.25,0)$);
    \coordinate (cornerSE) at ($(br) + (1.25,0)$);
    \draw (cornerNW) rectangle (cornerSE);
   \end{tikzpicture}
   \enspace=\enspace
   \begin{tikzpicture}[scale=1.0,baseline={([yshift=-0.5ex]current bounding box.center)}]
    \begin{scope}[on layer=over]
    \path coordinate[dot, functorF1, label=below:$\gamma_{g_1,f_1}$] (mu)
    +(-1,-1) coordinate (mbl)
    +(1,-1) coordinate (mbr)
    ++(0,0.625) coordinate[dot, functorF1, label=right:$P(\beta \ast \alpha)$] (du2) ++(0,0.625) coordinate[label=above:$P(g_2f_2)$] (du);
    \path (mbl) ++(0,-0.375) coordinate (bl2) ++(0,-0.75) coordinate[label=below:$P(f_1)$] (bl);
    \path (mbr) ++(0,-0.375) coordinate (br2) ++(0,-0.75) coordinate[label=below:$P(
    g_1)$] (br);
    \end{scope}
    \draw[functorF1] (bl) -- (bl2) -- (mbl) to[out=90, in=180] (mu.center) to[out=0, in=90] (mbr) -- (br2) -- (br)
                    (mu) -- (du2) -- (du);
    \coordinate (tl) at (mbl |- du);
    \coordinate (cornerNW) at ($(tl) + (-0.75,0)$);
    \coordinate (cornerSE) at ($(br) + (0.75,0)$);
    \draw (cornerNW) rectangle (cornerSE);
   \end{tikzpicture}
   \end{equation*}
    
    \item If $f \colon X \to Y$, $g\colon Y \to Z$ and $h\colon Z \to W$ are 1-morphisms, then the following associativity axiom must be satisfied.
      \begin{equation*}
   \begin{tikzpicture}[scale=1.0,baseline={([yshift=-0.5ex]current bounding box.center)}]
    \begin{scope}[on layer=over]
    \path coordinate[dot, functorF1, label=below:$\gamma_{h,gf}$] (muU)
    +(0,1.0) coordinate[label=above:$P(hgf)$] (dU)
    +(-1,-1) coordinate (mblU)
    +(1,-1) coordinate (mbrU);
    \path (mblU) ++(0,-0.5) coordinate[dot, functorF1, label=below:$\gamma_{g,f}$] (mu)
    +(-0.75,-0.75) coordinate (mbl)
    +(0.75,-0.75) coordinate (mbr);
    \path (mbr) ++(0,-1.0) coordinate[label=below:$P(g)$] (br);
    \path (mbl) ++(0,-1.0) coordinate[label=below:$P(f)$] (bl);
    \coordinate[label=below:$P(h)$] (brr) at (mbrU |- br);
    \end{scope}
    \draw[functorF1] (bl) -- (mbl) to[out=90, in=180] (mu.center) to[out=0, in=90] (mbr) -- (br)
                    (mu) -- (mblU) to[out=90, in=180] (muU.center) to[out=0, in=90] (mbrU) -- (brr)
                    (muU) -- (dU);
    \coordinate (tl) at (mbl |- dU);
    \coordinate (cornerNW) at ($(tl) + (-0.5,0)$);
    \coordinate (cornerSE) at ($(brr) + (0.5,0)$);
    \draw (cornerNW) rectangle (cornerSE);
   \end{tikzpicture}
   \enspace=\enspace
   \begin{tikzpicture}[scale=1.0,baseline={([yshift=-0.5ex]current bounding box.center)}]
    \begin{scope}[on layer=over]
    \path coordinate[dot, functorF1, label=below:$\gamma_{hg,f}$] (muU)
    +(0,1.0) coordinate[label=above:$P(hgf)$] (dU)
    +(-1,-1) coordinate (mblU)
    +(1,-1) coordinate (mbrU);
    \path (mbrU) ++(0,-0.5) coordinate[dot, functorF1, label=below:$\gamma_{h,g}$] (mu)
    +(-0.75,-0.75) coordinate (mbl)
    +(0.75,-0.75) coordinate (mbr);
    \path (mbr) ++(0,-1.0) coordinate[label=below:$P(h)$] (br);
    \path (mbl) ++(0,-1.0) coordinate[label=below:$P(g)$] (bl);
    \coordinate[label=below:$P(f)$] (bll) at (mblU |- bl);
    \end{scope}
    \draw[functorF1] (bl) -- (mbl) to[out=90, in=180] (mu.center) to[out=0, in=90] (mbr) -- (br)
                    (mu) -- (mbrU) to[out=90, in=0] (muU.center) to[out=180, in=90] (mblU) -- (bll)
                    (muU) -- (dU);
    \coordinate (tr) at (mbr |- dU);
    \coordinate (cornerNE) at ($(tr) + (0.5,0)$);
    \coordinate (cornerSW) at ($(bll) + (-0.5,0)$);
    \draw (cornerNE) rectangle (cornerSW);
   \end{tikzpicture}
  \end{equation*}
  
    \item If $f \colon X \to Y$ is a 1-morphism then the unit axioms
    must be satisfied.
    \begin{equation*} 
   \begin{tikzpicture}[scale=1.0,baseline={([yshift=-0.5ex]current bounding box.center)}]
    \begin{scope}[on layer=over]
    \path coordinate[dot, functorF1, label=below:$\gamma_{f,\id_X}$] (muU)
    +(0,1.0) coordinate[label=above:$P(f)$] (dU)
    +(-1,-1) coordinate (mblU)
    +(1,-1) coordinate (mbrU);
    \path (mblU) ++(0,-0.5) coordinate[dot, functorF1, label=below:$\iota_X$] (mu);
    \path (mbrU) ++ (0,-1.75) coordinate[label=below:$P(f)$] (brr);
    \end{scope}
    \draw[functorF1] (mu) -- (mblU) to[out=90, in=180] (muU.center) to[out=0, in=90] (mbrU) -- (brr)
                    (muU) -- (dU);
    \coordinate (tl) at (mblU |- dU);
    \coordinate (cornerNW) at ($(tl) + (-0.5,0)$);
    \coordinate (cornerSE) at ($(brr) + (0.5,0)$);
    \draw (cornerNW) rectangle (cornerSE);
   \end{tikzpicture}
   \enspace=\enspace
   \begin{tikzpicture}[scale=1.0,baseline={([yshift=-0.5ex]current bounding box.center)}]
    \begin{scope}[on layer=over]
    \path coordinate (muU)
    +(0,1.0) coordinate[label=above:$P(f)$] (dU)
    +(0,-2.75) coordinate[label=below:$P(f)$] (b);
    \end{scope}
    \draw[functorF1] (dU) -- (b);
    \coordinate (cornerNW) at ($(dU) + (-0.5,0)$);
    \coordinate (cornerSE) at ($(b) + (0.5,0)$);
    \draw (cornerNW) rectangle (cornerSE);
   \end{tikzpicture}
   \enspace=\enspace
   \begin{tikzpicture}[scale=1.0,baseline={([yshift=-0.5ex]current bounding box.center)}]
    \begin{scope}[on layer=over]
    \path coordinate[dot, functorF1, label=below:$\gamma_{\id_Y,f}$] (muU)
    +(0,1.0) coordinate[label=above:$P(f)$] (dU)
    +(1,-1) coordinate (mbrU)
    +(-1,-1) coordinate (mblU);
    \path (mbrU) ++(0,-0.5) coordinate[dot, functorF1, label=below:$\iota_Y$] (mu);
    \path (mblU) ++ (0,-1.75) coordinate[label=below:$P(f)$] (bll);
    \end{scope}
    \draw[functorF1] (mu) -- (mbrU) to[out=90, in=0] (muU.center) to[out=180, in=90] (mblU) -- (bll)
                    (muU) -- (dU);
    \coordinate (tr) at (mbrU |- dU);
    \coordinate (cornerNE) at ($(tr) + (0.5,0)$);
    \coordinate (cornerSW) at ($(bll) + (-0.5,0)$);
    \draw (cornerNE) rectangle (cornerSW);
   \end{tikzpicture}
  \end{equation*}
  \end{enumerate}
  
  When all $\gamma$ and $\iota$ maps are isomorphisms, then we call $F$ a \emph{pseudofunctor}. When just the $\iota$ maps are isomorphisms we call $F$ a \emph{unitary lax functor}.
\end{definition}

There is a notion of an oplax transformation between two lax functors defined as follows. 

\begin{definition}
Let $(\mathcal{F}_1, \gamma_1 ,\iota_1), (\mathcal{F}_2, \gamma_2, \iota_2)\colon \mathcal{X} \to \mathcal{Y}$ be two lax functors. An oplax transformation $\rho \colon \mathcal{F}_1 \to \mathcal{F}_2$ is given by two families:

\begin{itemize}
  \item For each object $X \in \mathcal{X}$, a 1-morphism $\rho_X \colon \mathcal{F}_1(X) \to \mathcal{F}_2(X)$.
  \item For each 1-morphism $g\colon X \to Y \in \mathcal{X}$, a 2-morphism $\rho_g \colon \rho_Y \mathcal{F}_1(g) \to \mathcal{F}_2(g) \rho_X$.
\end{itemize}

They must satisfy a number of coherence conditions, which we express below in terms of string diagrams.
For ease of understanding we use green to represent morphisms in the image of $\mathcal{F}_1$ (and associated data) and purple for morphisms in the image of $\mathcal{F}_2$. The 2-morphisms $\rho_g$ will be represented by a crossing of a wire representing $\rho_Y$ and $\rho_X$ over one representing $\mathcal{F}_1(g)$ and $\mathcal{F}_2(g)$.
\begin{enumerate}
\item If $g\colon X \to Y$ and $f\colon Y \to Z$ are 1-morphisms in $\mathcal{X}$, then $\rho$ must respect composition as shown in the following string diagram.
\\ \vspace{-3pt plus 1pt}
\begin{equation*}
 \begin{tikzpicture}[scale=1.0,baseline={([yshift=-0.5ex]current bounding box.center)}]
  \begin{scope}[on layer=over]
  \path coordinate[dot, functorF1, label=below:$\gamma_{f,g}$] (mu)
  +(0,1.75) coordinate[label=above:$\mathcal{F}_2(fg)$] (d)
  +(-1,-1) coordinate (mbl)
  +(1,-1) coordinate (mbr);
  \path (d -| mbl) ++(-1.25,0) coordinate[label=above:$\rho_X$] (tl) ++(0,-0.25) coordinate (tl2);
  \path (mbr) ++(0,-1.25) coordinate[label=below:$\mathcal{F}_1(f)$] (br) ++(1.5,0) coordinate[label=below:$\vphantom{\mathcal{F}}\rho_Z$] (brr) ++(0,1.4) coordinate (brr2);
  \path (mbl) ++(0,-1.25) coordinate[label=below:$\mathcal{F}_1(g)$] (bl);
  \end{scope}
  \newcommand{\crossing}{(tl) to[out=270, in=90] (brr2) -- (brr)}
  \newcommand{\fork}{
   (bl) -- (mbl) to[out=90, in=180] (mu.center) to[out=0, in=90] (mbr) -- (br)
   (mu) -- (d)
  }
  \coordinate (cornerNW) at ($(tl) + (-0.5,0)$);
  \coordinate (cornerSE) at ($(brr) + (0.5,0)$);
  \begin{scope}
  \clip \crossing -- (cornerSE) -- (cornerNW -| cornerSE) -- cycle;
  \draw[functorF2] \fork;
  \end{scope}
  \begin{scope}
  \clip (cornerNW) -- \crossing -- (cornerNW |- cornerSE) -- cycle;
  \draw[functorF1] \fork;
  \end{scope}
  \draw[cross line] \crossing;
  \draw (cornerNW) rectangle (cornerSE);
 \end{tikzpicture}
 \enspace =\enspace
 \begin{tikzpicture}[scale=1.0,baseline={([yshift=-0.5ex]current bounding box.center)}]
  \begin{scope}[on layer=over]
  \path coordinate[dot, functorF2, label=below:$\gamma_{f,g}$] (mu)
  +(0,1.5) coordinate[label=above:$\mathcal{F}_2(fg)$] (d)
  +(-1,-1) coordinate (mbl)
  +(1,-1) coordinate (mbr);
  \path (d -| mbl) ++(-1.25,0) coordinate[label=above:$\rho_X$] (tl) ++(0,-1.25) coordinate (tl2);
  \path (mbr) ++(0,-1.5) coordinate[label=below:$\mathcal{F}_1(f)$] (br) ++(1.5,0) coordinate[label=below:$\vphantom{\mathcal{F}}\rho_Z$] (brr) ++(0,0.25) coordinate (brr2);
  \path (mbl) ++(0,-1.5) coordinate[label=below:$\mathcal{F}_1(g)$] (bl);
  \end{scope}
  \newcommand{\crossing}{(tl) -- (tl2) to[out=270, in=90] (brr)}
  \newcommand{\fork}{
   (bl) -- (mbl) to[out=90, in=180] (mu.center) to[out=0, in=90] (mbr) -- (br)
   (mu) -- (d)
  }
  \coordinate (cornerNW) at ($(tl) + (-0.5,0)$);
  \coordinate (cornerSE) at ($(brr) + (0.5,0)$);
  \begin{scope}
  \clip \crossing -- (cornerSE) -- (cornerNW -| cornerSE) -- cycle;
  \draw[functorF2] \fork;
  \end{scope}
  \begin{scope}
  \clip (cornerNW) -- \crossing -- (cornerNW |- cornerSE) -- cycle;
  \draw[functorF1] \fork;
  \end{scope}
  \draw[cross line] \crossing;
  \draw (cornerNW) rectangle (cornerSE);
 \end{tikzpicture}
\end{equation*}
  
\item For each object $X \in \mathcal{X}$, $\rho$ must respect the identity $\id_X$. We express this with the following string diagram.
\\ \vspace{-3pt plus 1pt}
\begin{equation*}
 \begin{tikzpicture}[scale=1.0,baseline={([yshift=-0.5ex]current bounding box.center)}]
  \begin{scope}[on layer=over]
  \path coordinate (mu)
  +(0,1.5) coordinate[label=above:$\vphantom{\rho_X}\smash{\mathcal{F}_2(\id_X)}$] (d)
  ++(0,-1.0) coordinate[dot, functorF1, label=below:$\iota_X$] (mb) ++(0,-1.25) coordinate (b);
  \path (d) ++(-1.0,0) coordinate[label=above:$\rho_X$] (tl) ++(0,-0.25) coordinate (tl2);
  \path (b) ++ (1.0,0) coordinate[label=below:$\rho_X$] (brr) ++(0,0.5) coordinate (rr);
  \end{scope}
  \newcommand{\crossing}{(tl) -- (tl2) to[out=270, in=90] (rr) -- (brr)}
  \newcommand{\fork}{
   (d) -- (mu) -- (mb)
  }
  \coordinate (cornerNW) at ($(tl) + (-0.5,0)$);
  \coordinate (cornerSE) at ($(brr) + (0.5,0)$);
  \begin{scope}
  \clip \crossing -- (cornerSE) -- (cornerNW -| cornerSE) -- cycle;
  \draw[functorF2] \fork;
  \end{scope}
  \begin{scope}
  \clip (cornerNW) -- \crossing -- (cornerNW |- cornerSE) -- cycle;
  \draw[functorF1] \fork;
  \end{scope}
  \draw[cross line] \crossing;
  \draw (cornerNW) rectangle (cornerSE);
 \end{tikzpicture}
 \enspace =\enspace
 \begin{tikzpicture}[scale=1.0,baseline={([yshift=-0.5ex]current bounding box.center)}]
  \begin{scope}[on layer=over]
  \path coordinate (mu)
  +(0,0.75) coordinate[label=above:$\vphantom{\rho_X}\smash{\mathcal{F}_2(\id_X)}$] (d)
  ++(0,-1.0) coordinate[dot, functorF2, label=below:$\iota_X$] (mb) ++(0,-2.0) coordinate (b);
  \path (d) ++(-1.0,0) coordinate[label=above:$\rho_X$] (tl) ++(0,-1.5) coordinate (tl2);
  \path (b) ++ (1.0,0) coordinate[label=below:$\rho_X$] (brr) ++(0,0.25) coordinate (rr);
  \end{scope}
  \newcommand{\crossing}{(tl) -- (tl2) to[out=270, in=90] (rr) -- (brr)}
  \newcommand{\fork}{
   (d) -- (mu) -- (mb)
  }
  \coordinate (cornerNW) at ($(tl) + (-0.5,0)$);
  \coordinate (cornerSE) at ($(brr) + (0.5,0)$);
  \draw[functorF2] \fork;
  \draw[cross line] \crossing;
  \draw (cornerNW) rectangle (cornerSE);
 \end{tikzpicture}
\end{equation*}

\item The family of maps $\rho_g$ is natural in $g$ in the sense that for each 1-morphisms $g,g' \colon X \to Y$ and 2-morphism $\alpha\colon g \to g'$ the following equality of string diagrams holds.
\\ \vspace{-3pt plus 1pt}
\begin{equation*}
 \begin{tikzpicture}[scale=1.0,baseline={([yshift=-0.5ex]current bounding box.center)}]
  \begin{scope}[on layer=over]
  \path coordinate (mu)
  +(0,0.875) coordinate[label=above:$\vphantom{\rho_X}\smash{\mathcal{F}_2(g')}$] (d)
  ++(0,-2.0) coordinate[dot, functorF1, label=left:$\mathcal{F}_1(\alpha)$] (mb) ++(0,-0.875) coordinate[label=below:$\mathcal{F}_1(g)$] (b);
  \path (d) ++(-1.0,0) coordinate[label=above:$\rho_X$] (tl) ++(0,-0.25) coordinate (tl2);
  \path (b) ++ (1.0,0) coordinate[label=below:$\vphantom{\mathcal{F}}\rho_Y$] (brr) ++(0,0.25) coordinate (rr);
  \end{scope}
  \newcommand{\crossing}{(tl) -- (tl2) to[out=270, in=90] (rr) -- (brr)}
  \newcommand{\fork}{
   (d) -- (mu) -- (mb) -- (b)
  }
  \coordinate (cornerNW) at ($(tl) + (-0.5,0)$);
  \coordinate (cornerSE) at ($(brr) + (0.5,0)$);
  \begin{scope}
  \clip \crossing -- (cornerSE) -- (cornerNW -| cornerSE) -- cycle;
  \draw[functorF2] \fork;
  \end{scope}
  \begin{scope}
  \clip (cornerNW) -- \crossing -- (cornerNW |- cornerSE) -- cycle;
  \draw[functorF1] \fork;
  \end{scope}
  \draw[cross line] \crossing;
  \draw (cornerNW) rectangle (cornerSE);
 \end{tikzpicture}
 \enspace =\enspace
 \begin{tikzpicture}[scale=1.0,baseline={([yshift=-0.5ex]current bounding box.center)}]
  \begin{scope}[on layer=over]
  \path coordinate[dot, functorF2, label=right:$\mathcal{F}_2(\alpha)$] (mu)
  +(0,0.875) coordinate[label=above:$\vphantom{\rho_X}\smash{\mathcal{F}_2(g')}$] (d)
  ++(0,-2.0) coordinate (mb) ++(0,-0.875) coordinate[label=below:$\mathcal{F}_1(g)$] (b);
  \path (d) ++(-1.0,0) coordinate[label=above:$\rho_X$] (tl) ++(0,-0.25) coordinate (tl2);
  \path (b) ++ (1.0,0) coordinate[label=below:$\vphantom{\mathcal{F}}\rho_Y$] (brr) ++(0,0.25) coordinate (rr);
  \end{scope}
  \newcommand{\crossing}{(tl) -- (tl2) to[out=270, in=90] (rr) -- (brr)}
  \newcommand{\fork}{
   (d) -- (mu) -- (mb) -- (b)
  }
  \coordinate (cornerNW) at ($(tl) + (-0.5,0)$);
  \coordinate (cornerSE) at ($(brr) + (0.5,0)$);
  \begin{scope}
  \clip \crossing -- (cornerSE) -- (cornerNW -| cornerSE) -- cycle;
  \draw[functorF2] \fork;
  \end{scope}
  \begin{scope}
  \clip (cornerNW) -- \crossing -- (cornerNW |- cornerSE) -- cycle;
  \draw[functorF1] \fork;
  \end{scope}
  \draw[cross line] \crossing;
  \draw (cornerNW) rectangle (cornerSE);
 \end{tikzpicture}
\end{equation*}
\end{enumerate}

If each 2-morphism $\rho_f$ is an isomorphism, we call $\rho$ a pseudonatural transformation.
A \emph{lax} transformation is similar to an oplax transformation, but has $\rho_f$ going in the opposite direction.

The composition of oplax transformations $\rho^1\colon \mathcal{F}_1 \to \mathcal{F}_2$ and $\rho^2\colon \mathcal{F}_2 \to \mathcal{F}_3$ has $(\rho^2\rho^1)_X = \rho^2_X \rho^1_X$ and $(\rho^2\rho^1)_f = \rho^2_f \rho^1_X \circ \rho^2_Y \rho^1_f$.
This is shown in the diagram below.
\\ \vspace{-3pt plus 1pt}
\begin{center}
 \begin{tikzpicture}[scale=0.9,baseline={([yshift=-0.5ex]current bounding box.center)}]
  \begin{scope}[on layer=over]
  \path coordinate (mu)
  +(0,0.875) coordinate[label=above:$\vphantom{\rho^1_X}\mathcal{F}_3(g)$] (d)
  ++(0,-2.875) coordinate (mb) ++(0,-0.875) coordinate[label=below:$\mathcal{F}_1(g)$] (b);
  \path (d) ++(-1.0,0) coordinate[label=above:$\rho^2_X$] (tl) ++(0,-0.125) coordinate (tl2);
  \path (tl) ++(-1.0,0) coordinate[label=above:$\rho^1_X$] (tll) ++(0,-0.125) coordinate (tll2);
  \path (b) ++ (1.0,0) coordinate[label=below:$\vphantom{\mathcal{F}}\rho^1_Y$] (br) ++(0,0.125) coordinate (r);
  \path (br) ++ (1.0,0) coordinate[label=below:$\vphantom{\mathcal{F}}\rho^2_Y$] (brr) ++(0,0.125) coordinate (rr);
  \end{scope}
  \newcommand{\crossingA}{(tll) -- (tll2) to[out=270, in=90] (r) -- (br)}
  \newcommand{\crossingB}{(brr) -- (rr) to[out=90, in=270] (tl2) -- (tl)}
  \newcommand{\fork}{
   (d) -- (mu) -- (mb) -- (b)
  }
  \coordinate (cornerNW) at ($(tll) + (-0.5,0)$);
  \coordinate (cornerSE) at ($(brr) + (0.5,0)$);
  \begin{scope}
  \clip (cornerNW -| cornerSE) -- (cornerSE) -- \crossingB -- cycle;
  \draw[functorF3] \fork;
  \end{scope}
  \begin{scope}
  \clip \crossingA -- \crossingB -- cycle;
  \draw[functorF2] \fork;
  \end{scope}
  \begin{scope}
  \clip (cornerNW) -- \crossingA -- (cornerNW |- cornerSE) -- cycle;
  \draw[functorF1] \fork;
  \end{scope}
  \draw[cross line] \crossingA;
  \draw[cross line] \crossingB;
  \draw (cornerNW) rectangle (cornerSE);
 \end{tikzpicture}
\end{center}
\end{definition}

\begin{definition}
Let $\rho^1, \rho^2\colon \mathcal{F}_1 \to \mathcal{F}_2$ be oplax transformations. A modification $\beth\colon \rho^1 \to \rho^2$ is given by a family of 2-morphisms $\beth_X\colon \rho^1_X \to \rho^2_X$ satisfying the following axiom.
\\ \vspace{-3pt plus 1pt}
\begin{equation*} 
 \begin{tikzpicture}[scale=1.0,baseline={([yshift=-0.5ex]current bounding box.center)}]
  \begin{scope}[on layer=over]
  \path coordinate (mu)
  +(0,0.875) coordinate[label=above:$\vphantom{\rho_X}\smash{\mathcal{F}_2(f)}$] (d)
  ++(0,-2.0) coordinate (mb) ++(0,-0.875) coordinate[label=below:$\vphantom{\rho_Y^1}\mathcal{F}_1(f)$] (b);
  \path (d) ++(-1.0,0) coordinate[label=above:$\rho^2_X$] (tl) ++(0,-0.625) coordinate (tl2) ++(0,-0.125) coordinate (tl3);
  \path (b) ++ (1.0,0) coordinate[label=below:$\rho^1_Y$] (brr) ++(0,0.625) coordinate[dot, label=right:$\beth_Y$] (rr) ++(0,0.125) coordinate (rr2);
  \end{scope}
  \newcommand{\crossing}{(tl) -- (tl2.center) -- (tl3) to[out=270, in=90] (rr2) -- (rr.center) -- (brr)}
  \newcommand{\fork}{
   (d) -- (mu) -- (mb) -- (b)
  }
  \coordinate (cornerNW) at ($(tl) + (-0.625,0)$);
  \coordinate (cornerSE) at ($(brr) + (0.875,0)$);
  \begin{scope}
  \clip \crossing -- (cornerSE) -- (cornerNW -| cornerSE) -- cycle;
  \draw[functorF2] \fork;
  \end{scope}
  \begin{scope}
  \clip (cornerNW) -- \crossing -- (cornerNW |- cornerSE) -- cycle;
  \draw[functorF1] \fork;
  \end{scope}
  \draw[cross line] \crossing;
  \draw (cornerNW) rectangle (cornerSE);
 \end{tikzpicture}
 \enspace =\enspace
 \begin{tikzpicture}[scale=1.0,baseline={([yshift=-0.5ex]current bounding box.center)}]
  \begin{scope}[on layer=over]
  \path coordinate (mu)
  +(0,0.875) coordinate[label=above:$\vphantom{\rho_X}\smash{\mathcal{F}_2(f)}$] (d)
  ++(0,-2.0) coordinate (mb) ++(0,-0.875) coordinate[label=below:$\vphantom{\rho_Y^1}\mathcal{F}_1(f)$] (b);
  \path (d) ++(-1.0,0) coordinate[label=above:$\rho^2_X$] (tl) ++(0,-0.625) coordinate[dot, label=left:$\beth_X$] (tl2) ++(0,-0.125) coordinate (tl3);
  \path (b) ++ (1.0,0) coordinate[label=below:$\rho^1_Y$] (brr) ++(0,0.625) coordinate (rr) ++(0,0.125) coordinate (rr2);
  \end{scope}
  \newcommand{\crossing}{(tl) -- (tl2.center) -- (tl3) to[out=270, in=90] (rr2) -- (rr.center) -- (brr)}
  \newcommand{\fork}{
   (d) -- (mu) -- (mb) -- (b)
  }
  \coordinate (cornerNW) at ($(tl) + (-0.875,0)$);
  \coordinate (cornerSE) at ($(brr) + (0.625,0)$);
  \begin{scope}
  \clip \crossing -- (cornerSE) -- (cornerNW -| cornerSE) -- cycle;
  \draw[functorF2] \fork;
  \end{scope}
  \begin{scope}
  \clip (cornerNW) -- \crossing -- (cornerNW |- cornerSE) -- cycle;
  \draw[functorF1] \fork;
  \end{scope}
  \draw[cross line] \crossing;
  \draw (cornerNW) rectangle (cornerSE);
 \end{tikzpicture}
\end{equation*}

Vertical and horizontal composition of modifications are defined componentwise in the obvious way.
\end{definition}

Let $\Laxop(\B,\C)$ denote the 2-category of lax functors between $\B$ and $\C$, oplax transformations and modifications.

\subsection*{Monads and distributive Laws}

Monads in a 2-category $\C$ may be thought of as lax functors out of the terminal 2-category into $\C$. If $\mathcal{F}$ is such a lax functor, then the underlying functor of the monad is given by the endomorphism $\mathcal{F}(\id)$ on $\mathcal{F}(*)$ with the multiplication and unit derived from the compositor and unitor respectively.
The conditions for $\mathcal{F}$ to be a lax functor then yield associativity and unit axioms for the monad.

Given two monads $(S,\gamma^S,\iota^S)$ and $(T,\gamma^T,\iota^T)$ on an object $X$, it is natural to ask if $TS$ has a natural monad structure. The answer in general is `no', but the question has an affirmative answer in the presence of a distributive law $\sigma\colon ST \to TS$.

\begin{definition}
  Let $(S,\gamma^S,\iota)$ and $(T,\gamma^T,\iota^T)$ be monads. A distributive law between them is a 2-morphism $\sigma\colon ST \to TS$ satisfying the following string diagrams. Here $\sigma$ is represented by a braiding of the red wire $S$ over the blue wire $T$.
\begin{equation*}
   \begin{tikzpicture}[scale=0.8,baseline={([yshift=-0.5ex]current bounding box.center)}]
    \begin{scope}[on layer=over]
    \path coordinate[dot, functorM, label=below:$\gamma^T$] (mu)
    +(0,1.5) coordinate[label=above:$T$] (d)
    +(-1,-1) coordinate (mbl)
    +(1,-1) coordinate (mbr);
    \path (d -| mbl) ++(-1.25,0) coordinate[label=above:$S$] (tl) ++(0,-1.25) coordinate (tl2);
    \path (mbr) ++(0,-1.5) coordinate[label=below:$T$] (br) ++(1.5,0) coordinate[label=below:$S$] (brr) ++(0,0.25) coordinate (brr2);
    \path (mbl) ++(0,-1.5) coordinate[label=below:$T$] (bl);
    \end{scope}
    \draw[functorM] (bl) -- (mbl) to[out=90, in=180] (mu.center) to[out=0, in=90] (mbr) -- (br)
                    (mu) -- (d);
    \draw[functorL, cross line] (tl) -- (tl2) to[out=270, in=90] (brr);
    \coordinate (cornerNW) at ($(tl) + (-0.5,0)$);
    \coordinate (cornerSE) at ($(brr) + (0.5,0)$);
    \draw (cornerNW) rectangle (cornerSE);
   \end{tikzpicture}
   \,=\,
   \begin{tikzpicture}[scale=0.8,baseline={([yshift=-0.5ex]current bounding box.center)}]
    \begin{scope}[on layer=over]
    \path coordinate[dot, functorM, label=below:$\gamma^T$] (mu)
    +(0,1.75) coordinate[label=above:$T$] (d)
    +(-1,-1) coordinate (mbl)
    +(1,-1) coordinate (mbr);
    \path (d -| mbl) ++(-1.25,0) coordinate[label=above:$S$] (tl) ++(0,-0.25) coordinate (tl2);
    \path (mbr) ++(0,-1.25) coordinate[label=below:$T$] (br) ++(1.5,0) coordinate[label=below:$S$] (brr) ++(0,1.4) coordinate (brr2);
    \path (mbl) ++(0,-1.25) coordinate[label=below:$T$] (bl);
    \end{scope}
    \draw[functorM] (bl) -- (mbl) to[out=90, in=180] (mu.center) to[out=0, in=90] (mbr) -- (br)
                    (mu) -- (d);
    \draw[functorL, cross line] (tl) to[out=270, in=90] (brr2) -- (brr);
    \coordinate (cornerNW) at ($(tl) + (-0.5,0)$);
    \coordinate (cornerSE) at ($(brr) + (0.5,0)$);
    \draw (cornerNW) rectangle (cornerSE);
   \end{tikzpicture}
  \end{equation*}
  
  \vspace{-3pt plus 1pt}
  \begin{equation*}
   \begin{tikzpicture}[scale=0.8,baseline={([yshift=-0.5ex]current bounding box.center)}]
    \begin{scope}[on layer=over]
    \path coordinate[dot, functorL, label=below:$\gamma^S$] (mu)
    +(0,1.5) coordinate[label=above:$S$] (d)
    +(1,-1) coordinate (mbl)
    +(-1,-1) coordinate (mbr);
    \path (d -| mbl) ++(1.25,0) coordinate[label=above:$T$] (tl) ++(0,-1.25) coordinate (tl2);
    \path (mbr) ++(0,-1.5) coordinate[label=below:$S$] (br) ++(-1.5,0) coordinate[label=below:$T$] (brr) ++(0,0.25) coordinate (brr2);
    \path (mbl) ++(0,-1.5) coordinate[label=below:$S$] (bl);
    \end{scope}
    \draw[functorM] (tl) -- (tl2) to[out=270, in=90] (brr);
    \draw[functorL, cross line] (bl) -- (mbl) to[out=90, in=0] (mu.center) to[out=180, in=90] (mbr) -- (br)
                                (mu) -- (d);
    \coordinate (cornerNW) at ($(tl) + (0.5,0)$);
    \coordinate (cornerSE) at ($(brr) + (-0.5,0)$);
    \draw (cornerNW) rectangle (cornerSE);
   \end{tikzpicture}
   \,=\,
   \begin{tikzpicture}[scale=0.8,baseline={([yshift=-0.5ex]current bounding box.center)}]
    \begin{scope}[on layer=over]
    \path coordinate[dot, functorL, label=below:$\gamma^S$] (mu)
    +(0,1.75) coordinate[label=above:$S$] (d)
    +(1,-1) coordinate (mbl)
    +(-1,-1) coordinate (mbr);
    \path (d -| mbl) ++(1.25,0) coordinate[label=above:$T$] (tl) ++(0,-0.25) coordinate (tl2);
    \path (mbr) ++(0,-1.25) coordinate[label=below:$S$] (br) ++(-1.5,0) coordinate[label=below:$T$] (brr) ++(0,1.4) coordinate (brr2);
    \path (mbl) ++(0,-1.25) coordinate[label=below:$S$] (bl);
    \end{scope}
    \draw[functorM] (tl) to[out=270, in=90] (brr2) -- (brr);
    \draw[functorL, cross line] (bl) -- (mbl) to[out=90, in=0] (mu.center) to[out=180, in=90] (mbr) -- (br)
                                (mu) -- (d);
    \coordinate (cornerNW) at ($(tl) + (0.5,0)$);
    \coordinate (cornerSE) at ($(brr) + (-0.5,0)$);
    \draw (cornerNW) rectangle (cornerSE);
   \end{tikzpicture}
  \end{equation*}

\vspace{-3pt plus 1pt}
\begin{equation*}
 \begin{tikzpicture}[scale=1,baseline={([yshift=-0.5ex]current bounding box.center)}]
  \begin{scope}[on layer=over]
  \path coordinate (mu)
  +(0,1.25) coordinate[label=above:$T$] (d)
  ++(0,-1.0) coordinate[dot, functorM, label=below:$\iota^T$] (mb) ++(0,-1.0) coordinate (b);
  \path (d) ++(-1.25,0) coordinate[label=above:$S$] (tl) ++(0,-0.0) coordinate (tl2);
  \path (b) ++ (1.25,0) coordinate[label=below:$S$] (brr) ++(0,0.0) coordinate (rr);
  \end{scope}
  \coordinate (cornerNW) at ($(tl) + (-0.5,0)$);
  \coordinate (cornerSE) at ($(brr) + (0.5,0)$);
  \draw[functorM] (d) -- (mu) -- (mb);
  \draw[functorL, cross line] (tl) -- (tl2) to[out=270, in=90] (rr) -- (brr);
  \draw (cornerNW) rectangle (cornerSE);
 \end{tikzpicture}
 \enspace=\enspace
 \begin{tikzpicture}[scale=1,baseline={([yshift=-0.5ex]current bounding box.center)}]
  \begin{scope}[on layer=over]
  \path coordinate (mu)
  +(0,0.25) coordinate[label=above:$T$] (d)
  ++(0,-0.75) coordinate[dot, functorM, label=below:{$\iota^T$}] (mb) ++(0,-2.25) coordinate (b);
  \path (d) ++(-1.25,0) coordinate[label=above:$S$] (tl) ++(0,-0.75) coordinate (tl2);
  \path (b) ++ (1.25,0) coordinate[label=below:$S$] (brr) ++(0,0.25) coordinate (rr);
  \end{scope}
  \coordinate (cornerNW) at ($(tl) + (-0.5,0)$);
  \coordinate (cornerSE) at ($(brr) + (0.5,0)$);
  \draw[functorM] (d) -- (mu) -- (mb);
  \draw[functorL, cross line] (tl) -- (tl2) to[out=270, in=90] (rr) -- (brr);
  \draw (cornerNW) rectangle (cornerSE);
 \end{tikzpicture}
\end{equation*}

\vspace{-3pt plus 1pt}
\begin{equation*}
 \begin{tikzpicture}[scale=1,baseline={([yshift=-0.5ex]current bounding box.center)}]
  \begin{scope}[on layer=over]
  \path coordinate (mu)
  +(0,1.25) coordinate[label=above:$S$] (d)
  ++(0,-1.0) coordinate[dot, functorL, label=below:$\iota^S$] (mb) ++(0,-1.0) coordinate (b);
  \path (d) ++(1.25,0) coordinate[label=above:$T$] (tl) ++(0,-0.0) coordinate (tl2);
  \path (b) ++ (-1.25,0) coordinate[label=below:$T$] (brr) ++(0,0.0) coordinate (rr);
  \end{scope}
  \coordinate (cornerNW) at ($(tl) + (0.5,0)$);
  \coordinate (cornerSE) at ($(brr) + (-0.5,0)$);
  \draw[functorM] (tl) -- (tl2) to[out=270, in=90] (rr) -- (brr);
  \draw[functorL, cross line] (d) -- (mu) -- (mb);
  \draw (cornerNW) rectangle (cornerSE);
 \end{tikzpicture}
 \enspace=\enspace
 \begin{tikzpicture}[scale=1,baseline={([yshift=-0.5ex]current bounding box.center)}]
  \begin{scope}[on layer=over]
  \path coordinate (mu)
  +(0,0.25) coordinate[label=above:$S$] (d)
  ++(0,-0.75) coordinate[dot, functorL, label=below:{$\iota^S$}] (mb) ++(0,-2.25) coordinate (b);
  \path (d) ++(1.25,0) coordinate[label=above:$T$] (tl) ++(0,-0.75) coordinate (tl2);
  \path (b) ++ (-1.25,0) coordinate[label=below:$T$] (brr) ++(0,0.25) coordinate (rr);
  \end{scope}
  \coordinate (cornerNW) at ($(tl) + (0.5,0)$);
  \coordinate (cornerSE) at ($(brr) + (-0.5,0)$);
  \draw[functorM] (tl) -- (tl2) to[out=270, in=90] (rr) -- (brr);
  \draw[functorL, cross line] (d) -- (mu) -- (mb);
  \draw (cornerNW) rectangle (cornerSE);
 \end{tikzpicture}
\end{equation*}

\end{definition}

Given a distributive law $\sigma\colon ST \to TS$, we have that $(TS, (\gamma^T \ast \gamma^S) (T \sigma S), \iota^T \ast \iota^S)$ is a monad.

\section{The lax bifunctor theorem}

In order to generalise the classical bifunctor theorem to the setting of lax functors, the equality $L_{C_2}(f)M_{B_1}(g) = M_{B_2}(g)L_{C_1}(f)$ in \Cref{thm:Mac} should be replaced by some suitable 2-morphism $\sigma_{f,g}\colon L_{C_2}(f)M_{B_1}(g) \to M_{B_2}(g)L_{C_1}(f)$. While it often makes sense to replace equalities with isomorphisms, in the lax setting it is more natural not to ask for the morphism to be invertible. The construction will only succeed when each $\sigma$ satisfies certain coherence conditions, which we describe below.
Our terminology is chosen by analogy to the distributive laws of monads, which have similar axioms.

\begin{definition}
Given a family of lax functors $L_C\colon \B \to \D$ indexed by objects of $\C$ and a family of lax functors $M_B\colon \C \to \D$ indexed by objects of $\B$ such that $L_C(B) = M_B(C)$, we say a family of 2-morphisms $\sigma_{f,g}\colon L_{C_2}(f) M_{B_1}(g) \to M_{B_2}(g)L_{C_1}(f)$ indexed by two 1-morphisms $f\colon B_1 \to B_2$ in $\B$ and $g\colon C_1 \to C_2$ in $\C$ is called a \emph{distributive law of lax functors} if it satisfies the following conditions represented in string diagrams below, where morphisms to which an `$L$' lax functor has been applied are written in red, morphisms to which an `$M$' lax functor has been applied are written in blue and $\sigma$ is denoted by a red line on the right crossing over a blue line on the left.
  
  \vspace{-3pt plus 1pt}
  \begin{equation}\tag{D1}\label{eq:distcomp1}
   \begin{tikzpicture}[scale=1.0,baseline={([yshift=-0.5ex]current bounding box.center)}]
    \begin{scope}[on layer=over]
    \path coordinate[dot, functorM, label=below:$\gamma^{B_2}_{g_2,g_1}$] (mu)
    +(0,1.5) coordinate[label=above:$M_{B_2}(g_2g_1)$] (d)
    +(-1,-1) coordinate (mbl)
    +(1,-1) coordinate (mbr);
    \path (d -| mbl) ++(-1.25,0) coordinate[label=above:$L_{C_1}(f_1)$] (tl) ++(0,-1.25) coordinate (tl2);
    \path (mbr) ++(0,-1.5) coordinate[label=below:$M_{B_1}(g_2)$] (br) ++(1.5,0) coordinate[label=below:$L_{C_3}(f_1)$] (brr) ++(0,0.25) coordinate (brr2);
    \path (mbl) ++(0,-1.5) coordinate[label=below:$M_{B_1}(g_1)$] (bl);
    \end{scope}
    \draw[functorM] (bl) -- (mbl) to[out=90, in=180] (mu.center) to[out=0, in=90] (mbr) -- (br)
                    (mu) -- (d);
    \draw[functorL, cross line] (tl) -- (tl2) to[out=270, in=90] (brr);
    \coordinate (cornerNW) at ($(tl) + (-0.5,0)$);
    \coordinate (cornerSE) at ($(brr) + (0.5,0)$);
    \draw (cornerNW) rectangle (cornerSE);
   \end{tikzpicture}
   \,=\,
   \begin{tikzpicture}[scale=1.0,baseline={([yshift=-0.5ex]current bounding box.center)}]
    \begin{scope}[on layer=over]
    \path coordinate[dot, functorM, label=below:$\gamma^{B_1}_{g_2,g_1}$] (mu)
    +(0,1.75) coordinate[label=above:$M_{B_2}(g_2g_1)$] (d)
    +(-1,-1) coordinate (mbl)
    +(1,-1) coordinate (mbr);
    \path (d -| mbl) ++(-1.25,0) coordinate[label=above:$L_{C_1}(f_1)$] (tl) ++(0,-0.25) coordinate (tl2);
    \path (mbr) ++(0,-1.25) coordinate[label=below:$M_{B_1}(g_2)$] (br) ++(1.5,0) coordinate[label=below:$L_{C_3}(f_1)$] (brr) ++(0,1.4) coordinate (brr2);
    \path (mbl) ++(0,-1.25) coordinate[label=below:$M_{B_1}(g_1)$] (bl);
    \end{scope}
    \draw[functorM] (bl) -- (mbl) to[out=90, in=180] (mu.center) to[out=0, in=90] (mbr) -- (br)
                    (mu) -- (d);
    \draw[functorL, cross line] (tl) to[out=270, in=90] (brr2) -- (brr);
    \coordinate (cornerNW) at ($(tl) + (-0.5,0)$);
    \coordinate (cornerSE) at ($(brr) + (0.5,0)$);
    \draw (cornerNW) rectangle (cornerSE);
   \end{tikzpicture}
  \end{equation}
  
  \vspace{-3pt plus 1pt}
  \begin{equation}\tag{D2}\label{eq:distcomp2}
   \begin{tikzpicture}[scale=1.0,baseline={([yshift=-0.5ex]current bounding box.center)}]
    \begin{scope}[on layer=over]
    \path coordinate[dot, functorL, label=below:$\gamma^{C_1}_{f_2,f_1}$] (mu)
    +(0,1.5) coordinate[label=above:$L_{C_1}(f_2f_1)$] (d)
    +(1,-1) coordinate (mbl)
    +(-1,-1) coordinate (mbr);
    \path (d -| mbl) ++(1.25,0) coordinate[label=above:$M_{B_3}(g_1)$] (tl) ++(0,-1.25) coordinate (tl2);
    \path (mbr) ++(0,-1.5) coordinate[label=below:$L_{C_2}(f_1)$] (br) ++(-1.5,0) coordinate[label=below:$M_{B_1}(g_1)$] (brr) ++(0,0.25) coordinate (brr2);
    \path (mbl) ++(0,-1.5) coordinate[label=below:$L_{C_2}(f_2)$] (bl);
    \end{scope}
    \draw[functorM] (tl) -- (tl2) to[out=270, in=90] (brr);
    \draw[functorL, cross line] (bl) -- (mbl) to[out=90, in=0] (mu.center) to[out=180, in=90] (mbr) -- (br)
                                (mu) -- (d);
    \coordinate (cornerNW) at ($(tl) + (0.5,0)$);
    \coordinate (cornerSE) at ($(brr) + (-0.5,0)$);
    \draw (cornerNW) rectangle (cornerSE);
   \end{tikzpicture}
   \,=\,
   \begin{tikzpicture}[scale=1.0,baseline={([yshift=-0.5ex]current bounding box.center)}]
    \begin{scope}[on layer=over]
    \path coordinate[dot, functorL, label=below:$\gamma^{C_2}_{f_2,f_1}$] (mu)
    +(0,1.75) coordinate[label=above:$L_{C_1}(f_2f_1)$] (d)
    +(1,-1) coordinate (mbl)
    +(-1,-1) coordinate (mbr);
    \path (d -| mbl) ++(1.25,0) coordinate[label=above:$M_{B_3}(g_1)$] (tl) ++(0,-0.25) coordinate (tl2);
    \path (mbr) ++(0,-1.25) coordinate[label=below:$L_{C_2}(f_1)$] (br) ++(-1.5,0) coordinate[label=below:$M_{B_1}(g_1)$] (brr) ++(0,1.4) coordinate (brr2);
    \path (mbl) ++(0,-1.25) coordinate[label=below:$L_{C_2}(f_2)$] (bl);
    \end{scope}
    \draw[functorM] (tl) to[out=270, in=90] (brr2) -- (brr);
    \draw[functorL, cross line] (bl) -- (mbl) to[out=90, in=0] (mu.center) to[out=180, in=90] (mbr) -- (br)
                                (mu) -- (d);
    \coordinate (cornerNW) at ($(tl) + (0.5,0)$);
    \coordinate (cornerSE) at ($(brr) + (-0.5,0)$);
    \draw (cornerNW) rectangle (cornerSE);
   \end{tikzpicture}
  \end{equation}

\vspace{-3pt plus 1pt}
\begin{equation}\tag{D3}\label{eq:distunit1}
 \begin{tikzpicture}[scale=1.25,baseline={([yshift=-0.5ex]current bounding box.center)}]
  \begin{scope}[on layer=over]
  \path coordinate (mu)
  +(0,1.25) coordinate[label=above:$M_{B_2}(\id_C)$] (d)
  ++(0,-1.0) coordinate[dot, functorM, label=below:$\iota^{B_1}_C$] (mb) ++(0,-1.0) coordinate (b);
  \path (d) ++(-1.25,0) coordinate[label=above:$L_{C}(f)$] (tl) ++(0,-0.0) coordinate (tl2);
  \path (b) ++ (1.25,0) coordinate[label=below:$L_{C}(f)$] (brr) ++(0,0.0) coordinate (rr);
  \end{scope}
  \coordinate (cornerNW) at ($(tl) + (-0.5,0)$);
  \coordinate (cornerSE) at ($(brr) + (0.5,0)$);
  \draw[functorM] (d) -- (mu) -- (mb);
  \draw[functorL, cross line] (tl) -- (tl2) to[out=270, in=90] (rr) -- (brr);
  \draw (cornerNW) rectangle (cornerSE);
 \end{tikzpicture}
 \enspace=\enspace
 \begin{tikzpicture}[scale=1.25,baseline={([yshift=-0.5ex]current bounding box.center)}]
  \begin{scope}[on layer=over]
  \path coordinate (mu)
  +(0,0.25) coordinate[label=above:$M_{B_2}(\id_C)$] (d)
  ++(0,-0.75) coordinate[dot, functorM, label=below:{$\iota^{B_2}_C$}] (mb) ++(0,-2.25) coordinate (b);
  \path (d) ++(-1.25,0) coordinate[label=above:$L_{C}(f)$] (tl) ++(0,-0.75) coordinate (tl2);
  \path (b) ++ (1.25,0) coordinate[label=below:$L_{C}(f)$] (brr) ++(0,0.25) coordinate (rr);
  \end{scope}
  \coordinate (cornerNW) at ($(tl) + (-0.5,0)$);
  \coordinate (cornerSE) at ($(brr) + (0.5,0)$);
  \draw[functorM] (d) -- (mu) -- (mb);
  \draw[functorL, cross line] (tl) -- (tl2) to[out=270, in=90] (rr) -- (brr);
  \draw (cornerNW) rectangle (cornerSE);
 \end{tikzpicture}
\end{equation}

\vspace{-3pt plus 1pt}
\begin{equation}\tag{D4}\label{eq:distunit2}
 \begin{tikzpicture}[scale=1.25,baseline={([yshift=-0.5ex]current bounding box.center)}]
  \begin{scope}[on layer=over]
  \path coordinate (mu)
  +(0,1.25) coordinate[label=above:$L_{C_1}(\id_B)$] (d)
  ++(0,-1.0) coordinate[dot, functorL, label=below:$\iota^{C_2}_B$] (mb) ++(0,-1.0) coordinate (b);
  \path (d) ++(1.25,0) coordinate[label=above:$M_{B}(g)$] (tl) ++(0,-0.0) coordinate (tl2);
  \path (b) ++ (-1.25,0) coordinate[label=below:$M_{B}(g)$] (brr) ++(0,0.0) coordinate (rr);
  \end{scope}
  \coordinate (cornerNW) at ($(tl) + (0.5,0)$);
  \coordinate (cornerSE) at ($(brr) + (-0.5,0)$);
  \draw[functorM] (tl) -- (tl2) to[out=270, in=90] (rr) -- (brr);
  \draw[functorL, cross line] (d) -- (mu) -- (mb);
  \draw (cornerNW) rectangle (cornerSE);
 \end{tikzpicture}
 \enspace=\enspace
 \begin{tikzpicture}[scale=1.25,baseline={([yshift=-0.5ex]current bounding box.center)}]
  \begin{scope}[on layer=over]
  \path coordinate (mu)
  +(0,0.25) coordinate[label=above:$L_{C_1}(\id_B)$] (d)
  ++(0,-0.75) coordinate[dot, functorL, label=below:{$\iota^{C_1}_B$}] (mb) ++(0,-2.25) coordinate (b);
  \path (d) ++(1.25,0) coordinate[label=above:$M_{B}(g)$] (tl) ++(0,-0.75) coordinate (tl2);
  \path (b) ++ (-1.25,0) coordinate[label=below:$M_{B}(g)$] (brr) ++(0,0.25) coordinate (rr);
  \end{scope}
  \coordinate (cornerNW) at ($(tl) + (0.5,0)$);
  \coordinate (cornerSE) at ($(brr) + (-0.5,0)$);
  \draw[functorM] (tl) -- (tl2) to[out=270, in=90] (rr) -- (brr);
  \draw[functorL, cross line] (d) -- (mu) -- (mb);
  \draw (cornerNW) rectangle (cornerSE);
 \end{tikzpicture}
\end{equation}

\vspace{-3pt plus 1pt}
\begin{equation}\tag{D5}\label{eq:distnat1}
 \begin{tikzpicture}[scale=1.25,baseline={([yshift=-0.5ex]current bounding box.center)}]
  \begin{scope}[on layer=over]
  \path coordinate (mu)
  +(0,0.75) coordinate[label=above:$M_{B_2}(g')$] (d)
  ++(0,-1.75) coordinate[dot, functorM, label=left:$M_{B_1}(\beta)$] (mb) ++(0,-0.75) coordinate[label=below:$M_{B_1}(g)$] (b);
  \path (d) ++(-1.25,0) coordinate[label=above:$L_{C_1}(f)$] (tl) ++(0,-0.0) coordinate (tl2);
  \path (b) ++ (1.25,0) coordinate[label=below:$L_{C_2}(f)$] (brr) ++(0,0.0) coordinate (rr);
  \end{scope}
  \coordinate (cornerNW) at ($(tl) + (-0.5,0)$);
  \coordinate (cornerSE) at ($(brr) + (0.5,0)$);
  \draw[functorM] (d) -- (mu) -- (mb) -- (b);
  \draw[functorL, cross line] (tl) -- (tl2) to[out=270, in=90] (rr) -- (brr);
  \draw (cornerNW) rectangle (cornerSE);
 \end{tikzpicture}
 \enspace=\enspace
 \begin{tikzpicture}[scale=1.25,baseline={([yshift=-0.5ex]current bounding box.center)}]
  \begin{scope}[on layer=over]
  \path coordinate[dot, functorM, label=right:$M_{B_2}(\beta)$] (mu)
  +(0,0.75) coordinate[label=above:$M_{B_2}(g')$] (d)
  ++(0,-1.75) coordinate (mb) ++(0,-0.75) coordinate[label=below:$M_{B_1}(g)$] (b);
  \path (d) ++(-1.25,0) coordinate[label=above:$L_{C_1}(f)$] (tl) ++(0,-0.0) coordinate (tl2);
  \path (b) ++ (1.25,0) coordinate[label=below:$L_{C_2}(f)$] (brr) ++(0,0.0) coordinate (rr);
  \end{scope}
  \coordinate (cornerNW) at ($(tl) + (-0.5,0)$);
  \coordinate (cornerSE) at ($(brr) + (0.5,0)$);
  \draw[functorM] (d) -- (mu) -- (mb) -- (b);
  \draw[functorL, cross line] (tl) -- (tl2) to[out=270, in=90] (rr) -- (brr);
  \draw (cornerNW) rectangle (cornerSE);
 \end{tikzpicture}
\end{equation}

\vspace{-3pt plus 1pt}
\begin{equation}\tag{D6}\label{eq:distnat2}
 \begin{tikzpicture}[scale=1.25,baseline={([yshift=-0.5ex]current bounding box.center)}]
  \begin{scope}[on layer=over]
  \path coordinate (mu)
  +(0,0.75) coordinate[label=above:$L_{C_1}(f')$] (d)
  ++(0,-1.75) coordinate[dot, functorL, label=right:$L_{C_2}(\alpha)$] (mb) ++(0,-0.75) coordinate[label=below:$L_{C_2}(f)$] (b);
  \path (d) ++(1.25,0) coordinate[label=above:$M_{B_2}(g)$] (tl) ++(0,-0.0) coordinate (tl2);
  \path (b) ++ (-1.25,0) coordinate[label=below:$M_{B_1}(g)$] (brr) ++(0,0.0) coordinate (rr);
  \end{scope}
  \coordinate (cornerNW) at ($(tl) + (0.5,0)$);
  \coordinate (cornerSE) at ($(brr) + (-0.5,0)$);
  \draw[functorM] (tl) -- (tl2) to[out=270, in=90] (rr) -- (brr);
  \draw[functorL, cross line] (d) -- (mu) -- (mb) -- (b);
  \draw (cornerNW) rectangle (cornerSE);
 \end{tikzpicture}
 \enspace=\enspace
 \begin{tikzpicture}[scale=1.25,baseline={([yshift=-0.5ex]current bounding box.center)}]
  \begin{scope}[on layer=over]
  \path coordinate[dot, functorL, label=left:$L_{C_1}(\alpha)$] (mu)
  +(0,0.75) coordinate[label=above:$L_{C_1}(f')$] (d)
  ++(0,-1.75) coordinate (mb) ++(0,-0.75) coordinate[label=below:$L_{C_2}(f)$] (b);
  \path (d) ++(1.25,0) coordinate[label=above:$M_{B_2}(g)$] (tl) ++(0,-0.0) coordinate (tl2);
  \path (b) ++ (-1.25,0) coordinate[label=below:$M_{B_1}(g)$] (brr) ++(0,0.0) coordinate (rr);
  \end{scope}
  \coordinate (cornerNW) at ($(tl) + (0.5,0)$);
  \coordinate (cornerSE) at ($(brr) + (-0.5,0)$);
  \draw[functorM] (tl) -- (tl2) to[out=270, in=90] (rr) -- (brr);
  \draw[functorL, cross line] (d) -- (mu) -- (mb) -- (b);
  \draw (cornerNW) rectangle (cornerSE);
 \end{tikzpicture}
\end{equation}

These conditions all amount to asking for the `crossings' to pass through the structure of the lax functors.

The collection of all distributive laws between all such families of lax functors with domains $\B$ and $\C$ and codomain $\D$ will be denoted by $\Dist(\B,\C,\D)$.
\end{definition}

\begin{theorem}[Lax bifunctor theorem]\label{thm:lax_bifunctor}
 Let $\sigma \in \Dist(\B,\C,\D)$ be a distributive law between families of lax functors $(L_C,\gamma^C,\iota^C)$ and $(M_B,\gamma^B,\iota^B)$.
 
We may construct a lax bifunctor $P\colon \B \times \C \to \D$ with $P(B,C) = L_C(B) = M_B(C)$ on objects, $P(f,g) = M_{B_2}(g)L_{C_1}(f)$ on 1-morphisms $f\colon B_1 \to B_2$ and $g\colon C_1 \to C_2$, and $P(\alpha,\beta)= M_{B_2}(\beta) \ast L_{C_1}(\alpha)$ on 2-morphisms, and with the unitor $\iota_{B,C}\colon \id_{S(B,C)} \to P(\id_B,\id_C)$ given by $\iota^B_C \ast \iota^C_B$, and the compositor $\gamma_{(f',g'),(f,g)}\colon P(f',g')P(f,g) \to P(f'f,g'g)$ for $f'\colon B_2 \to B_3$ and $g'\colon C_2 \to C_3$ given by the following string diagram.
  \\ \vspace{-3pt plus 1pt}
  \begin{center}
   \begin{tikzpicture}[scale=1.5,baseline={([yshift=-0.5ex]current bounding box.center)}]
    \begin{scope}[on layer=over]
    \path coordinate[dot, functorM] (mu)
    +(0,0.625) coordinate[label={above:$M_{B_3}(g'g)$}] (d)
    +(-0.75,-0.75) coordinate (mbl)
    +(0.75,-0.75) coordinate (mbr);
    \path (mbr) ++(0,-1.5) coordinate[label={below:$M_{B_3}(g')$}] (br);
    \path (mbl) ++(-1.0,-1.0) coordinate (bl2) ++(0,-0.5) coordinate[label={below:$M_{B_2}(g)$}] (bl);
    \path (mu) ++(-2.5,0) coordinate[dot, functorL] (muL)
    +(0,0.625) coordinate[label={above:$L_{C_1}(f'f)$}] (dL)
    +(-0.75,-0.75) coordinate (mblL)
    +(0.75,-0.75) coordinate (mbrL);
    \path (mbrL) ++(1.0,-1.0) coordinate (brL2) ++(0,-0.5) coordinate[label={below:$L_{C_2}(f')$}] (brL);
    \path (mblL) ++(0,-1.5) coordinate[label={below:$L_{C_1}(f)$}] (blL);
    \end{scope}
    \draw[functorM] (bl) -- (bl2) to[out=90, in=-90] (mbl) to[out=90, in=180] (mu.center) to[out=0, in=90] (mbr) -- (br)
                    (mu) -- (d);
    \draw[functorL, cross line] (blL) -- (mblL) to[out=90, in=180] (muL.center) to[out=0, in=90] (mbrL) to[out=-90, in=90] (brL2) -- (brL)
                                (muL) -- (dL);
    \coordinate (tl) at (dL -| blL);
    \coordinate (cornerNW) at ($(tl) + (-0.5,0)$);
    \coordinate (cornerSE) at ($(br) + (0.5,0)$);
    \draw (cornerNW) rectangle (cornerSE);
   \end{tikzpicture}
  \end{center}
 
 Furthermore, $P$ is related to the $L$ and $M$ families by canonical oplax transformations $\kappa^B\colon M_B \to P(B,-)$ and $\kappa^C\colon L_C \to P(-,C)$
  whose 1-morphism components are identities (that is, these are \emph{icons} as in \cite{lack2010icons}) and where $\kappa^B_g = M_B(g) \iota^{C_1}_B$ and $\kappa^C_f = \iota^{B_2}_C L_C(f)$ for $g\colon C_1 \to C_2$ and $f\colon B_1 \to B_2$.
 \end{theorem}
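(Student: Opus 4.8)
The plan is to check directly that the stated data assembles into a lax functor $P$, and then that the stated data for $\kappa^B$ and $\kappa^C$ defines the claimed oplax transformations. Every verification will be an identity of string diagrams in $\D$, and in each case it is obtained by feeding one of the axioms of the families $(L_C)$, $(M_B)$ through one or two of the conditions \eqref{eq:distcomp1}--\eqref{eq:distnat2}, which is exactly what it means for the $\sigma$-crossings to ``pass through'' the lax structure. First I would dispose of well-definedness: the composite $M_{B_2}(g)L_{C_1}(f)$ makes sense precisely because $L_{C_1}(B_2)=M_{B_2}(C_1)$, and $P(\alpha,\beta)=M_{B_2}(\beta)\ast L_{C_1}(\alpha)$ is a functor on $\Hom_\B(B_1,B_2)\times\Hom_\C(C_1,C_2)$ by functoriality of $M_{B_2}$ and $L_{C_1}$ on hom-categories together with the interchange law in $\D$.

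Next come the naturality of the compositor and the unit axioms. Writing $\gamma_{(f',g'),(f,g)}$ as $(\gamma^{B_3}_{g',g}\ast\gamma^{C_1}_{f',f})\circ(M_{B_3}(g')\ast\sigma_{f',g}\ast L_{C_1}(f))$, one transports a whiskered $2$-morphism $P(\alpha',\beta')\ast P(\alpha,\beta)$ through it: the outer factors $M_{B_3}(\beta')$ and $L_{C_1}(\alpha)$ are absorbed by the naturality of $\gamma^{B_3}$ and of $\gamma^{C_1}$ (axiom (1) for $M_{B_3}$ and $L_{C_1}$), while \eqref{eq:distnat1} and \eqref{eq:distnat2} are precisely what slide the inner factors $M_{B_2}(\beta)$ and $L_{C_2}(\alpha')$ across the central crossing $\sigma_{f',g}$. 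For the unit axioms, whiskering $\iota_{B,C}=\iota^B_C\ast\iota^C_B$ into $\gamma_{(f,g),(\id,\id)}$ produces a crossing ($\sigma_{f,\id_C}$ for one of the axioms, $\sigma_{\id_B,g}$ for the other) composed with a unitor, which \eqref{eq:distunit1} respectively \eqref{eq:distunit2} collapses to a unitor of the other family, and the left and right unit axioms of $M_{B_2}$ and $L_{C_1}$ finish the job.

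The hard part will be the associativity axiom. Given a composable triple $(f,g),(f',g'),(f'',g'')$, both bracketings of the triple composite $P(f'',g'')P(f',g')P(f,g)$ unfold into string diagrams built from two $\gamma^B$-forks, two $\gamma^C$-forks and two $\sigma$-crossings, and the two sides differ by which crossing acts on a composite $1$-morphism. The strategy is to use \eqref{eq:distcomp1} to split the crossing of an $L$-strand past a $\gamma^B$-merge of two $M$-strands into two elementary crossings, and dually \eqref{eq:distcomp2} to split the crossing of an $M$-strand past a $\gamma^C$-merge; after this both sides are expressed through the same three elementary crossings --- those involving the pairs $(f',g)$, $(f'',g')$, $(f'',g)$ --- together with $\gamma^B$- and $\gamma^C$-forks, which are then rebracketed using the associativity of $\gamma^B$ and of $\gamma^C$. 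The point that must be checked carefully --- and the reason no hexagon-type hypothesis on $\sigma$ is needed --- is that these three elementary crossings never have to be braided over each other: the two of the three that act on disjoint strands commute by the interchange law, while the third occurs after both of them in each bracketing, so both sides yield the same configuration of crossings and forks. The bookkeeping here is by far the most laborious part; once the diagrams are laid out, each move is forced.

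Finally, for the icons: since $P(B,C)=M_B(C)$ and $P(B,C)=L_C(B)$, the lax functors $M_B$ and $P(B,-)$ agree on objects, as do $L_C$ and $P(-,C)$ --- here $P(B,-)$ and $P(-,C)$ are lax functors because they are $P$ precomposed with the strict $2$-functors $C\mapsto(B,C)$ and $B\mapsto(B,C)$ --- so the identity-on-objects data really does give icons in the sense of \cite{lack2010icons}. With $\kappa^B_g=M_B(g)\ast\iota^{C_1}_B\colon M_B(g)\to M_B(g)L_{C_1}(\id_B)=P(B,g)$, and symmetrically $\kappa^C_f=\iota^{B_2}_C\ast L_C(f)$, the naturality and identity axioms of an oplax transformation reduce to functoriality of $M_B$ and the interchange law, with no input from $\sigma$; the composition axiom is the one place the distributive law is used, where unpacking the compositor of $P(B,-)$ exposes a crossing $\sigma_{\id_B,g}$ which, combined with an adjacent unitor, \eqref{eq:distunit2} rewrites as $M_B(g)\ast\iota^{C_1}_B$, after which the identity $\gamma^{C_1}_{\id_B,\id_B}\circ(\iota^{C_1}_B\ast\iota^{C_1}_B)=\iota^{C_1}_B$ (a consequence of the unit axiom for $L_{C_1}$) and the compositor axiom of $M_B$ yield the claim; for $\kappa^C$ one argues symmetrically with \eqref{eq:distunit1}.
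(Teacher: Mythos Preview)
Your proposal is correct and follows essentially the same approach as the paper's proof: both verify functoriality on 2-cells via interchange, establish naturality of the compositor using \eqref{eq:distnat1}--\eqref{eq:distnat2} together with naturality of $\gamma^B,\gamma^C$, prove associativity by using \eqref{eq:distcomp1} and \eqref{eq:distcomp2} to reduce to the associativity of $\gamma^B$ and $\gamma^C$, handle the unit axioms via \eqref{eq:distunit1}--\eqref{eq:distunit2} and the unit laws of $L$ and $M$, and finally check the icon axioms for $\kappa^B,\kappa^C$ using \eqref{eq:distunit2} (respectively \eqref{eq:distunit1}) for the compositor condition. Your explicit remark that the three elementary crossings $\sigma_{f',g},\sigma_{f'',g'},\sigma_{f'',g}$ never need to be braided over one another --- because the first two act on disjoint strands and the third comes last in both bracketings --- is a helpful clarification the paper leaves implicit, but it does not constitute a different method.
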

 
 \begin{proof}
 We start by establishing that $P$ is `functorial on 2-morphisms'.
  From the definition of $P$ we have $P(\alpha'\alpha,\beta'\beta) = M_{B_2}(\beta'\beta)\ast L_{C_1}(\alpha'\alpha)$. Since the lax functors $M_{B_2}$ and $L_{C_1}$ are functorial on 2-morphisms, this is equal to $M_{B_2}(\beta')M_{B_2}(\beta) \ast L_{C_1}(\alpha')L_{C_1}(\alpha)$ and by the interchange law this is the same as $(M_{B_2}(\beta')\ast L_{C_1}(\alpha')) \circ (M_{B_2}(\beta) \ast L_{C_1}(\alpha)) = P(\alpha',\beta')P(\alpha,\beta)$. Moreover, $P(\id_f,\id_g)=M_{B_2}(\id_g) \ast L_{C_1}(\id_f)=\id_{M_{B_2}(g)} \ast \id_{L_{C_1}(f)} = \id_{P(f,g)}$, which proves the claim.
  
  Next we show that naturality condition for $P$. This follows from conditions \ref{eq:distnat1} and \ref{eq:distnat2} and the naturality conditions of the $L$ and $M$ lax functors as shown in the string diagrams below.
  \\ \vspace{-3pt plus 1pt}
  \begin{align*}
   \begin{tikzpicture}[scale=0.9,baseline={([yshift=-0.5ex]current bounding box.center)}]
    \begin{scope}[on layer=over]
    \path coordinate[dot, functorM] (mu)
    +(0,1.0) coordinate (d)
    +(-0.75,-0.75) coordinate (mbl)
    +(0.75,-0.75) coordinate (mbr);
    \path (mbr) ++(0,-1.5) coordinate[dot, functorM] (blLa) ++(0,-0.5) coordinate (br);
    \path (mbl) ++(-1.0,-1.0) coordinate (bl2) ++(0,-0.5) coordinate[dot, functorM] (bla) ++(0,-0.5) coordinate (bl);
    \path (mu) ++(-2.5,0) coordinate[dot, functorL] (muL)
    +(0,1.0) coordinate (dL)
    +(-0.75,-0.75) coordinate (mblL)
    +(0.75,-0.75) coordinate (mbrL);
    \path (mbrL) ++(1.0,-1.0) coordinate (brL2) ++(0,-0.5) coordinate[dot, functorL] (brLa) ++(0,-0.5) coordinate (brL);
    \path (mblL) ++(0,-1.5) coordinate[dot, functorL] (blLa) ++(0,-0.5) coordinate (blL);
    \end{scope}
    \draw[functorM] (bl) -- (bl2) to[out=90, in=-90] (mbl) to[out=90, in=180] (mu.center) to[out=0, in=90] (mbr) -- (br)
                    (mu) -- (d);
    \draw[functorL, cross line] (blL) -- (mblL) to[out=90, in=180] (muL.center) to[out=0, in=90] (mbrL) to[out=-90, in=90] (brL2) -- (brL)
                                (muL) -- (dL);
    \coordinate (tl) at (dL -| blL);
    \coordinate (cornerNW) at ($(tl) + (-0.5,0)$);
    \coordinate (cornerSE) at ($(br) + (0.5,0)$);
    \draw (cornerNW) rectangle (cornerSE);
   \end{tikzpicture}
   \enspace=\enspace
   \begin{tikzpicture}[scale=0.9,baseline={([yshift=-0.5ex]current bounding box.center)}]
    \begin{scope}[on layer=over]
    \path coordinate[dot, functorM] (mu)
    +(0,1.0) coordinate (d)
    +(-0.75,-0.75) coordinate[dot, functorM] (mbl)
    +(0.75,-0.75) coordinate[dot, functorM] (mbr);
    \path (mbr) ++(0,-1.5) coordinate (blLa) ++(0,-0.5) coordinate (br);
    \path (mbl) ++(-1.0,-1.0) coordinate (bl2) ++(0,-0.5) coordinate (bla) ++(0,-0.5) coordinate (bl);
    \path (mu) ++(-2.5,0) coordinate[dot, functorL] (muL)
    +(0,1.0) coordinate (dL)
    +(-0.75,-0.75) coordinate[dot, functorL] (mblL)
    +(0.75,-0.75) coordinate[dot, functorL] (mbrL);
    \path (mbrL) ++(1.0,-1.0) coordinate (brL2) ++(0,-0.5) coordinate (brLa) ++(0,-0.5) coordinate (brL);
    \path (mblL) ++(0,-1.5) coordinate (blLa) ++(0,-0.5) coordinate (blL);
    \end{scope}
    \draw[functorM] (bl) -- (bl2) to[out=90, in=-90] (mbl) to[out=90, in=180] (mu.center) to[out=0, in=90] (mbr) -- (br)
                    (mu) -- (d);
    \draw[functorL, cross line] (blL) -- (mblL) to[out=90, in=180] (muL.center) to[out=0, in=90] (mbrL) to[out=-90, in=90] (brL2) -- (brL)
                                (muL) -- (dL);
    \coordinate (tl) at (dL -| blL);
    \coordinate (cornerNW) at ($(tl) + (-0.5,0)$);
    \coordinate (cornerSE) at ($(br) + (0.5,0)$);
    \draw (cornerNW) rectangle (cornerSE);
   \end{tikzpicture}
   \enspace=\enspace
   \begin{tikzpicture}[scale=0.9,baseline={([yshift=-0.5ex]current bounding box.center)}]
    \begin{scope}[on layer=over]
    \path coordinate[dot, functorM] (mu)
    +(-0.75,-0.75) coordinate (mbl)
    +(0.75,-0.75) coordinate (mbr);
    \path (mu) ++(0,0.5) coordinate[dot, functorM] (d2) ++(0,0.5) coordinate (d);
    \path (mbr) ++(0,-1.5) coordinate (blLa) ++(0,-0.5) coordinate (br);
    \path (mbl) ++(-1.0,-1.0) coordinate (bl2) ++(0,-0.5) coordinate (bla) ++(0,-0.5) coordinate (bl);
    \path (mu) ++(-2.5,0) coordinate[dot, functorL] (muL)
    +(-0.75,-0.75) coordinate (mblL)
    +(0.75,-0.75) coordinate (mbrL);
    \path (muL) ++(0,0.5) coordinate[dot, functorL] (dL2) ++(0,0.5) coordinate (dL);
    \path (mbrL) ++(1.0,-1.0) coordinate (brL2) ++(0,-0.5) coordinate (brLa) ++(0,-0.5) coordinate (brL);
    \path (mblL) ++(0,-1.5) coordinate (blLa) ++(0,-0.5) coordinate (blL);
    \end{scope}
    \draw[functorM] (bl) -- (bl2) to[out=90, in=-90] (mbl) to[out=90, in=180] (mu.center) to[out=0, in=90] (mbr) -- (br)
                    (mu) -- (d);
    \draw[functorL, cross line] (blL) -- (mblL) to[out=90, in=180] (muL.center) to[out=0, in=90] (mbrL) to[out=-90, in=90] (brL2) -- (brL)
                                (muL) -- (dL);
    \coordinate (tl) at (dL -| blL);
    \coordinate (cornerNW) at ($(tl) + (-0.5,0)$);
    \coordinate (cornerSE) at ($(br) + (0.5,0)$);
    \draw (cornerNW) rectangle (cornerSE);
   \end{tikzpicture}
  \end{align*}
  
  To see that $\gamma$ satisfies the associativity laws consider the following string diagrams. 
  \\ \vspace{-3pt plus 1pt}
  \begingroup
  \allowdisplaybreaks
  \begin{align*} 
   \begin{tikzpicture}[scale=0.75,baseline={([yshift=-0.5ex]current bounding box.center)}]
    \begin{scope}[on layer=over]
    \path coordinate[dot, functorM] (mu)
    +(0,0.5) coordinate (d)
    +(-0.75,-0.75) coordinate (mbl)
    +(0.75,-0.75) coordinate (mbr);
    \path (mbr) ++(0,-1.5) coordinate (br);
    \path (mbl) ++(-0.75,-1.0) coordinate (bl2) ++(0,-0.5) coordinate (bl);
    \path (mu) ++(-2.25,0) coordinate[dot, functorL] (muL)
    +(0,0.5) coordinate (dL)
    +(-0.75,-0.75) coordinate (mblL)
    +(0.75,-0.75) coordinate (mbrL);
    \path (mbrL) ++(0.75,-1.0) coordinate (brL2) ++(0,-0.5) coordinate (brL);
    \path (mblL) ++(0,-1.5) coordinate (blL);
    \path (dL) ++(+1.5,1) coordinate[dot, functorL] (muUL)
    +(0,1.0) coordinate (dUL)
    +(-1.5,-1) coordinate (mblUL)
    +(1.5,-1) coordinate (mbrUL);
    \path (d) ++(+1,1) coordinate[dot, functorM] (muU)
    +(0,1.0) coordinate (dU)
    +(-1,-1) coordinate (mblU)
    +(1,-1) coordinate (mbrU);
    \coordinate (brr) at (mbrU |- br);
    \coordinate (brmid) at ($(br)!0.5!(brr)$);
    \end{scope}
    \draw[functorM] (bl) -- (bl2) to[out=90, in=-90] (mbl) to[out=90, in=180] (mu.center) to[out=0, in=90] (mbr) -- (br)
                     (mu) -- (d) -- (mblU) to[out=90, in=180] (muU.center) to[out=0, in=90] (mbrU) -- (brr)
                     (muU) -- (dU);
    \draw[functorL, cross line] (blL) -- (mblL) to[out=90, in=180] (muL.center) to[out=0, in=90] (mbrL) to[out=-90, in=90] (brL2) -- (brL)
                     (muL) to[out=90, in=180] (muUL.center)
                     (muUL) -- (dUL)
                     (muUL.center) to[out=0, in=90] (brmid);
    \coordinate (tl) at (mblL |- dUL);
    \coordinate (cornerNW) at ($(tl) + (-0.5,0)$);
    \coordinate (cornerSE) at ($(brr) + (0.5,0)$);
    \draw (cornerNW) rectangle (cornerSE);
   \end{tikzpicture}
   &\enspace=\enspace
   \begin{tikzpicture}[scale=0.75,baseline={([yshift=-0.5ex]current bounding box.center)}]
    \begin{scope}[on layer=over]
    \path coordinate[dot, functorM] (mu)
    +(0,0.5) coordinate (d)
    +(-0.75,-0.75) coordinate (mbl)
    +(0.75,-0.75) coordinate (mbr);
    \path (mbr) ++(0,-1.5) coordinate (br);
    \path (mbl) ++(-0.75,-1.0) coordinate (bl2) ++(0,-0.5) coordinate (bl);
    \path (mu) ++(-2.25,0) coordinate[dot, functorL] (muL)
    +(0,0.5) coordinate (dL)
    +(-0.75,-0.75) coordinate (mblL)
    +(0.75,-0.75) coordinate (mbrL);
    \path (mbrL) ++(0.75,-1.0) coordinate (brL2) ++(0,-0.5) coordinate (brL);
    \path (mblL) ++(0,-1.5) coordinate (blL);
    \path (dL) ++(+0.625,1) coordinate[dot, functorL] (muUL)
    +(0,1.0) coordinate (dUL)
    +(-0.625,-1) coordinate (mblUL)
    +(0.625,-1.25) coordinate (mbrUL);
    \path (d) ++(+1,1) coordinate[dot, functorM] (muU)
    +(0,1.0) coordinate (dU)
    +(-1,-1) coordinate (mblU)
    +(1,-1) coordinate (mbrU);
    \coordinate (brr) at (mbrU |- br);
    \coordinate (brmid) at ($(br)!0.5!(brr)$);
    \end{scope}
    \draw[functorM] (bl) -- (bl2) to[out=90, in=-90] (mbl) to[out=90, in=180] (mu.center) to[out=0, in=90] (mbr) -- (br)
                    (mu) -- (d) -- (mblU) to[out=90, in=180] (muU.center) to[out=0, in=90] (mbrU) -- (brr)
                    (muU) -- (dU);
    \draw[functorL, cross line] (blL) -- (mblL) to[out=90, in=180] (muL.center) to[out=0, in=90] (mbrL) to[out=-90, in=90] (brL2) -- (brL)
                    (muL) -- (dL) -- (mblUL) to[out=90, in=180] (muUL.center)
                    (muUL) -- (dUL)
                    (muUL.center) to[out=0, in=90] (mbrUL) to[out=-90, in=90] (brmid);
    \coordinate (tl) at (mblL |- dUL);
    \coordinate (cornerNW) at ($(tl) + (-0.5,0)$);
    \coordinate (cornerSE) at ($(brr) + (0.5,0)$);
    \draw (cornerNW) rectangle (cornerSE);
   \end{tikzpicture}
   \enspace=\enspace
   \begin{tikzpicture}[scale=0.75,baseline={([yshift=-0.5ex]current bounding box.center)}]
    \begin{scope}[on layer=over]
    \path coordinate (mu)
    +(0,0.5) coordinate (d);
    \path (mu) ++(-2.25,0) coordinate[dot, functorL] (muL)
    +(0,0.5) coordinate (dL)
    +(-0.75,-0.75) coordinate (mblL)
    +(0.75,-0.75) coordinate (mbrL);
    \path (mbrL) ++(0.75,-1.0) coordinate (brL2) ++(0,-0.5) coordinate (brL);
    \path (mblL) ++(0,-1.5) coordinate (blL);
    \path (dL) ++(+0.625,1) coordinate[dot, functorL] (muUL)
    +(0,1.0) coordinate (dUL)
    +(-0.625,-1) coordinate (mblUL)
    +(0.625,-1.25) coordinate (mbrUL);
    \path (d) ++(+0.625,1) coordinate[dot, functorM] (muU)
    +(0,1.0) coordinate (dU)
    +(-0.625,-1.25) coordinate (mblU)
    +(0.625,-1.0) coordinate (mbrU);
    \path (mbrU) ++(0,-0.5) coordinate[dot, functorM] (muNew)
                  +(-0.75,-0.75) coordinate (mblNew)
                  +(0.75,-0.75) coordinate (mbrNew);
    \path (mblNew) ++(-0.75,-1.0) coordinate (blNew2) ++(0,-0.5) coordinate (blNew);
    \coordinate (brmidL) at (mbrL |- blNew);
    \path (brmidL) ++(0,0.25) coordinate (brmidL2);
    \coordinate (brmid) at (mblNew |- blNew);
    \path (brmid) ++(0,0.25) coordinate (brmid2);
    \coordinate (brr) at (mbrNew |- blNew);
    \end{scope}
    \draw[functorM] (brmidL) -- (brmidL2) to[out=90, in=-90] (mblU)
                    (mblU) to[out=90, in=180] (muU.center) to[out=0, in=90] (mbrU) -- (muNew)
                    (muU) -- (dU)
                    (blNew) -- (blNew2) to[out=90, in=-90] (mblNew) to[out=90, in=180] (muNew.center) to[out=0, in=90] (mbrNew) -- (brr);
    \draw[functorL, cross line] (blL) -- (mblL) to[out=90, in=180] (muL.center) to[out=0, in=90] (mbrL) to[out=-90, in=90] (brL2) -- (brL)
                    (muL) -- (dL) -- (mblUL) to[out=90, in=180] (muUL.center)
                    (muUL) -- (dUL)
                    (muUL.center) to[out=0, in=90] (mbrUL) to[out=-90, in=90] (brmid2) -- (brmid);
    \coordinate (tl) at (mblL |- dUL);
    \coordinate (cornerNW) at ($(tl) + (-0.5,0)$);
    \coordinate (cornerSE) at ($(brr) + (0.5,0)$);
    \draw (cornerNW) rectangle (cornerSE);
   \end{tikzpicture}
   \\[5pt] =\enspace
   \begin{tikzpicture}[scale=0.75,baseline={([yshift=-0.5ex]current bounding box.center)}]
    \begin{scope}[on layer=over]
    \path coordinate[dot, functorL] (mu)
    +(0,0.5) coordinate (d)
    +(0.75,-0.75) coordinate (mbl)
    +(-0.75,-0.75) coordinate (mbr);
    \path (mbr) ++(0,-1.5) coordinate (br);
    \path (mbl) ++(0.75,-1.0) coordinate (bl2) ++(0,-0.5) coordinate (bl);
    \path (mu) ++(2.25,0) coordinate[dot, functorM] (muL)
    +(0,0.5) coordinate (dL)
    +(0.75,-0.75) coordinate (mblL)
    +(-0.75,-0.75) coordinate (mbrL);
    \path (mbrL) ++(-0.75,-1.0) coordinate (brL2) ++(0,-0.5) coordinate (brL);
    \path (mblL) ++(0,-1.5) coordinate (blL);
    \path (dL) ++(-0.625,1) coordinate[dot, functorM] (muUL)
    +(0,1.0) coordinate (dUL)
    +(0.625,-1) coordinate (mblUL)
    +(-0.625,-1.25) coordinate (mbrUL);
    \path (d) ++(-1,1) coordinate[dot, functorL] (muU)
    +(0,1.0) coordinate (dU)
    +(1,-1) coordinate (mblU)
    +(-1,-1) coordinate (mbrU);
    \coordinate (brr) at (mbrU |- br);
    \coordinate (brmid) at ($(br)!0.5!(brr)$);
    \end{scope}
    \draw[functorM] (blL) -- (mblL) to[out=90, in=0] (muL.center) to[out=180, in=90] (mbrL) to[out=-90, in=90] (brL2) -- (brL)
                    (muL) -- (dL) -- (mblUL) to[out=90, in=0] (muUL.center)
                    (muUL) -- (dUL)
                    (muUL.center) to[out=180, in=90] (mbrUL) to[out=-90, in=90] (brmid);
    \draw[functorL, cross line] (bl) -- (bl2) to[out=90, in=-90] (mbl) to[out=90, in=0] (mu.center) to[out=180, in=90] (mbr) -- (br)
                    (mu) -- (d) -- (mblU) to[out=90, in=0] (muU.center) to[out=180, in=90] (mbrU) -- (brr)
                    (muU) -- (dU);
    \coordinate (tl) at (mblL |- dUL);
    \coordinate (cornerNW) at ($(tl) + (0.5,0)$);
    \coordinate (cornerSE) at ($(brr) + (-0.5,0)$);
    \draw (cornerNW) rectangle (cornerSE);
   \end{tikzpicture}
   &\enspace=\enspace
   \begin{tikzpicture}[scale=0.75,baseline={([yshift=-0.5ex]current bounding box.center)}]
    \begin{scope}[on layer=over]
    \path coordinate[dot, functorL] (mu)
    +(0,0.5) coordinate (d)
    +(0.75,-0.75) coordinate (mbl)
    +(-0.75,-0.75) coordinate (mbr);
    \path (mbr) ++(0,-1.5) coordinate (br);
    \path (mbl) ++(0.75,-1.0) coordinate (bl2) ++(0,-0.5) coordinate (bl);
    \path (mu) ++(2.25,0) coordinate[dot, functorM] (muL)
    +(0,0.5) coordinate (dL)
    +(0.75,-0.75) coordinate (mblL)
    +(-0.75,-0.75) coordinate (mbrL);
    \path (mbrL) ++(-0.75,-1.0) coordinate (brL2) ++(0,-0.5) coordinate (brL);
    \path (mblL) ++(0,-1.5) coordinate (blL);
    \path (dL) ++(-1.5,1) coordinate[dot, functorM] (muUL)
    +(0,1.0) coordinate (dUL)
    +(1.5,-1) coordinate (mblUL)
    +(-1.5,-1) coordinate (mbrUL);
    \path (d) ++(-1,1) coordinate[dot, functorL] (muU)
    +(0,1.0) coordinate (dU)
    +(1,-1) coordinate (mblU)
    +(-1,-1) coordinate (mbrU);
    \coordinate (brr) at (mbrU |- br);
    \coordinate (brmid) at ($(br)!0.5!(brr)$);
    \end{scope}
    \draw[functorM] (blL) -- (mblL) to[out=90, in=0] (muL.center) to[out=180, in=90] (mbrL) to[out=-90, in=90] (brL2) -- (brL)
                    (muL) to[out=90, in=0] (muUL.center)
                    (muUL) -- (dUL)
                    (muUL.center) to[out=180, in=90] (brmid);
    \draw[functorL, cross line] (bl) -- (bl2) to[out=90, in=-90] (mbl) to[out=90, in=0] (mu.center) to[out=180, in=90] (mbr) -- (br)
                    (mu) -- (d) -- (mblU) to[out=90, in=0] (muU.center) to[out=180, in=90] (mbrU) -- (brr)
                    (muU) -- (dU);
    \coordinate (tl) at (mblL |- dUL);
    \coordinate (cornerNW) at ($(tl) + (0.5,0)$);
    \coordinate (cornerSE) at ($(brr) + (-0.5,0)$);
    \draw (cornerNW) rectangle (cornerSE);
   \end{tikzpicture}
  \end{align*}
  \endgroup
  Here we use condition \ref{eq:distcomp1} for the first equality, associativity of the compositors for the next two equalities and \ref{eq:distcomp2} for the final equality.
  
  For the condition on unitors, consider the following.
  \\ \vspace{-3pt plus 1pt}
  \begin{align*}
   \begin{tikzpicture}[scale=0.9,baseline={([yshift=-0.5ex]current bounding box.center)}]
    \begin{scope}[on layer=over]
    \path coordinate[dot, functorM] (mu)
    +(0,0.625) coordinate (d)
    +(-0.75,-0.75) coordinate (mbl)
    +(0.75,-0.75) coordinate (mbr);
    \path (mbr) ++(0,-1.5) coordinate[dot, functorM] (br);
    \path (mbl) ++(-0.75,-1.0) coordinate (bl2) ++(0,-1.0) coordinate (bl);
    \path (mu) ++(-2.25,0) coordinate[dot, functorL] (muL)
    +(0,0.625) coordinate (dL)
    +(-0.75,-0.75) coordinate (mblL)
    +(0.75,-0.75) coordinate (mbrL);
    \path (mbrL) ++(0.75,-1.0) coordinate (brL2) ++(0,-0.5) coordinate[dot, functorL] (brL);
    \path (mblL) ++(0,-2.0) coordinate (blL);
    \end{scope}
    \draw[functorM] (bl) -- (bl2) to[out=90, in=-90] (mbl) to[out=90, in=180] (mu.center) to[out=0, in=90] (mbr) -- (br)
                    (mu) -- (d);
    \draw[functorL, cross line] (blL) -- (mblL) to[out=90, in=180] (muL.center) to[out=0, in=90] (mbrL) to[out=-90, in=90] (brL2) -- (brL)
                    (muL) -- (dL);
    \coordinate (tl) at (dL -| blL);
    \coordinate (bbr) at (bl -| br);
    \coordinate (cornerNW) at ($(tl) + (-0.5,0)$);
    \coordinate (cornerSE) at ($(bbr) + (0.5,0)$);
    \draw (cornerNW) rectangle (cornerSE);
   \end{tikzpicture}
   &\enspace=\enspace
   \begin{tikzpicture}[scale=0.9,baseline={([yshift=-0.5ex]current bounding box.center)}]
    \begin{scope}[on layer=over]
    \path coordinate (mu)
    +(0,0.5) coordinate (d);
    \path (mu) ++(-1.75,-2.25) coordinate (bl2) ++(0,-0.625) coordinate (bl);
    \path (d) ++(-1.75,0) coordinate (dL) ++ (0,-0.625) coordinate[dot, functorL] (muL)
    +(-0.75,-0.75) coordinate (mblL)
    +(0.75,-0.75) coordinate (mbrL);
    \path (mbrL) ++(0,-1.0) coordinate[dot, functorL] (brL);
    \path (mblL) ++(0,-2.0) coordinate (blL);
    \end{scope}
    \draw[functorM] (bl) -- (bl2) to[out=90, in=-90]  (mu) -- (d);
    \draw[functorL, cross line] (blL) -- (mblL) to[out=90, in=180] (muL.center) to[out=0, in=90] (mbrL) -- (brL)
                    (muL) -- (dL);
    \coordinate (tl) at (dL -| blL);
    \coordinate (bbr) at (bl -| mu);
    \coordinate (cornerNW) at ($(tl) + (-0.5,0)$);
    \coordinate (cornerSE) at ($(bbr) + (0.5,0)$);
    \draw (cornerNW) rectangle (cornerSE);
   \end{tikzpicture}
   \enspace=\enspace
   \begin{tikzpicture}[scale=0.9,baseline={([yshift=-0.5ex]current bounding box.center)}]
    \begin{scope}[on layer=over]
    \path coordinate (mu)
    +(0,0.5) coordinate (d);
    \path (mu) ++(-1.75,-2.25) coordinate (bl2) ++(0,-0.625) coordinate (bl);
    \path (d) ++(-1.375,0) coordinate (dL) ++ (0,-0.625) coordinate[dot, functorL] (muL)
    +(-0.75,-0.75) coordinate (mblL)
    +(0.75,-0.75) coordinate (mbrL);
    \path (mbrL) ++(0,-1.0) coordinate[dot, functorL] (brL);
    \path (mblL) ++(0,-2.0) coordinate (blL);
    \end{scope}
    \draw[functorM] (bl -| d) -- (d);
    \draw[functorL, cross line] (blL) -- (mblL) to[out=90, in=180] (muL.center) to[out=0, in=90] (mbrL) -- (brL)
                    (muL) -- (dL);
    \coordinate (tl) at (dL -| blL);
    \coordinate (bbr) at (bl -| mu);
    \coordinate (cornerNW) at ($(tl) + (-0.5,0)$);
    \coordinate (cornerSE) at ($(bbr) + (0.5,0)$);
    \draw (cornerNW) rectangle (cornerSE);
   \end{tikzpicture}
   \enspace=\enspace
   \begin{tikzpicture}[scale=0.9,baseline={([yshift=-0.5ex]current bounding box.center)}]
    \begin{scope}[on layer=over]
    \path coordinate (mu)
    +(0,0.375) coordinate (d);
    \path (mu) (0,-3.0) coordinate (bl);
    \path (d) ++(-0.75,0) coordinate (dL) ++ (0,-1.125) coordinate (muL);
    \path (muL) ++(0,-2.25) coordinate (blL);
    \end{scope}
    \draw[functorM] (bl) -- (mu) -- (d);
    \draw[functorL, cross line] (blL) to[out=90, in=-90] (muL) -- (dL);
    \coordinate (tl) at (dL);
    \coordinate (bbr) at (bl -| mu);
    \coordinate (cornerNW) at ($(tl) + (-0.5,0)$);
    \coordinate (cornerSE) at ($(bbr) + (0.5,0)$);
    \draw (cornerNW) rectangle (cornerSE);
   \end{tikzpicture}
  \end{align*}
  The other unit condition is symmetric.
  Thus, $P\colon \B \times \C \to \D$ is a lax bifunctor.
  
  Note that $P(B,-)$ coincides with $M_B$ and $P(-,C)$ coincides with $L_C$ on objects. Thus we have canonically defined 2-morphisms for all 1-morphisms $f\colon B_1 \to B_2$ and $g\colon C_1 \to C_2$ defined by $\iota^{B_2}_C L_C(f) \colon L_C(f) \to M_{B_2}(\id_C)L_C(f) = P(f,C)$ and $M_B(g) \iota^{C_1}_B\colon M_B(g) \to M_B(g)L_{C_1}(\id_B) = P(B,g)$.
  
  We claim the family of identity morphisms on $P(B,C) = M_B(C)$ and the family of 2-morphisms $\iota_C^{B_2}L_C(f)$ define an oplax transformation $\kappa^B$ between $M_B$ and $P(B,-)$.
  
  Expanding the axiom for respecting composition we find that we require the following equality. (Note that because the 1-morphisms of the oplax transformation are the identity, they are omitted).
  \\ \vspace{-3pt plus 1pt}
  \begin{equation*}
   \begin{tikzpicture}[scale=1.0,baseline={([yshift=-0.5ex]current bounding box.center)}]
    \begin{scope}[on layer=over]
    \path coordinate[dot, functorM] (mu)
    +(0,1.375) coordinate (d)
    +(-1,-1) coordinate (mbl)
    +(1,-1) coordinate (mbr);
    \path (d -| mbl) ++(-0.0,0) coordinate (tl) ++(0,-1.0) coordinate[dot, functorL] (tl2);
    \path (mbr) ++(0,-1.0) coordinate (br);
    \path (mbl) ++(0,-1.0) coordinate (bl);
    \end{scope}
    \newcommand{\crossing}{(tl) -- (tl2)}
    \newcommand{\fork}{
     (bl) -- (mbl) to[out=90, in=180] (mu.center) to[out=0, in=90] (mbr) -- (br)
     (mu) -- (d)
    }
    \coordinate (cornerNW) at ($(tl) + (-0.5,0)$);
    \coordinate (cornerSE) at ($(br) + (0.5,0)$);
    \draw[functorM] \fork;
    \draw[functorL, cross line] \crossing;
    \draw (cornerNW) rectangle (cornerSE);
   \end{tikzpicture}
   \enspace=\enspace
   \begin{tikzpicture}[scale=1.0,baseline={([yshift=-0.5ex]current bounding box.center)}]
    \begin{scope}[on layer=over]
    \path coordinate[dot, functorM] (mu)
    +(0,0.625) coordinate (d)
    +(-0.75,-0.75) coordinate (mbl)
    +(0.75,-0.75) coordinate (mbr);
    \path (mbl) ++(-0.75,-1.0) coordinate (bl2) ++(0,-1.0) coordinate (bl);
    \coordinate (br) at (mbr |- bl);
    \path (mu) ++(-2.25,0) coordinate[dot, functorL] (muL)
    +(0,0.625) coordinate (dL)
    +(-0.75,-0.75) coordinate (mblL)
    +(0.75,-0.75) coordinate (mbrL);
    \path (mbrL) ++(0.75,-1.0) coordinate (brL2) ++(0,-0.5) coordinate[dot, functorL] (brL);
    \path (mblL) ++(0,-1.5) coordinate[dot, functorL] (blL);
    \end{scope}
    \draw[functorM] (bl) -- (bl2) to[out=90, in=-90] (mbl) to[out=90, in=180] (mu.center) to[out=0, in=90] (mbr) -- (br)
                    (mu) -- (d);
    \draw[functorL, cross line] (blL) -- (mblL) to[out=90, in=180] (muL.center) to[out=0, in=90] (mbrL) to[out=-90, in=90] (brL2) -- (brL)
                                (muL) -- (dL);
    \coordinate (tl) at (dL -| blL);
    \coordinate (bbr) at (bl -| br);
    \coordinate (cornerNW) at ($(tl) + (-0.5,0)$);
    \coordinate (cornerSE) at ($(bbr) + (0.5,0)$);
    \draw (cornerNW) rectangle (cornerSE);
   \end{tikzpicture}
  \end{equation*}
  But again, this is clearly true by condition \ref{eq:distunit2} and the unit law for lax functors.
  
  Expanding the unit law for respecting units gives the horizontal composition $\iota^B \ast \iota^C$ in both cases and thus this condition is also satisfied.
  Similarly, the two sides of the 2-dimensional naturality condition both become $M_B(\alpha) \ast \iota^C$ and so we have indeed defined an oplax transformation. The same argument gives that $\kappa^C\colon L_C \to P(-,C)$ is an oplax transformation.
 \end{proof}
 
 \begin{definition}
 We refer to the process of constructing a lax bifunctor from data of a distributive law as \emph{collation} and to the result as the \emph{collated bifunctor}.
 \end{definition}
 
 As indicated above, we may recover the usual distributive laws of monads.
\begin{proposition}
  Given monads $S$ and $T$ in a 2-category $\D$, we obtain a monad structure on $TS$ from a distributive law $\sigma\colon S T \to T S$.
\end{proposition}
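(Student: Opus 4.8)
The plan is to deduce this from the lax bifunctor theorem (\Cref{thm:lax_bifunctor}) by taking both $\B$ and $\C$ to be the terminal 2-category $\mathbf{1}$. Recall, as noted in the background section, that a monad in $\D$ is the same thing as a lax functor $F\colon \mathbf{1}\to\D$: such an $F$ singles out an object $X=F(*)$ together with an endomorphism $F(\id_*)$ of it, and its compositor $\gamma_{\id_*,\id_*}$ and unitor $\iota_*$ are the monad multiplication and unit, the lax functor axioms becoming exactly associativity and the unit laws. Since $\mathbf{1}$ has a single object, a $\C$-indexed family of lax functors $\mathbf{1}\to\D$ is just one such functor, i.e.\ one monad; write $S$ for the monad arising from the $L$-family and $T$ for the monad arising from the $M$-family, so that $S$ corresponds to the red wire and $T$ to the blue wire, consistently with the conventions of both relevant definitions. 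The compatibility requirement $L_C(B)=M_B(C)$ then reads $L(*)=M(*)$, i.e.\ that $S$ and $T$ are monads on a common object, which is our hypothesis.

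Next I would verify that, under these identifications, an element of $\Dist(\mathbf{1},\mathbf{1},\D)$ is precisely a distributive law of monads $\sigma\colon ST\to TS$. As the only 1-morphisms of $\mathbf{1}$ are identities, the family $\sigma_{f,g}$ collapses to a single 2-morphism $\sigma\colon L(\id_*)M(\id_*)\to M(\id_*)L(\id_*)$, that is, $\sigma\colon ST\to TS$. The six coherence axioms specialise as follows: \ref{eq:distnat1} and \ref{eq:distnat2} become vacuous, the only 2-morphisms $\beta$ and $\alpha$ available being identities; \ref{eq:distcomp1} and \ref{eq:distcomp2} reduce to the two multiplication axioms of a monad distributive law, the compositors $\gamma^{B_i}_{g_2,g_1}$ and $\gamma^{C_i}_{f_2,f_1}$ becoming the monad multiplications $\gamma^T$ and $\gamma^S$; and \ref{eq:distunit1} and \ref{eq:distunit2} reduce to the two unit axioms, the unitors $\iota^{B_i}_C$ and $\iota^{C_i}_B$ becoming $\iota^T$ and $\iota^S$. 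Hence the two notions of distributive law coincide, and the given $\sigma$ supplies an element of $\Dist(\mathbf{1},\mathbf{1},\D)$.

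Applying \Cref{thm:lax_bifunctor} to this element produces a lax bifunctor $P\colon \mathbf{1}\times\mathbf{1}\to\D$. Since $\mathbf{1}\times\mathbf{1}\cong\mathbf{1}$, the functor $P$ is itself a lax functor out of the terminal 2-category, i.e.\ a monad, and it remains only to read off its data from the theorem. Its underlying endomorphism is $P(\id_*,\id_*)=M(\id_*)L(\id_*)=TS$; its unit is $\iota_{*,*}=\iota^{B}_{C}\ast\iota^{C}_{B}=\iota^T\ast\iota^S$; and its multiplication is the 2-morphism given by the compositor string diagram of the theorem, in which the $L$- and $M$-forks are now the monad multiplications $\gamma^S$ and $\gamma^T$ and the crossing is $\sigma$ applied to the central $ST$, so that it is precisely $(\gamma^T\ast\gamma^S)(T\sigma S)$. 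This is exactly the monad structure on $TS$ recorded just before the proposition.

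The only genuine content is the axiom-by-axiom matching of the second paragraph and the identification of the compositor diagram of \Cref{thm:lax_bifunctor} with the standard formula $(\gamma^T\ast\gamma^S)(T\sigma S)$; I expect this to be the main (and only) obstacle, but it amounts to careful bookkeeping about which wire plays the role of $S$ and which of $T$ and poses no real difficulty once the conventions are fixed. One may also remark in passing that the canonical oplax transformations $\kappa^B$ and $\kappa^C$ of the theorem specialise here to the monad morphisms $T\to TS$ and $S\to TS$ exhibiting $TS$ as a ``sum'' of the two monads, though this is not needed for the statement.
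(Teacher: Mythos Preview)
Your proposal is correct and follows exactly the paper's approach: apply \Cref{thm:lax_bifunctor} with $\B=\C=\mathbf{1}$, the families $L$ and $M$ given by the lax functors corresponding to $S$ and $T$, and note that \ref{eq:distnat1} and \ref{eq:distnat2} are vacuous. Your write-up simply unpacks more of the bookkeeping (the axiom-by-axiom matching and the explicit identification of the resulting monad data with $(TS,(\gamma^T\ast\gamma^S)(T\sigma S),\iota^T\ast\iota^S)$) than the paper's two-line proof does.
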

\begin{proof}
  Simply apply \cref{thm:lax_bifunctor} with $L\colon 1 \to \D$ being the lax functor corresponding to $S$ and $M\colon 1 \to \D$ the lax functor corresponding to $T$. (Note the conditions \ref{eq:distnat1} and \ref{eq:distnat2} hold automatically, since the 2-category $1$ has no nontrivial 2-morphisms.)
\end{proof}
Of course, we also obtain distributive laws for comonads working with lax functors into $\D^\mathrm{co}$.

Unlike in the 1-dimensional setting, it is not in general possible to recover the data of the distributive law from the collated bifunctor. This is already clear in the monad case, since any composite monad $TS$ could also be obtained from the trivial distributive law between $TS$ and the identity monad.
Later in \cref{sec:converse} we will discuss conditions under which it is possible to go backwards. In particular, this is the case for invertible distributive laws between pseudofunctors.
In order to make sense of this reverse process we will need to understand when two distributive laws are `the same'. This motivates an exploration of their (2-)categorical structure.

\section{Uncurrying and the 2-category of distributive laws}\label{sec:uncurrying}

Even in the 1-dimensional case it is clear that there ought to be a version of the bifunctor theorem that applies to morphisms. Let $\B$, $\C$ and $\D$ be categories and let the functors $L^1_C$ and $M^1_B$ form one `distributive law' and functors $L^2_C$ and $M^2_B$ form another.
Suppose we have natural transformations $\theta^C\colon L^1_C \to L^2_C$ and $\theta^B \colon M_B^1 \to M_B^2$ satisfying that $\theta^B_C = \theta^C_B$. Then we may `collate' these families of natural transformations to give a natural transformation $\theta\colon P^1 \to P^2$ defined by $\theta_{B,C} = \theta^B_C$, where $P^1$ and $P^2$ denote the collations of the respective families of functors. We expect an analogue of this result in the lax functor setting, as well as a corresponding result for 2-morphisms. These families of natural transformations may be taken as morphisms of distributive laws. Indeed, in the 1-dimensional setting the resulting category $\Dist(\B,\C,\D)$ is equivalent to $\Hom(\B \times \C,\D)$.
It is natural then to ask what the 1-morphisms and 2-morphisms are in our lax setting. Here it is helpful to take inspiration from the category of distributive laws of monads.

In the monad setting, distributive laws in a 2-category $\C$ may be thought of as monads in the 2-category of monads in $\C$ (see \cite{street1972formal}). Using the correspondence with lax functors this means that these distributive laws correspond to objects of the 2-category $\Laxop(1,\Laxop(1,X))$. Then the process of constructing a monad on $X$ from a distributive law corresponds to a transformation sending objects in $\Laxop(1,\Laxop(1,X))$ to objects in $\Laxop(1,X) \cong \Laxop(1 {\times} 1,X)$.
This bears a strong resemblance to the process of \emph{uncurrying}. In this section we make this connection precise in our more general setting. In doing so, light is shed on the resemblance between the conditions of distributive laws and (op)lax transformations.
This will also provide a link to related work by Nikolić \cite{nikolic2018strictification}.

\begin{lemma}\label{prp:laxopbijectioonobjects}
  There is a bijection between the objects of $\Laxop(\B,\Laxop(\C,\D))$ and $\Dist(\B,\C,\D)$.
\end{lemma}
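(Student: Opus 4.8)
The plan is to observe that an object of $\Laxop(\B,\Laxop(\C,\D))$, once fully expanded into components, \emph{is} a distributive law, so that the asserted bijection is a mere relabelling. First I would write out the data of a lax functor $\Phi\colon\B\to\Laxop(\C,\D)$. For each object $B$ of $\B$, $\Phi(B)$ is a lax functor $\C\to\D$, which I record as $M_B$. For each $1$-morphism $f\colon B_1\to B_2$ of $\B$, the value $\Phi(f)$ is a $1$-morphism of $\Laxop(\C,\D)$, that is, an oplax transformation $M_{B_1}\to M_{B_2}$; its $1$-morphism components $\Phi(f)_C\colon M_{B_1}(C)\to M_{B_2}(C)$ I record as $L_C(f)$, where on objects $L_C(B)$ is forced to be $\Phi(B)(C)=M_B(C)$, matching exactly the compatibility required of a distributive law, and its $2$-morphism components $\Phi(f)_g\colon L_{C_2}(f)M_{B_1}(g)\to M_{B_2}(g)L_{C_1}(f)$ I record as $\sigma_{f,g}$. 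For each $2$-morphism $\alpha\colon f\to f'$ of $\B$, the value $\Phi(\alpha)$ is a modification $\Phi(f)\to\Phi(f')$ whose components $\Phi(\alpha)_C\colon L_C(f)\to L_C(f')$ I record as $L_C(\alpha)$. Finally, the compositor and unitor of $\Phi$ are modifications, whose $C$-components I record as $\gamma^C_{f',f}\colon L_C(f')L_C(f)\to L_C(f'f)$ and $\iota^C_B\colon\id_{M_B(C)}\to L_C(\id_B)$. Running this reading backwards rebuilds $\Phi$ from a distributive law by regrouping, and the two passages are patently mutually inverse since neither does anything but rename the same $1$- and $2$-morphisms; so it remains only to check that the axioms on the two sides match.

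The axiom-matching goes as follows. The three oplax-transformation axioms of each $\Phi(f)$ — respecting composition in $\C$, respecting identities, and naturality in the $\C$-morphism — become, under the dictionary above, precisely \ref{eq:distcomp1}, \ref{eq:distunit1} and \ref{eq:distnat1}; this is also why the compositors $\gamma^B$ and unitors $\iota^B$ of the $M$ family feature in the first two. The modification axiom for each $\Phi(\alpha)$ becomes \ref{eq:distnat2}; the modification axiom for the compositor of $\Phi$ becomes \ref{eq:distcomp2} (now involving the $L$-compositors); and the modification axiom for the unitor of $\Phi$ becomes \ref{eq:distunit2} (involving the $L$-unitors). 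That accounts for all six conditions D1--D6. The remaining requirements on $\Phi$ as a lax functor — functoriality of $\Phi_{B_1,B_2}$ on $2$-morphisms, together with the naturality, associativity and unit axioms for the compositor and unitor of $\Phi$ — are all equations between modifications, hence hold if and only if they hold in every $C$-component; componentwise they are exactly the assertions that each $L_C$ is a lax functor, which is part of the data of a member of $\Dist(\B,\C,\D)$. Since each $M_B$ is a lax functor by construction on both sides, the data and the axioms correspond bijectively, which is the claimed bijection.

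I expect the only step needing real care to be the translation of the compositor-modification axiom for $\Phi$ into \ref{eq:distcomp2}: one must first expand the composite oplax transformation $\Phi(f')\Phi(f)$ via the formula $(\rho^2\rho^1)_g=\rho^2_g\,\rho^1_X\circ\rho^2_Y\,\rho^1_g$ stated earlier, and only after substituting $\rho^1=\Phi(f)$, $\rho^2=\Phi(f')$ does the left-hand side of \ref{eq:distcomp2} emerge. Everything else is a mechanical but somewhat lengthy comparison between the string-diagram calculus of $\Laxop(\C,\D)$ and the planar diagrams of D1--D6, the main pitfall being the orientation conventions and keeping straight which of the $M$- and $L$-families supplies each compositor, unitor, or action on a $2$-cell. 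I anticipate that this bookkeeping, rather than any conceptual subtlety, will constitute the bulk of the proof.
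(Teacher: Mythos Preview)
Your proposal is correct and follows essentially the same approach as the paper: both set $M_B=\Phi(B)$, $L_C(f)=\Phi(f)_C$, $\sigma_{f,g}=\Phi(f)_g$, and then match the oplax-transformation axioms for $\Phi(f)$ with \ref{eq:distcomp1}, \ref{eq:distunit1}, \ref{eq:distnat1}, the modification axioms for the compositor, unitor, and $\Phi(\alpha)$ with \ref{eq:distcomp2}, \ref{eq:distunit2}, \ref{eq:distnat2}, and the remaining lax-functor axioms for $\Phi$ with the lax-functor axioms for each $L_C$. Your explicit remark about unpacking the composite oplax transformation in verifying \ref{eq:distcomp2} is a helpful addition, but otherwise the arguments are the same relabelling exercise.
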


\begin{proof}
    Let $(Q,\gamma,\iota) \colon \B \to \Laxop(\C,\D)$ be a lax functor. We shall specify the data of a distributive law $\sigma$ between families $L$ and $M$.
    
    We define the family $M$ by $M_B = Q(B)$. Now let $f \colon B_1 \to B_2$ be a 1-morphism in $\B$ and consider the oplax transformation $Q(f) \colon M_{B_1} \to M_{B_2}$.
    Notice that if $g \colon C_1 \to C_2$ is a 1-morphism in $\C$, we have a 2-morphism $Q(f)_g\colon Q(f)_{C_2}M_{B_1}(g) \to M_{B_2}(g)Q(f)_{C_1}$ in $\D$. So if $L_C(f)$ were equal to $Q(f)_C$, we could treat $Q(f)_g$ as a candidate for $\sigma_{f,g}$. This suggests the following definition for the family $L$.
    
    Let $C$ be an object in $\C$ and define $(L_C,\gamma^C,\iota^C)$ as follows:
    \begin{itemize}
        \item $L_C(B) = M_B(C) = Q(B)(C)$ for objects $B$ in $\B$,
        \item $L_C(f) = Q(f)_C$ for 1-morphisms $f$ in $\B$,
        \item $L_C(\theta) = Q(\theta)_C$ for 2-morphisms $\theta$ in $\B$,
        \item $\iota^C_B = (\iota_B)_C\colon \id_{L_C(B)} \to L_C(\id_B)$,
        \item $\gamma^C_{f_2,f_1} = (\gamma_{f_2,f_1})_C \colon L_C(f_2)L_C(f_1) \to L_C(f_2f_1)$.
    \end{itemize}
    
    We must show that $(L_C,\gamma^C,\iota^C)$ satisfies the conditions in the definition of a lax functor. But this follows immediately from the fact that $(Q,\gamma,\iota)$ satisfies these equations and that composition is computed componentwise.
    
    We may now define $\sigma_{f,g} = Q(f)_g\colon L_{C_2}(f)M_{B_1}(g) \to M_{B_2}(g)L_{C_1}(f)$. It remains to show that $\sigma$ satisfies the six requisite conditions.
    
    Conditions \ref{eq:distcomp1}, \ref{eq:distunit1} and \ref{eq:distnat1} are automatically satisfied as $Q(f)$ is an oplax transformation. Conditions \ref{eq:distcomp2} and \ref{eq:distunit2} follow from the fact that $\gamma_{f_2,f_1}$ and $\iota_B$ are modifications and condition \ref{eq:distnat2} follows because $Q(\alpha)$ is a modification.
    
    In the other direction, suppose that $\sigma$ is a distributive law between families $L$ and $M$.
    We define a lax functor $(Q,\gamma,\iota)\colon \B \to \Laxop(\C,\D)$ as follows.
    Firstly, for objects $B$ in $\B$ we set $Q(B) = M_B$. 
    
    Next, for each 1-morphism $f$ in $\B$ we must give an oplax transformation $Q(f)$. It is enough to specify it componentwise. Let $Q(f)_C = L_C(f)$ and let $Q(f)_g = \sigma_{f,g}$. Conditions \ref{eq:distcomp1}, \ref{eq:distunit1} and \ref{eq:distnat1} together imply that this is an oplax transformation.
    
    For a 2-morphism $\theta$ in $\B$ we require a modification $Q(\theta)$, which we again define componentwise by setting $Q(\theta)_C = L_C(\theta)$. Condition \ref{eq:distnat2} gives that this is indeed a modification.
    Note that $Q$ preserves composition of 2-morphisms, since each $L_C$ does.
    
    Finally, both the compositor $\gamma$ and the unitor $\iota$ should be modifications. We may define these componentwise as $(\gamma_{f_2,f_1})_C = \gamma^C_{f_2,f_1}$ and $(\iota_{B})_C = \iota^C_B$. These are modifications by conditions \ref{eq:distcomp2} and \ref{eq:distunit2} respectively.
    
    Because each $(L_C,\gamma^C,\iota^C)$ is a lax functor, we have that $\gamma$ and $\iota$ satisfy the requisite diagrams componentwise. This is enough to deduce that $(Q,\gamma,\iota)$ is a lax functor.
    
    It is apparent that these processes are inverses.
\end{proof}

\begin{remark}
The above proof explains the resemblance between conditions \ref{eq:distcomp1}, \ref{eq:distunit1} and \ref{eq:distnat1} and the axioms for an oplax transformation.

Incidentally, there is also a `dual' bijection between $\Dist(\B,\C,\D)$ and lax functors from $\C$ to $\mathrm{Lax}(\B,\D)$, the 2-category lax functors, lax transformations and modifications. This correspondence explains the relation between conditions \ref{eq:distcomp2}, \ref{eq:distunit2} and \ref{eq:distnat2} and the axioms of a lax transformation.
\end{remark}

Using similar ideas, we may now unwind the 1-morphisms and 2-morphisms in the 2-category $\Laxop(\B,\Laxop(\C,\D))$ to produce appropriate definitions for 1-morphisms and 2-morphisms of distributive laws.

Suppose we are given an oplax transformation $\widehat{\theta}$ between lax functors $Q^1, Q^2\colon \mathcal{B} \to \Laxop(\mathcal{C}, \mathcal{D})$. From $\widehat{\theta}$ we will extract families of oplax transformations $\theta^B\colon M^1_B \to M^2_B$ and $\theta^C\colon L^1_C \to L^2_C$, where the families $M^1$ and $L^1$ and $M^2$ and $L^2$ have been extracted from $Q^1$ and $Q^2$ respectively as in \cref{prp:laxopbijectioonobjects}.

The oplax transformations $\theta^B$ are given by $\widehat{\theta}_B$ and are evidently seen to be oplax transformations between $Q^1(B) = M^1_B$ and $Q^2(B) = M^2_B$.

On the other hand the oplax transformations $\theta^C$ can be defined componentwise by $(\theta^C)_B = (\widehat{\theta}_B)_C$ and $(\theta^C)_f = (\widehat{\theta}_f)_C$. The resulting family $\theta^C$ is seen to be an oplax transformation by considering the $C$ components of the corresponding axioms for $\widehat{\theta}$.

By construction $\theta^B_C = \theta^C_B$ for all $B$ and $C$. Moreover, since $\widehat{\theta}_f$ is a modification, we find that the families satisfy the Yang--Baxter equation. This motivates the following definition.

\begin{definition}\label{def:morphismofdist}
  Let $\sigma^1, \sigma^2 \in \Dist(\B, \C, \D)$ be distributive laws between families of lax functors $L^1$ and $M^1$ and $L^2$ and $M^2$ respectively. We define a \emph{morphism of distributive laws} between them to be a pair of families of oplax transformations $\theta^C\colon L_C^1 \to L_C^2$ and $\theta^B\colon M_B^1 \to M_B^2$ such that $\theta^C_B = \theta^B_C$ and which satisfy the Yang--Baxter equation.
  
  \begin{equation*} 
   \begin{tikzpicture}[scale=1.0,baseline={([yshift=-0.5ex]current bounding box.center)}]
    \begin{scope}[on layer=over]
    \path coordinate (d) ++(2,-3.5) coordinate (b);
    \path (d) ++(-0.75,0) coordinate (tl) ++(0,-1.5) coordinate (tl2);
    \path (b) ++ (0.75,0) coordinate (brr) ++(0,0.25) coordinate (rr);
    \end{scope}
    \draw[functorM] (d -| b) -- (d |- b);
    \draw[functorL, cross line] (d) -- (b);
    \draw[cross line] (tl) -- (tl2) to[out=270, in=90] (rr) -- (brr);
    \coordinate (cornerNW) at ($(tl) + (-0.5,0)$);
    \coordinate (cornerSE) at ($(brr) + (0.5,0)$);
    \draw (cornerNW) rectangle (cornerSE);
   \end{tikzpicture}
   \enspace =\enspace
   \begin{tikzpicture}[scale=1.0,baseline={([yshift=-0.5ex]current bounding box.center)}]
    \begin{scope}[on layer=over]
    \path coordinate (d) ++(2,-3.5) coordinate (b);
    \path (d) ++(-0.75,0) coordinate (tl) ++(0,-0.25) coordinate (tl2);
    \path (b) ++ (0.75,0) coordinate (brr) ++(0,1.5) coordinate (rr);
    \end{scope}
    \draw[functorM] (d -| b) -- (d |- b);
    \draw[functorL, cross line] (d) -- (b);
    \draw[cross line] (tl) -- (tl2) to[out=270, in=90] (rr) -- (brr);
    \coordinate (cornerNW) at ($(tl) + (-0.5,0)$);
    \coordinate (cornerSE) at ($(brr) + (0.5,0)$);
    \draw (cornerNW) rectangle (cornerSE);
   \end{tikzpicture}
  \end{equation*}
  
  We will write $\overline{\theta}$ for such a morphism where the oplax transformations are denoted by $\theta^C$ and $\theta^B$.
\end{definition}

Next we study 2-morphisms. Let $\widehat{\beth}\colon \widehat{\theta} \to \widehat{\zeta}$ be a modification, where $\widehat{\theta}, \widehat{\zeta}\colon Q^1 \to Q^2$ are oplax transformations in $\Laxop(\B,\Laxop(\C,\D))$. We extract families of modifications $\beth^B\colon \theta^B \to \zeta^B$ and $\beth^C\colon \theta^C \to \zeta^C$ as follows.

The modifications $\beth^B$ are simply the maps $\widehat{\beth}_B\colon \widehat{\theta}_B \to \widehat{\zeta}_B$. We define the modification $\beth^C$ componentwise by $\beth^C_B = (\widehat{\beth}_B)_C$. This definition type checks since $\theta^B_C = \theta^C_B$ and $\zeta^B_C = \zeta^C_B$ and $\beth^C$ is seen to be a modification by evaluating the modification axiom of $\widehat{\beth}$ componentwise. We also note that $\beth^B_C = \beth^C_B$.

\begin{definition}
  Let $\sigma^1, \sigma^2 \in \Dist(\B, \C, \D)$ be distributive laws and let $\overline{\theta}, \overline{\zeta}\colon \sigma^1 \to \sigma^2$ be morphisms of distributive laws. We define a \emph{2-morphism of distributive laws} to be a pair of families of modifications $\beth^B\colon \theta^B \to \zeta^B$ and $\beth^C\colon \theta^C \to \zeta^C$ such that $\beth^B_C = \beth^C_B$.
  We will write $\overline{\beth}$ for such a 2-morphism where the modifications are denoted by $\beth^B$ and $\beth^C$.
\end{definition}

These form a 2-category of distributive laws.
\begin{definition}\label{def:2catDist}
  Let $\mathcal{B}$, $\mathcal{C}$ and $\mathcal{D}$ be $2$-categories. We define a $2$-category $\Dist(\B,\C\,\D)$ whose objects are distributive laws of lax functors from $\B$ and $\C$ to $\D$, whose 1-morphisms are morphisms of distributive laws and whose 2-morphisms are 2-morphisms of distributive laws.
  
  Composition of 1-morphisms is computed by composing the relevant families of oplax transformations componentwise. Vertical and horizontal composition of 2-morphisms is computed componentwise in a similar way.
  It is easy to see that the relevant conditions on these families are stable under composition and so $\Dist(\B,\C\,\D)$ is indeed a 2-category. 
\end{definition}

\begin{theorem}\label{thm:equivalence_between_dist_and_iterated_functor_cat}
    The 2-categories $\Dist(\B,\C,\D)$ and $\Laxop(\B,\Laxop(\C,\D))$ are isomorphic.
\end{theorem}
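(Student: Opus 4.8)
The plan is to upgrade the object bijection of \cref{prp:laxopbijectioonobjects}, together with the extraction procedures spelled out in the paragraphs preceding \cref{def:morphismofdist}, into a strict $2$-functor
\[
  \Phi\colon \Laxop(\B,\Laxop(\C,\D)) \longrightarrow \Dist(\B,\C,\D)
\]
and to exhibit a strict inverse for it. Since composition and identities in both $2$-categories are computed componentwise, it suffices to check that $\Phi$ is a bijection on objects, on $1$-morphisms and on $2$-morphisms; this will automatically make it an isomorphism of $2$-categories.

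On objects $\Phi$ is the bijection of \cref{prp:laxopbijectioonobjects}. On a $1$-morphism $\widehat\theta\colon Q^1 \to Q^2$ put $\theta^B = \widehat\theta_B$ and let $\theta^C$ have components $(\theta^C)_B = (\widehat\theta_B)_C$ and $(\theta^C)_f = (\widehat\theta_f)_C$; on a $2$-morphism $\widehat\beth\colon\widehat\theta\to\widehat\zeta$ put $\beth^B = \widehat\beth_B$ and $\beth^C_B = (\widehat\beth_B)_C$. The discussion in this section already verifies that these data satisfy the defining conditions of a morphism, resp.\ $2$-morphism, of distributive laws: the oplax-transformation axioms for $\theta^B$ and $\theta^C$ are the $B$- and $C$-components of the oplax-transformation axioms for $\widehat\theta$, the identity $\theta^C_B = \theta^B_C$ is immediate, the Yang--Baxter equation is precisely the modification axiom for $\widehat\theta_f$ read off at a $1$-morphism $g$ of $\C$, and $\beth^C$ is a modification because the modification axiom for $\widehat\beth$ evaluated at an object $C$ is the modification axiom for $\beth^C$. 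Strict preservation of identities and of all three compositions is then routine bookkeeping, everything in sight being computed componentwise; for instance, unpacking the composition formula for oplax transformations shows that $(\widehat\zeta\widehat\theta)_f$ evaluated at $C$ is the $f$-component of the componentwise composite $\zeta^C\theta^C$ used in \cref{def:2catDist}, and the remaining cases are identical.

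For the inverse I would run the extraction in reverse. Given a morphism of distributive laws $\overline\theta$, set $\widehat\theta_B = \theta^B$ and define $\widehat\theta_f\colon \widehat\theta_{B_2}Q^1(f) \to Q^2(f)\widehat\theta_{B_1}$ by $(\widehat\theta_f)_C = (\theta^C)_f$; this is well typed because $(\widehat\theta_{B_2}Q^1(f))_C = \theta^C_{B_2}L^1_C(f)$ and $(Q^2(f)\widehat\theta_{B_1})_C = L^2_C(f)\theta^C_{B_1}$, and the Yang--Baxter equation is exactly the statement that the family $(\widehat\theta_f)_C$ obeys the modification axiom. The oplax-transformation axioms for $\theta^B$ and $\theta^C$ reassemble, componentwise, into those for $\widehat\theta$. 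A $2$-morphism $\overline\beth$ likewise yields a modification $\widehat\beth$ with $\widehat\beth_B = \beth^B$ and $(\widehat\beth_B)_C = \beth^C_B$. Since a modification between oplax transformations is nothing more than its family of $2$-morphism components subject to one axiom, and a $2$-cell of $\Laxop(\C,\D)$ is by definition such a modification, these constructions are visibly two-sided inverses to the ones of the previous paragraph, which finishes the proof.

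I do not expect a real obstacle. The only point requiring care --- and it is already navigated in the text above --- is the double role played by the components of $\widehat\theta$: the object-components $\widehat\theta_B$ are $1$-cells of $\Laxop(\C,\D)$, that is, oplax transformations, and they record both the oplax transformation $\theta^B$ and, at each $C$, the $B$-component of $\theta^C$, whereas the morphism-components $\widehat\theta_f$ are $2$-cells of $\Laxop(\C,\D)$, that is, modifications, and they record the $f$-components of $\theta^C$. Once the typing and the directions of these $2$-cells are pinned down, every coherence condition becomes a component-wise restatement of a condition already established, so nothing beyond the routine remains.
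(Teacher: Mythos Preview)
Your proposal is correct and follows essentially the same approach as the paper's own proof: both define $\Phi$ via the extraction procedures already described before \cref{def:morphismofdist}, construct the inverse by setting $\widehat\theta_B=\theta^B$ and $(\widehat\theta_f)_C=(\theta^C)_f$ (with the Yang--Baxter equation giving the modification axiom for $\widehat\theta_f$), and observe that all remaining checks are componentwise. The paper's proof is terser but structurally identical.
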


\begin{proof}
  We have shown above how to transform objects, 1-morphisms and 2-morphisms in $\Laxop(\B,\Laxop(\C,\D))$ into objects, 1-morphisms and 2-morphisms in $\Dist(\B,\C,\D)$. It is not hard to see that this assignment is 2-functorial.
  All that remains is to show that this process is invertible. We have already shown this on objects in \Cref{prp:laxopbijectioonobjects}.
  
  Let $\overline{\theta} \colon \sigma^1 \to \sigma^2$ be a morphism of distributive laws given by families $\theta^B\colon M^1(B) \to M^2(B)$ and $\theta^C\colon L^1(C) \to L^2(C)$ of oplax transformations. Let $Q^1, Q^2 \colon \B \to \Laxop(\C,\D)$ be the lax functors corresponding to $\sigma^1$ and $\sigma^2$ respectively as in \cref{prp:laxopbijectioonobjects}.
  
  We may define an oplax transformation $\widehat{\theta}\colon Q^1 \to Q^2$ by setting $\widehat{\theta}_B = \theta^B$ and taking the $C$ component of the modification $\widehat{\theta}_f$ to be $\theta^C_f$. As before, the Yang--Baxter equations satisfied by the families $\theta^B$ and $\theta^C$ are precisely what is needed to show $\widehat{\theta}_f$ is a modification.
  To see that $\widehat{\theta}$ is an oplax transformation it suffices to check that the necessary axioms hold componentwise, but these reduce to the oplax transformation axioms satisfied by $\theta^C$. This is easily seen to be inverse to the construction of $\overline{\theta}$ from $\widehat{\theta}$.
  
  Now suppose $\overline{\zeta}\colon \sigma^1 \to \sigma^2$ is another morphism of distributive laws and let $\overline{\beth}\colon \overline{\theta} \to \overline{\zeta}$ be a 2-morphism of distributive laws. We define $\widehat{\beth}\colon \widehat{\theta} \to \widehat{\zeta}$ by $\widehat{\beth}_B = \beth^B$. It is not hard to see that is a modification by working componentwise and using that $\beth^B_C = \beth^C_B$. It is again clear this is inverse the construction of $\overline{\beth}$ from $\widehat{\beth}$.
\end{proof}

Now that we have defined the 2-category $\Dist(\B,\C,\D)$, we would like to extend collation to a 2-functor $K\colon \Dist(\B,\C,\D) \to \Laxop(B \times C,D)$. To this end we describe the collation of 1-morphisms and 2-morphisms of distributive laws.

\begin{proposition}\label{prp:collate_oplax_transformations}
  Let $\sigma^1, \sigma^2 \in \Dist(\B, \C, \D)$ be distributive laws between families of lax functors $L^1$ and $M^1$ and between families $L^2$ and $M^2$ respectively and let $P^1$ and $P^2$ be the respective collated bifunctors. Let $\overline{\theta}\colon \sigma^1 \to \sigma^2$ be a morphism of distributive laws.
  
  Then we may define an oplax transformation $\theta\colon P^1 \to P^2$ by $\theta_{B,C} = \theta^B_C$ and with $\theta_{g,f}$ given by $M^2_B(g)\theta^C_f \circ \theta^B_g L^1_C(f)$ as depicted in the following string diagram.
  \\ \vspace{-3pt plus 1pt}
  \begin{center}
   \begin{tikzpicture}[scale=1.1,baseline={([yshift=-0.5ex]current bounding box.center)}]
    \begin{scope}[on layer=over]
    \path coordinate (dl) ++(0,-3.0) coordinate (bl);
    \path (dl) ++(1.0,0) coordinate (dr) ++(0,-3.0) coordinate (br);
    \path (dl) ++(-0.75,0) coordinate (tl) ++(0,-0.25) coordinate (tl2);
    \path (br) ++(0.75,0) coordinate (brr) ++(0,0.25) coordinate (rr);
    \end{scope}
    \draw[functorM] (dr) -- (br);
    \draw[functorL] (dl) -- (bl);
    \draw[cross line] (tl) -- (tl2) to[out=270, in=90] (rr) -- (brr);
    \coordinate (cornerNW) at ($(tl) + (-0.5,0)$);
    \coordinate (cornerSE) at ($(brr) + (0.5,0)$);
    \draw (cornerNW) rectangle (cornerSE);
   \end{tikzpicture}
  \end{center}
  
  Moreover, if we define $\theta_{B,-}\colon P^1(B,-) \to P^2(B,-)$ and $\theta_{-,C}\colon P^1(-,C) \to P^2(-,C)$ to be the oplax transformations obtained by restricting $P^1$ and $P^2$ in each component, then we have $\theta_{B,-}\circ\kappa^{1,B} = \kappa^{2,B}\theta^B $ and $\theta_{-,C}\circ\kappa^{1,C} = \kappa^{2,C}\theta^C$, where the `$\kappa$' maps are defined as in \cref{thm:lax_bifunctor}.
\end{proposition}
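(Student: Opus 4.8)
The plan is to verify directly that the stated data defines an oplax transformation $\theta\colon P^1 \to P^2$ by checking its three axioms in the string-diagram calculus used for \cref{thm:lax_bifunctor}, and then to establish the two $\kappa$-identities componentwise. Throughout I would use the explicit descriptions of the collated unitor ($\iota^B_C \ast \iota^C_B$) and compositor (the string diagram in \cref{thm:lax_bifunctor}) of $P^1$ and $P^2$, and unfold $\theta_{g,f}$ as the composite $(M^2_{B}(g)\ast\theta^C_f)\circ(\theta^B_g \ast L^1_C(f))$, where, for $f\colon B_1 \to B_2$ and $g\colon C_1 \to C_2$, the relevant instances are $\theta^{B_2}_g$ and $\theta^{C_1}_f$ and one uses $\theta^{B_2}_{C_1} = \theta^{C_1}_{B_2}$ to make the composite typecheck.

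I would first dispose of the two easy axioms. The $2$-dimensional naturality axiom for $\theta$ at $2$-morphisms $\alpha$ in $\B$ and $\beta$ in $\C$ splits, after inserting $P^i(\alpha,\beta) = M^i_{B_2}(\beta)\ast L^i_{C_1}(\alpha)$ and the definition of the $\theta_{g,f}$, into the $2$-dimensional naturality axioms of the oplax transformations $\theta^B$ (in the argument $\beta$) and $\theta^C$ (in the argument $\alpha$), recombined via the interchange law. Similarly, for the unit axiom the unitors of $P^1$ and $P^2$ expand as horizontal composites of unitors of the $M$'s and $L$'s, and the required equality reduces by interchange to the unit axioms of $\theta^B$ and $\theta^C$ individually.

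The substantial step is the `respecting composition' axiom. I would write out the corresponding string-diagram equality for $\theta$ at composable pairs $(f,g)$ and $(f',g')$, then substitute the collated compositors $\gamma^{P^1}$, $\gamma^{P^2}$ from \cref{thm:lax_bifunctor} together with the definitions of $\theta_{g,f}$ and $\theta_{g',f'}$. Both sides then become diagrams built only from the compositors $\gamma^{M^i}$, $\gamma^{L^i}$, the crossings $\sigma^1$, $\sigma^2$, and the crossings of $\theta^B$ and $\theta^C$. I would transform one side into the other using, in turn: the `respecting composition' axioms of $\theta^B$ and $\theta^C$ (to split and recombine the $\theta$-crossings against the $M$- and $L$-compositors); naturality of the compositors of $M^2$ and $L^1$; conditions \ref{eq:distcomp1} and \ref{eq:distcomp2} for $\sigma^1$ and $\sigma^2$, which let the $\sigma$-crossings pass the compositor dots; and the Yang--Baxter equation of \cref{def:morphismofdist}, which is exactly what permits the $\theta^B$- and $\theta^C$-crossings to be slid past the $\sigma$-crossings. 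I expect this diagram chase to be the main obstacle: there are four families of wires ($L^1,L^2,M^1,M^2$), together with the $\theta$-wires, to keep track of, and the order in which the various crossings and compositor nodes occur must be reorganised carefully. The Yang--Baxter condition is the only genuinely new ingredient; everything else is a formal consequence of axioms already available for lax functors and oplax transformations.

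Finally, for the $\kappa$-identities I would work componentwise. Both sides of $\theta_{B,-}\circ\kappa^{1,B} = \kappa^{2,B}\theta^B$ have $1$-morphism component $\theta^B_C$ at each $C$ (the $\kappa$'s have identity components and $\theta_{B,C} = \theta^B_C$), so it suffices to compare $2$-morphism components at each $g\colon C_1 \to C_2$. Using the composition rule for oplax transformations together with $\kappa^{1,B}_g = M^1_B(g)\iota^{1,C_1}_B$ and $\kappa^{2,B}_g = M^2_B(g)\iota^{2,C_1}_B$, and unfolding $(\theta_{B,-})_g = \theta_{g,\id_B}$, both sides reduce (after whiskering by $M^2_B(g)$ and an application of interchange) to the equation $\theta^{C_1}_{\id_B}\circ(\theta^{C_1}_B\ast\iota^{1,C_1}_B) = \iota^{2,C_1}_B\ast\theta^{C_1}_B$, which is precisely the unit axiom of the oplax transformation $\theta^{C_1}$. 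The identity $\theta_{-,C}\circ\kappa^{1,C} = \kappa^{2,C}\theta^C$ is symmetric, using the unit axiom of $\theta^B$.
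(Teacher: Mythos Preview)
Your approach is essentially the same as the paper's: verify the three oplax-transformation axioms for $\theta$ directly, with the unit and naturality axioms reducing immediately to the corresponding axioms for $\theta^B$ and $\theta^C$ via interchange, and the compositor axiom being the substantial step.

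One small comment on the compositor step: you list more ingredients than are actually needed. The paper's argument uses only the `respecting composition' axioms of $\theta^B$ and $\theta^C$ (to slide the $\theta$-crossing past the $M$- and $L$-compositor nodes) and the Yang--Baxter equation (to slide it past the $\sigma$-crossing, which simultaneously converts $\sigma^1$ into $\sigma^2$). You will not need \ref{eq:distcomp1}, \ref{eq:distcomp2}, or naturality of the $M^2$/$L^1$ compositors here --- those move $\sigma$ past compositor dots, but in this proof it is the $\theta$-wire, not the $\sigma$-wire, that is being transported through the $P$-compositor. When you actually draw the diagrams you will see the chase is shorter than you anticipate.

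On the $\kappa$-identities your analysis is in fact more explicit than the paper's, which simply invokes interchange; your reduction to the unit axiom of $\theta^{C_1}$ (respectively $\theta^{B_2}$) is correct and makes transparent what is going on.
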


\begin{proof}
  We must check that $\theta$ as defined above is an oplax transformation. The unit and naturality conditions follow easily, as $\theta^C$ and $\theta^B$ are individually are oplax transformations.
  \\ \vspace{-3pt plus 1pt}
  \begin{equation*}
   \begin{tikzpicture}[scale=1.0,baseline={([yshift=-0.5ex]current bounding box.center)}]
    \begin{scope}[on layer=over]
    \path coordinate (dl) ++(0,-2.75) coordinate[dot, functorL] (bl2) ++(0,-0.75) coordinate (bl);
    \path (dl) ++(1.0,0) coordinate (dr) ++(0,-2.75) coordinate[dot, functorM] (br2)  ++(0,-0.75) coordinate (br);
    \path (dl) ++(-0.75,0) coordinate (tl) ++(0,-0.25) coordinate (tl2);
    \path (br) ++(0.75,0) coordinate (brr) ++(0,0.5) coordinate (rr);
    \end{scope}
    \draw[functorM] (dr) -- (br2);
    \draw[functorL] (dl) -- (bl2);
    \draw[cross line] (tl) -- (tl2) to[out=270, in=90] (rr) -- (brr);
    \coordinate (cornerNW) at ($(tl) + (-0.5,0)$);
    \coordinate (cornerSE) at ($(brr) + (0.5,0)$);
    \draw (cornerNW) rectangle (cornerSE);
   \end{tikzpicture}
   \enspace=\enspace
   \begin{tikzpicture}[scale=1.0,baseline={([yshift=-0.5ex]current bounding box.center)}]
    \begin{scope}[on layer=over]
    \path coordinate (dl) ++(0,-1.125) coordinate[dot, functorL] (bl2) ++(0,-2.375) coordinate (bl);
    \path (dl) ++(1.0,0) coordinate (dr) ++(0,-1.125) coordinate[dot, functorM] (br2)  ++(0,-2.375) coordinate (br);
    \path (dl) ++(-0.75,0) coordinate (tl) ++(0,-0.5) coordinate (tl2);
    \path (br) ++(0.75,0) coordinate (brr) ++(0,0.25) coordinate (rr);
    \end{scope}
    \draw[functorM] (dr) -- (br2);
    \draw[functorL] (dl) -- (bl2);
    \draw[cross line] (tl) -- (tl2) to[out=270, in=90] (rr) -- (brr);
    \coordinate (cornerNW) at ($(tl) + (-0.5,0)$);
    \coordinate (cornerSE) at ($(brr) + (0.5,0)$);
    \draw (cornerNW) rectangle (cornerSE);
   \end{tikzpicture}
  \end{equation*}
  \begin{equation*}
   \begin{tikzpicture}[scale=1.0,baseline={([yshift=-0.5ex]current bounding box.center)}]
    \begin{scope}[on layer=over]
    \path coordinate (dl) ++(0,-2.75) coordinate[dot, functorL] (bl2) ++(0,-0.75) coordinate (bl);
    \path (dl) ++(1.0,0) coordinate (dr) ++(0,-2.75) coordinate[dot, functorM] (br2) ++(0,-0.75) coordinate (br);
    \path (dl) ++(-0.75,0) coordinate (tl) ++(0,-0.25) coordinate (tl2);
    \path (br) ++(0.75,0) coordinate (brr) ++(0,0.25) coordinate (rr);
    \end{scope}
    \draw[functorM] (dr) -- (br);
    \draw[functorL] (dl) -- (bl);
    \draw[cross line] (tl) -- (tl2) to[out=270, in=90] (rr) -- (brr);
    \coordinate (cornerNW) at ($(tl) + (-0.5,0)$);
    \coordinate (cornerSE) at ($(brr) + (0.5,0)$);
    \draw (cornerNW) rectangle (cornerSE);
   \end{tikzpicture}
   \enspace=\enspace
   \begin{tikzpicture}[scale=1.0,baseline={([yshift=-0.5ex]current bounding box.center)}]
    \begin{scope}[on layer=over]
    \path coordinate (dl) ++(0,-0.75) coordinate[dot, functorL] (bl2) ++(0,-2.75) coordinate (bl);
    \path (dl) ++(1.0,0) coordinate (dr) ++(0,-0.75) coordinate[dot, functorM] (br2) ++(0,-2.75) coordinate (br);
    \path (dl) ++(-0.75,0) coordinate (tl) ++(0,-0.25) coordinate (tl2);
    \path (br) ++(0.75,0) coordinate (brr) ++(0,0.25) coordinate (rr);
    \end{scope}
    \draw[functorM] (dr) -- (br);
    \draw[functorL] (dl) -- (bl);
    \draw[cross line] (tl) -- (tl2) to[out=270, in=90] (rr) -- (brr);
    \coordinate (cornerNW) at ($(tl) + (-0.5,0)$);
    \coordinate (cornerSE) at ($(brr) + (0.5,0)$);
    \draw (cornerNW) rectangle (cornerSE);
   \end{tikzpicture}
  \end{equation*}
  
  The compositor condition is proved using the Yang--Baxter equation.
  \\ \vspace{-3pt plus 1pt}
  \begin{align*}
   \begin{tikzpicture}[scale=0.8,baseline={([yshift=-0.5ex]current bounding box.center)}]
    \begin{scope}[on layer=over]
    \path coordinate[dot, functorM] (mu)
    +(0,1.0) coordinate (d)
    +(-0.75,-0.75) coordinate (mbl)
    +(0.75,-0.75) coordinate (mbr);
    \path (mbr) ++(0,-2.625) coordinate (br);
    \path (mbl) ++(-1.0,-1.0) coordinate (bl2) ++(0,-1.625) coordinate (bl);
    \path (mu) ++(-2.625,0) coordinate[dot, functorL] (muL)
    +(0,1.0) coordinate (dL)
    +(-0.75,-0.75) coordinate (mblL)
    +(0.75,-0.75) coordinate (mbrL);
    \path (mbrL) ++(1.0,-1.0) coordinate (brL2) ++(0,-1.625) coordinate (brL);
    \path (mblL) ++(0,-2.625) coordinate (blL);
    \coordinate (tl) at (dL -| blL);
    \path (tl) ++(-0.5,0) coordinate (tll) ++(0,-1.75) coordinate (tll2);
    \path (br) ++(0.5,0) coordinate (brr) ++(0,0.125) coordinate (brr2);
    \end{scope}
    \draw[functorM] (bl) -- (bl2) to[out=90, in=-90] (mbl) to[out=90, in=180] (mu.center) to[out=0, in=90] (mbr) -- (br)
                    (mu) -- (d);
    \draw[functorL, cross line] (blL) -- (mblL) to[out=90, in=180] (muL.center) to[out=0, in=90] (mbrL) to[out=-90, in=90] (brL2) -- (brL)
                                (muL) -- (dL);
    \draw[cross line] (tll) -- (tll2) to[out=270, in=90] (brr2) -- (brr);
    \coordinate (cornerNW) at ($(tll) + (-0.5,0)$);
    \coordinate (cornerSE) at ($(brr) + (0.5,0)$);
    \draw (cornerNW) rectangle (cornerSE);
   \end{tikzpicture}
   &\enspace=\enspace 
   \begin{tikzpicture}[scale=0.8,baseline={([yshift=-0.5ex]current bounding box.center)}]
    \begin{scope}[on layer=over]
    \path coordinate[dot, functorM] (mu)
    +(0,1.25) coordinate (d)
    +(-0.75,-0.75) coordinate (mbl)
    +(0.75,-0.75) coordinate (mbr);
    \path (mbr) ++(0,-2.375) coordinate (br);
    \path (mbl) ++(0,-0.5) coordinate (mbl2) ++(-1.0,-1.0) coordinate (bl2) ++(0,-0.875) coordinate (bl);
    \path (mu) ++(-2.625,0) coordinate[dot, functorL] (muL)
    +(0,1.25) coordinate (dL)
    +(-0.75,-0.75) coordinate (mblL)
    +(0.75,-0.75) coordinate (mbrL);
    \path (mbrL) ++ (0,-0.5) coordinate (mbrL2) ++(1.0,-1.0) coordinate (brL2) ++(0,-0.875) coordinate (brL);
    \path (mblL) ++(0,-2.375) coordinate (blL);
    \coordinate (tl) at (dL -| blL);
    \path (tl) ++(-0.5,0) coordinate (tll) ++(0,-1.0) coordinate (tll2);
    \path (br) ++(0.5,0) coordinate (brr) ++(0,0.875) coordinate (brr2);
    \end{scope}
    \draw[functorM] (bl) -- (bl2) to[out=90, in=-90] (mbl2) -- (mbl) to[out=90, in=180] (mu.center) to[out=0, in=90] (mbr) -- (br)
                    (mu) -- (d);
    \draw[functorL, cross line] (blL) -- (mblL) to[out=90, in=180] (muL.center) to[out=0, in=90] (mbrL) -- (mbrL2) to[out=-90, in=90] (brL2) -- (brL)
                               (muL) -- (dL);
    \draw[cross line] (tll) -- (tll2) to[out=270, in=90] (brr2) -- (brr);
    \coordinate (cornerNW) at ($(tll) + (-0.5,0)$);
    \coordinate (cornerSE) at ($(brr) + (0.5,0)$);
    \draw (cornerNW) rectangle (cornerSE);
   \end{tikzpicture}
   \\[5pt] =\enspace 
   \begin{tikzpicture}[scale=0.8,baseline={([yshift=-0.5ex]current bounding box.center)}]
    \begin{scope}[on layer=over]
    \path coordinate[dot, functorM] (mu)
    +(0,2.0) coordinate (d)
    +(-0.75,-0.75) coordinate (mbl)
    +(0.75,-0.75) coordinate (mbr);
    \path (mbr) ++(0,-1.625) coordinate (br);
    \path (mbl) ++(-1.0,-1.0) coordinate (bl2) ++(0,-0.625) coordinate (bl);
    \path (mu) ++(-2.625,0) coordinate[dot, functorL] (muL)
    +(0,2.0) coordinate (dL)
    +(-0.75,-0.75) coordinate (mblL)
    +(0.75,-0.75) coordinate (mbrL);
    \path (mbrL) ++(1.0,-1.0) coordinate (brL2) ++(0,-0.625) coordinate (brL);
    \path (mblL) ++(0,-1.625) coordinate (blL);
    \coordinate (tl) at (dL -| blL);
    \path (tl) ++(-0.5,0) coordinate (tll) ++(0,-0.125) coordinate (tll2);
    \path (br) ++(0.5,0) coordinate (brr) ++(0,1.75) coordinate (brr2);
    \end{scope}
    \draw[functorM] (bl) -- (bl2) to[out=90, in=-90] (mbl) to[out=90, in=180] (mu.center) to[out=0, in=90] (mbr) -- (br)
                    (mu) -- (d);
    \draw[functorL, cross line] (blL) -- (mblL) to[out=90, in=180] (muL.center) to[out=0, in=90] (mbrL) to[out=-90, in=90] (brL2) -- (brL)
                                (muL) -- (dL);
    \draw[cross line] (tll) -- (tll2) to[out=270, in=90] (brr2) -- (brr);
    \coordinate (cornerNW) at ($(tll) + (-0.5,0)$);
    \coordinate (cornerSE) at ($(brr) + (0.5,0)$);
    \draw (cornerNW) rectangle (cornerSE);
   \end{tikzpicture}
  \end{align*}
  
  Finally, we must show that $\theta_{B,-} \circ \kappa^{1,B} = \kappa^{2,B} \theta^B$ and $\theta_{-,C} \circ \kappa^{1,C} = \kappa^{2,C} \theta^C$.
  We immediately get agreement on 1-morphisms, since the 1-morphisms of each $\kappa$ are identities. They are seen to be equal on 2-morphisms by the interchange law.
\end{proof}

\begin{remark}
 Since there is a isomorphism between $\Laxop(\C\op,\D\op)\op$ and the 2-category $\mathrm{Lax}(\C,\D)$ of lax functors from $\C$ to $\D$ and \emph{lax} transformations we can conclude a similar result to \cref{prp:collate_oplax_transformations} for lax transformations by duality.
\end{remark}

The following proposition shows how to obtain modifications between oplax transformations of bifunctors by collation.
\begin{proposition}\label{prp:collate_modifications}
  Let $\sigma^1, \sigma^2 \in \Dist(\B, \C, \D)$ be distributive laws between families of lax functors $L^1$ and $M^1$ and $L^2$ and $M^2$ respectively. Let $\overline{\theta}$ and $\overline{\zeta}$ be morphisms of distributive laws from $\sigma^1$ to $\sigma^2$.
  Furthermore, let $P^1$ and $P^2$ be the collated bifunctors and $\theta$ and $\zeta$ the collated oplax transformations. Finally, let $\overline{\beth}\colon \overline{\theta} \to \overline{\zeta}$ be a 2-morphism of distributive laws. We may define a collated modification $\beth \colon \theta \to \zeta$ by $\beth_{B,C} = \beth^B_C = \beth^C_B$.
\end{proposition}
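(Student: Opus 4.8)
The plan is to verify the two things implicit in the statement: that the formula $\beth_{B,C} := \beth^B_C = \beth^C_B$ is well-posed and has the right type, and that the resulting family satisfies the modification axiom with respect to $\theta$ and $\zeta$. Well-posedness is immediate, since $\beth^B_C = \beth^C_B$ is exactly the compatibility condition built into the definition of a 2-morphism of distributive laws, and the type is correct because \cref{prp:collate_oplax_transformations} gives $\theta_{B,C} = \theta^B_C$ and $\zeta_{B,C} = \zeta^B_C$, so that $\beth^B_C\colon \theta^B_C \to \zeta^B_C$ is indeed a 2-morphism $\theta_{B,C} \to \zeta_{B,C}$.

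For the modification axiom I would work directly from the explicit formula for the collated compositor-components supplied by \cref{prp:collate_oplax_transformations}. Fixing a 1-morphism $(f,g)\colon (B_1,C_1) \to (B_2,C_2)$ in $\B \times \C$, the axiom to be checked is
\[ \zeta_{g,f} \circ (\beth_{B_2,C_2} \ast P^1(f,g)) \;=\; (P^2(f,g) \ast \beth_{B_1,C_1}) \circ \theta_{g,f}, \]
where $\theta_{g,f} = (M^2_{B_2}(g) \ast \theta^{C_1}_f) \circ (\theta^{B_2}_g \ast L^1_{C_1}(f))$ and similarly for $\zeta$. Starting from the left-hand side, I would first whisker by $L^1_{C_1}(f)$ and apply the modification axiom for $\beth^{B_2}\colon \theta^{B_2} \to \zeta^{B_2}$ at the 1-morphism $g$ to rewrite $\zeta^{B_2}_g \circ (\beth^{B_2}_{C_2} \ast M^1_{B_2}(g))$ as $(M^2_{B_2}(g) \ast \beth^{B_2}_{C_1}) \circ \theta^{B_2}_g$; then, using $\beth^{B_2}_{C_1} = \beth^{C_1}_{B_2}$, whisker by $M^2_{B_2}(g)$ and apply the modification axiom for $\beth^{C_1}\colon \theta^{C_1} \to \zeta^{C_1}$ at $f$ to rewrite $\zeta^{C_1}_f \circ (\beth^{C_1}_{B_2} \ast L^1_{C_1}(f))$ as $(L^2_{C_1}(f) \ast \beth^{C_1}_{B_1}) \circ \theta^{C_1}_f$. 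Rearranging the remaining composite by the interchange law and recognising $M^2_{B_2}(g)L^2_{C_1}(f) = P^2(f,g)$ together with $\beth^{C_1}_{B_1} = \beth_{B_1,C_1}$ yields the right-hand side. Pictorially this is just the statement that the $\beth$ dot, placed above the crossing that defines $\theta_{g,f}$, can be slid downwards first past the blue ($M$) wire and then past the red ($L$) wire, and it can of course be displayed as a short chain of string diagrams in the style of the preceding proofs.

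I do not expect a genuine obstacle here: once collation of 1-morphisms has been established, collation of 2-morphisms is essentially forced, and the verification uses nothing beyond the two component modification axioms, the interchange law, and the identity $\beth^B_C = \beth^C_B$. The only point deserving attention is the bookkeeping: one must notice that it is precisely the equality of the intermediate component $\beth^{B_2}_{C_1}$ with $\beth^{C_1}_{B_2}$ that licenses passing from the modification axiom for the $M$-family to that for the $L$-family. Finally, that this assignment is compatible with vertical and horizontal composition of 2-morphisms of distributive laws is immediate, since everything is defined componentwise, so collation indeed extends to the desired 2-functor.
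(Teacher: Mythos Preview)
Your proposal is correct and follows essentially the same approach as the paper: the paper's proof is a three-step string-diagram chain in which the $\beth$ dot slides along the $\theta/\zeta$ wire past the two crossings, using the modification axiom for $\beth^{B}$ for one step and that for $\beth^{C}$ for the other, with the identification $\beth^{B}_{C} = \beth^{C}_{B}$ bridging the two. Your algebraic write-up unpacks exactly this, correctly isolating the intermediate component $\beth^{B_2}_{C_1} = \beth^{C_1}_{B_2}$ as the pivot; the only (harmless) slip is in the informal pictorial summary, where the order ``blue then red'' matches sliding \emph{upwards} from the bottom-right rather than downwards from the top.
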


\begin{proof}
  To see that $\beth$ so defined is a modification, consider the following series of string diagrams.
  \\ \vspace{-3pt plus 1pt}
  \begin{equation*}
   \begin{tikzpicture}[scale=1.0,baseline={([yshift=-0.5ex]current bounding box.center)}]
    \begin{scope}[on layer=over]
    \path coordinate (dl) ++(0,-3.5) coordinate (bl);
    \path (dl) ++(1.0,0) coordinate (dr) ++(0,-3.5) coordinate (br);
    \path (dl) ++(-0.75,0) coordinate (tl) ++(0,-0.5) coordinate (tl2);
    \path (br) ++(0.75,0) coordinate (brr) ++(0,0.5) coordinate[dot] (rr);
    \coordinate (mid) at ($(rr)!0.5!(tl2)$);
    \end{scope}
    \draw[functorM] (dr) -- (br);
    \draw[functorL] (dl) -- (bl);
    \draw[cross line] (tl) -- (tl2) to[out=270, in=90] (rr) -- (brr);
    \coordinate (cornerNW) at ($(tl) + (-0.5,0)$);
    \coordinate (cornerSE) at ($(brr) + (0.5,0)$);
    \draw (cornerNW) rectangle (cornerSE);
   \end{tikzpicture}
   \enspace=\enspace
   \begin{tikzpicture}[scale=1.0,baseline={([yshift=-0.5ex]current bounding box.center)}]
    \begin{scope}[on layer=over]
    \path coordinate (dl) ++(0,-3.5) coordinate (bl);
    \path (dl) ++(1.0,0) coordinate (dr) ++(0,-3.5) coordinate (br);
    \path (dl) ++(-0.75,0) coordinate (tl) ++(0,-0.5) coordinate (tl2);
    \path (br) ++(0.75,0) coordinate (brr) ++(0,0.5) coordinate (rr);
    \coordinate[dot] (mid) at ($(rr)!0.5!(tl2)$);
    \end{scope}
    \draw[functorM] (dr) -- (br);
    \draw[functorL] (dl) -- (bl);
    \draw[cross line] (tl) -- (tl2) to[out=270, in=90] (rr) -- (brr);
    \coordinate (cornerNW) at ($(tl) + (-0.5,0)$);
    \coordinate (cornerSE) at ($(brr) + (0.5,0)$);
    \draw (cornerNW) rectangle (cornerSE);
   \end{tikzpicture}
   \enspace=\enspace
   \begin{tikzpicture}[scale=1.0,baseline={([yshift=-0.5ex]current bounding box.center)}]
    \begin{scope}[on layer=over]
    \path coordinate (dl) ++(0,-3.5) coordinate (bl);
    \path (dl) ++(1.0,0) coordinate (dr) ++(0,-3.5) coordinate (br);
    \path (dl) ++(-0.75,0) coordinate (tl) ++(0,-0.5) coordinate[dot] (tl2);
    \path (br) ++(0.75,0) coordinate (brr) ++(0,0.5) coordinate (rr);
    \coordinate (mid) at ($(rr)!0.5!(tl2)$);
    \end{scope}
    \draw[functorM] (dr) -- (br);
    \draw[functorL] (dl) -- (bl);
    \draw[cross line] (tl) -- (tl2) to[out=270, in=90] (rr) -- (brr);
    \coordinate (cornerNW) at ($(tl) + (-0.5,0)$);
    \coordinate (cornerSE) at ($(brr) + (0.5,0)$);
    \draw (cornerNW) rectangle (cornerSE);
   \end{tikzpicture}
  \end{equation*}
  To move from the second diagram to the third, we make use of the fact that $\beth^B_C = \beth^C_B$.
\end{proof}

We can now upgrade the process of collation into a (strict) $2$-functor from $\Dist(\B,\C,\D)$ to $\Laxop(\B \times \C, \D)$.

\begin{theorem}\label{thm:collation_functor}
  Collation as in \cref{thm:lax_bifunctor,prp:collate_oplax_transformations,prp:collate_modifications} defines a strict $2$-functor $K\colon \Dist(\mathcal{B}, \mathcal{C}, \mathcal{D}) \to \Laxop(\mathcal{B}\times \mathcal{C}, \mathcal{D})$. 
\end{theorem}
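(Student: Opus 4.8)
The action of $K$ on objects, $1$-morphisms and $2$-morphisms has already been pinned down: on objects by \cref{thm:lax_bifunctor}, on $1$-morphisms by \cref{prp:collate_oplax_transformations} and on $2$-morphisms by \cref{prp:collate_modifications}. Each of those results also checks that the output has the right type (a lax functor, an oplax transformation, a modification) with the prescribed source and target, so what remains is to verify the strict $2$-functor axioms: preservation of identity $1$-morphisms, of composition of $1$-morphisms, of identity $2$-morphisms, and of both vertical and horizontal composition of $2$-morphisms. The organising principle is that everything in sight is computed componentwise: composition and identities in $\Dist(\B,\C,\D)$ by \cref{def:2catDist}, those in $\Laxop(\B\times\C,\D)$ by the pointwise definitions of composition of oplax transformations and of modifications, and the collation formulas from \cref{prp:collate_oplax_transformations,prp:collate_modifications} are expressed purely in terms of this component data.

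I would first dispatch the routine axioms. For identity $1$-morphisms, the identity endomorphism of a distributive law $\sigma$ is the pair of identity oplax transformations $\theta^B = \id_{M_B}$, $\theta^C = \id_{L_C}$, whose $2$-morphism components are all identities; substituting these into the formula for $\theta_{g,f}$ from \cref{prp:collate_oplax_transformations} and using $\theta_{B,C} = \theta^B_C$ yields the identity oplax transformation on the collated bifunctor $P$. For $2$-morphisms, $K$ sends $\overline{\beth}$ to the modification with components $\beth_{B,C} = \beth^B_C = \beth^C_B$; since vertical and horizontal composition of modifications (and of $2$-morphisms of distributive laws) are defined componentwise, and the identity $2$-morphism of $\overline{\theta}$ has components $\id_{\theta^B}$, $\id_{\theta^C}$, preservation of identity $2$-morphisms and of vertical and horizontal composition follows immediately.

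The step I expect to be the main obstacle is preservation of composition of $1$-morphisms. Given morphisms of distributive laws $\overline{\theta}\colon\sigma^1\to\sigma^2$ and $\overline{\psi}\colon\sigma^2\to\sigma^3$, I would compare $K(\overline{\psi}\,\overline{\theta})$ with $K(\overline{\psi})\,K(\overline{\theta})$. The $1$-morphism components agree at once (both are $\psi^B_C\theta^B_C$ at $(B,C)$). For the $2$-morphism component at a $1$-morphism $(f,g)$ of $\B\times\C$ one expands each side: the left-hand side via the collation formula of \cref{prp:collate_oplax_transformations} applied to the componentwise composites $\psi^B\theta^B$ and $\psi^C\theta^C$, unravelled using the rule $(\rho^2\rho^1)_h = \rho^2_h\rho^1_X\circ\rho^2_Y\rho^1_h$ for composing oplax transformations; the right-hand side via that same composition rule applied to $\theta = K(\overline{\theta})$ and $\psi = K(\overline{\psi})$, with each $\theta_{g,f}$ and $\psi_{g,f}$ then expanded by the collation formula. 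What is left is a finite string-diagram manipulation that rearranges the whiskerings using only the interchange law together with the compatibility identities $\theta^B_C = \theta^C_B$ and $\psi^B_C = \psi^C_B$; I would present it as a short chain of string diagrams in the style of the proofs of \cref{prp:collate_oplax_transformations,prp:collate_modifications}. No new idea is needed; the cost is purely bookkeeping, tracking which $L^i$ or $M^i$ each component is whiskered against.

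Finally, I would note in passing that $K$ factors as the isomorphism $\Dist(\B,\C,\D)\cong\Laxop(\B,\Laxop(\C,\D))$ of \cref{thm:equivalence_between_dist_and_iterated_functor_cat} followed by an ``uncurrying'' that sends a lax functor $Q\colon\B\to\Laxop(\C,\D)$ to the lax functor $(B,C)\mapsto Q(B)(C)$, $(f,g)\mapsto Q(B_2)(g)\circ Q(f)_{C_1}$ for $f\colon B_1\to B_2$ and $g\colon C_1\to C_2$; proving this uncurrying is a strict $2$-functor is the same computation in different notation, so I would simply carry out the componentwise verification above rather than take the detour.
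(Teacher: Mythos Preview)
Your proposal is correct and follows exactly the approach the paper takes: the paper's proof consists of the single sentence ``Inspecting the collation constructions and using that composition in $\Dist(\mathcal{B}, \mathcal{C}, \mathcal{D})$ is computed componentwise, it is easy to see that $K$ strictly preserves identities and composition of 1-morphisms, as well as vertical and horizontal composition of 2-morphisms,'' and you have simply (and accurately) unpacked what ``easy to see'' means, correctly isolating the composition-of-$1$-morphisms check as the only place requiring a moment's thought and correctly observing that it reduces to a single application of interchange.
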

\begin{proof}
   Inspecting the collation constructions and using that composition in $\Dist(\mathcal{B}, \mathcal{C}, \mathcal{D})$ is computed componentwise, it is easy to see that $K$ strictly preserves identities and composition of 1-morphisms, as well as vertical and horizontal composition of 2-morphisms.
\end{proof}

\begin{remark}
These ideas might also be generalised to $n$-ary functors. For instance, a `ternary functor theorem' would involve data which could be related to $\Laxop(\B,\Laxop(\C,\Laxop(\D,\mathcal{E})))$ and compared to the iterated distributive laws of \cite{cheng2011iterated}.
\end{remark}

\Cref{thm:equivalence_between_dist_and_iterated_functor_cat} allows us to compare the collation functor defined above and the `uncurrying' functor $J\colon \Laxop(\B,\Laxop(\C,\D)) \to \Laxop(\B \times \C,\D)$ defined implicitly in \cite{nikolic2018strictification}. We have the following commutative diagram.

\begin{center}
   \begin{tikzpicture}[node distance=2.5cm, auto]
    \node (A) {$\Dist(\B,\C,\D)$};
    \node (B) [above=1.5cm of A] {$\Laxop(\B,\Laxop(\C,\D))$};
    \node (C) [right=1.5cm of A] {$\Laxop(\B \times \C,\D)$};
    \draw[->] (A) to node[sloped] {$\sim$} (B);
    \draw[->] (A) to node [swap]{$K$} (C);
    \draw[->] (B) to node {$J$} (C);
   \end{tikzpicture}
\end{center}
In other words, collation of distributive laws is essentially uncurrying of lax bifunctors.

Since collation is not invertible in general, we have no general \emph{currying} functor. In the next section we consider how restricting to a certain full subcategory gives a setting in which such an inverse does exist. In particular, by restricting to pseudofunctors we obtain the exponential adjunction $\Hom(\B \times \C,\D) \cong \Hom(\B,\Hom(\C,\D))$.

\section{A partial converse to the 2-dimensional bifunctor theorem}\label{sec:converse}

Let us consider for which restricted class of lax functors we might reverse the collation procedure.

First observe that the canonical maps $\kappa^B\colon M_B \to P(B,-)$ and $\kappa^C\colon L_C \to P(-,C)$ from \cref{thm:lax_bifunctor} are invertible whenever all the supplied lax functors are unitary.
Hence, this gives a natural setting in which we can recover the input data from the collated lax bifunctor.
Also note that under these conditions the collated lax bifunctor is itself unitary. In fact, the collated lax bifunctor also satisfies the following further condition.

\begin{definition}
 We say a unitary lax bifunctor $(P, \gamma, \iota)$ is \emph{decomposable} if the compositors of the form $\gamma_{(\id,g),(f,\id)}\colon P(\id,g) P(f,\id) \to P(f,g)$ are invertible for all 1-morphisms $f$ and $g$.
 
 Note that in particular all pseudofunctors are decomposable.
\end{definition}

\begin{lemma}\label{prp:unitary_collation_decomposable}
 The collation of a distributive law between unitary lax bifunctors is a decomposable unitary lax bifunctor. Explicitly, the inverse to $\gamma_{(\id,g),(f,\id)}$ is given by $(M_{B_2}(g) \iota^{C_1}_{B_2}) \ast (\iota^{B_2}_{C_1} L_{C_1}(f))$.
\end{lemma}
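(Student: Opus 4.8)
The unitary part is immediate: by \cref{thm:lax_bifunctor} the unitor of the collated bifunctor at $(B,C)$ is $\iota^B_C \ast \iota^C_B$, which is a horizontal composite of isomorphisms whenever $M_B$ and $L_C$ are unitary. So the real content is decomposability, and the plan is to exhibit the $2$-morphism in the statement as a genuine inverse.

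First I would unwind the mixed compositor from the string diagram in \cref{thm:lax_bifunctor}: specialising the formula for $\gamma_{(f',g'),(f,g)}$ to $f' = \id_{B_2}$, $g' = g$ and $g = \id_{C_1}$, the crossing becomes $\sigma_{\id_{B_2},\id_{C_1}}$ and the two compositor vertices become $\gamma^{B_2}_{g,\id_{C_1}}$ and $\gamma^{C_1}_{\id_{B_2},f}$, so that
\[
\gamma_{(\id_{B_2},g),(f,\id_{C_1})} = \bigl(\gamma^{B_2}_{g,\id_{C_1}} \ast \gamma^{C_1}_{\id_{B_2},f}\bigr)\circ\bigl(M_{B_2}(g)\ast\sigma_{\id_{B_2},\id_{C_1}}\ast L_{C_1}(f)\bigr).
\]
I would then write $\theta = \bigl(M_{B_2}(g)\iota^{C_1}_{B_2}\bigr)\ast\bigl(\iota^{B_2}_{C_1}L_{C_1}(f)\bigr) = \kappa^{B_2}_g \ast \kappa^{C_1}_f$ for the candidate inverse, a $2$-morphism $P(f,g)\to P(\id_{B_2},g)P(f,\id_{C_1})$, and observe that $\theta$ is invertible precisely because the unitors $\iota^{C_1}_{B_2}$ and $\iota^{B_2}_{C_1}$ are (using that $L_{C_1}$ and $M_{B_2}$ are unitary).

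The heart of the argument is the computation that $\gamma_{(\id_{B_2},g),(f,\id_{C_1})}\circ\theta = \id_{P(f,g)}$, which I would present as a short chain of string diagrams. The mechanism is that $\kappa^{B_2}_g$ inserts $L_{C_1}(\id_{B_2})$ (via $\iota^{C_1}_{B_2}$) and $\kappa^{C_1}_f$ inserts $M_{B_2}(\id_{C_1})$ (via $\iota^{B_2}_{C_1}$) exactly in the two positions on which $\sigma_{\id_{B_2},\id_{C_1}}$ acts; condition \ref{eq:distunit2}, applied to the identity $1$-morphisms $\id_{B_2}$ and $\id_{C_1}$, then rewrites $\sigma_{\id_{B_2},\id_{C_1}}\circ(\iota^{C_1}_{B_2}\ast M_{B_2}(\id_{C_1}))$ as the whiskered unitor $M_{B_2}(\id_{C_1})\ast\iota^{C_1}_{B_2}$. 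After this substitution the left unit axiom for $L_{C_1}$ collapses $\gamma^{C_1}_{\id_{B_2},f}$ against $\iota^{C_1}_{B_2}$ and the right unit axiom for $M_{B_2}$ collapses $\gamma^{B_2}_{g,\id_{C_1}}$ against $\iota^{B_2}_{C_1}$, the interchange law allowing one to move the whiskerings around, and what remains is $\id_{M_{B_2}(g)}\ast\id_{L_{C_1}(f)} = \id_{P(f,g)}$.

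Since $\theta$ is invertible and $\gamma_{(\id_{B_2},g),(f,\id_{C_1})}\circ\theta = \id$, it follows that $\gamma_{(\id_{B_2},g),(f,\id_{C_1})}$ is an isomorphism with inverse $\theta$, which is the stated formula; in particular the collated bifunctor is decomposable. I expect the one genuinely delicate step to be the first — correctly reading the mixed compositor, and especially its whiskerings, off the general string diagram of \cref{thm:lax_bifunctor}; everything afterwards is routine bookkeeping driven by \ref{eq:distunit2} and the unit axioms of the component lax functors.
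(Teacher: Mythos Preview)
Your proposal is correct and follows essentially the same route as the paper: show that $\gamma_{(\id,g),(f,\id)}\circ\theta=\id$ by passing the crossing $\sigma_{\id_{B_2},\id_{C_1}}$ through a unitor (the paper leaves implicit which of \ref{eq:distunit1}/\ref{eq:distunit2} is used, while you name \ref{eq:distunit2}) and then collapsing the two remaining compositor--unitor pairs via the unit axioms of $L_{C_1}$ and $M_{B_2}$; since $\theta$ is invertible, $\gamma_{(\id,g),(f,\id)}$ is too. Your explicit unwinding of the specialised compositor is accurate and is exactly the string diagram the paper draws.
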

\begin{proof}
  The collated lax bifunctor is certainly unitary.
  Now consider the following equality.
  \\ \vspace{-3pt plus 1pt}
  \begin{align*}
   \begin{tikzpicture}[scale=1.0,baseline={([yshift=-0.5ex]current bounding box.center)}]
    \begin{scope}[on layer=over]
    \path coordinate[dot, functorM] (mu)
    +(0,0.625) coordinate (d)
    +(-0.75,-0.75) coordinate (mbl)
    +(0.75,-0.75) coordinate (mbr);
    \path (mbr) ++(0,-2.0) coordinate (br);
    \path (mbl) ++(-0.75,-1.0) coordinate (bl2) ++(0,-0.5) coordinate[dot, functorM] (bl1) ++(0,-0.5) coordinate (bl);
    \path (mu) ++(-2.25,0) coordinate[dot, functorL] (muL)
    +(0,0.625) coordinate (dL)
    +(-0.75,-0.75) coordinate (mblL)
    +(0.75,-0.75) coordinate (mbrL);
    \path (mbrL) ++(0.75,-1.0) coordinate (brL2) ++(0,-0.5) coordinate[dot, functorL] (brL);
    \path (mblL) ++(0,-2.0) coordinate (blL);
    \end{scope}
    \draw[functorM] (bl1) -- (bl2) to[out=90, in=-90] (mbl) to[out=90, in=180] (mu.center) to[out=0, in=90] (mbr) -- (br)
                    (mu) -- (d);
    \draw[functorL, cross line] (blL) -- (mblL) to[out=90, in=180] (muL.center) to[out=0, in=90] (mbrL) to[out=-90, in=90] (brL2) -- (brL)
                    (muL) -- (dL);
    \coordinate (tl) at (dL -| blL);
    \coordinate (cornerNW) at ($(tl) + (-0.5,0)$);
    \coordinate (cornerSE) at ($(br) + (0.5,0)$);
    \draw (cornerNW) rectangle (cornerSE);
   \end{tikzpicture}
   &\enspace=\enspace
   \begin{tikzpicture}[scale=1.0,baseline={([yshift=-0.5ex]current bounding box.center)}]
    \begin{scope}[on layer=over]
    \path coordinate (mu)
    +(0,0.375) coordinate (d);
    \path (mu) (0,-3.0) coordinate (bl);
    \path (d) ++(-0.75,0) coordinate (dL) ++ (0,-1.125) coordinate (muL);
    \path (muL) ++(0,-2.25) coordinate (blL);
    \end{scope}
    \draw[functorM] (bl) -- (mu) -- (d);
    \draw[functorL, cross line] (blL) to[out=90, in=-90] (muL) -- (dL);
    \coordinate (tl) at (dL);
    \coordinate (bbr) at (bl -| mu);
    \coordinate (cornerNW) at ($(tl) + (-0.5,0)$);
    \coordinate (cornerSE) at ($(bbr) + (0.5,0)$);
    \draw (cornerNW) rectangle (cornerSE);
   \end{tikzpicture}
  \end{align*}
  
  Thus, for each $f\colon B_1 \to B_2$ and $g\colon C_1 \to C_2$ the compositor $\gamma_{(\id,g),(f,\id)}$ has a right inverse given by $(M_{B_2}(g) \iota^{C_1}_{B_2}) \ast (\iota^{B_2}_{C_1} L_{C_1}(f))$, or equivalently, $\kappa^{B_2}_g \ast \kappa^{C_1}_f$ recalling the `$\kappa$' maps from \cref{thm:lax_bifunctor}.
  But the unitors $\iota^{C_1}_{B_2}$ and $\iota^{B_2}_{C_1}$ are invertible by assumption and hence so is $\gamma_{(\id,g),(f,\id)}$.
\end{proof}

We are now able to show the following equivalence.

\begin{theorem}[Unitary lax bifunctor theorem]\label{thm:normal_lax_bifunctor}
  The collation 2-functor $K\colon \Dist(\B,\C,\D) \to \Laxop(\B \times \C,\D)$ restricts to an equivalence of $2$-categories $\overline{K}$ between the full sub-2-category of distributive laws between families of unitary lax functors and the full sub-2-category of decomposable unitary lax bifunctors.
  
  The `inverse' to $\overline{K}$ can be given explicitly by a 2-functor $T$ defined as follows.
  \begin{itemize}
      \item If $(P', \gamma', \iota')$ be a unitary decomposable lax bifunctor, then $T(P')$ is the distributive law given by $\sigma_{f,g} = \gamma^{\prime\, -1}_{(\id,g),(f,\id)} \gamma'_{(f,\id),(\id,g)}$ between families of lax functors with $L_C = P'(-,C)$ and $M_B = P'(B,-)$.
      \item If $\theta'\colon P'^1 \to P'^2$ is an oplax transformation between unitary decomposable lax bifunctors, then $T(\theta')$ is the morphism of distributive laws given by $(\theta'_{-,C})_C$ and $(\theta'_{B,-})_B$.
      \item If $\beth' \colon \theta' \to \zeta'$ is a modification between oplax transformations of unitary decomposable lax bifunctors, then $T(\beth')$ is given by the families $(\beth'_{-,C})_C$ and $(\beth'_{B,-})_B$.
  \end{itemize}
\end{theorem}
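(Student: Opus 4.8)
The plan has three parts: show that $K$ restricts to a well-defined 2-functor $\overline K$ onto the sub-2-category of decomposable unitary lax bifunctors; show that $T$ is a well-defined strict 2-functor into the sub-2-category of distributive laws between unitary lax functors; and exhibit invertible natural transformations $\overline K\circ T\cong\Id$ and $T\circ\overline K\cong\Id$, built respectively from the decomposability compositors and from the canonical $\kappa$-icons of \cref{thm:lax_bifunctor}. The first part is immediate: by \cref{prp:unitary_collation_decomposable} collation sends a distributive law between unitary lax functors to a decomposable unitary lax bifunctor, and since both sub-2-categories are \emph{full}, nothing further is needed on 1- and 2-cells.

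For $T$ on objects, let $(P',\gamma',\iota')$ be decomposable unitary. Then $L_C=P'(-,C)$ and $M_B=P'(B,-)$ are unitary lax functors: their compositors and unitors are restrictions of $\gamma'$ and $\iota'$, unitarity is inherited, and $L_C(B)=P'(B,C)=M_B(C)$; moreover $\sigma_{f,g}=\gamma^{\prime\,-1}_{(\id,g),(f,\id)}\,\gamma'_{(f,\id),(\id,g)}$ is a well-typed 2-morphism $L_{C_2}(f)M_{B_1}(g)\to M_{B_2}(g)L_{C_1}(f)$, which is legitimate because $\gamma'_{(\id,g),(f,\id)}$ is invertible by decomposability. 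The substantive step is to verify conditions \ref{eq:distcomp1}--\ref{eq:distnat2}, and each reduces to a single axiom of the lax functor $P'$: \ref{eq:distcomp1} and \ref{eq:distcomp2} come from the associativity of $\gamma'$ for the triples $((\id,g_2),(\id,g_1),(f,\id))$ and $((f_2,\id),(f_1,\id),(\id,g))$; \ref{eq:distunit1} and \ref{eq:distunit2} from the unit axioms; and \ref{eq:distnat1} and \ref{eq:distnat2} from the naturality of $\gamma'$ with respect to the 2-morphisms $(\id,\beta)$ and $(\alpha,\id)$; in each case one first rewrites the relevant compositor as its inverse. For $T$ on 1-morphisms, given an oplax transformation $\theta'\colon P'^1\to P'^2$ the restrictions $\theta'_{-,C}$ and $\theta'_{B,-}$ are oplax transformations $L_C^1\to L_C^2$ and $M_B^1\to M_B^2$ agreeing on objects (both equal $\theta'_{B,C}$ at $(B,C)$), and the Yang--Baxter equation for the pair is obtained by expanding the composition axiom of $\theta'$ along the two factorisations $(f,g)=(\id,g)\circ(f,\id)=(f,\id)\circ(\id,g)$ in $\B\times\C$ and using decomposability. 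For $T$ on 2-morphisms, the restrictions of a modification are modifications and again agree on objects. Finally $T$ is a strict 2-functor since it is defined by restriction and composition in $\Dist(\B,\C,\D)$ is computed componentwise.

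It remains to give the two natural isomorphisms. For $\overline K\circ T\cong\Id$: write $P''$ for $\overline K T(P')$; one reads off $P''(B,C)=P'(B,C)$, $P''(f,g)=P'(\id,g)P'(f,\id)$ and $P''(\alpha,\beta)=P'(\id,\beta)\ast P'(\alpha,\id)$, and the invertible 2-cells $\gamma'_{(\id,g),(f,\id)}\colon P''(f,g)\to P'(f,g)$ assemble into an invertible \emph{icon} $P''\to P'$ whose oplax-transformation axioms are exactly the naturality, associativity and unit axioms of $\gamma'$ (the naturality axiom supplying agreement on 2-cells); this icon is natural in $P'$. For $T\circ\overline K\cong\Id$: write $\tilde\sigma$ for $T\overline K(\sigma)$, where $P=K(\sigma)$; then $\tilde L_C=P(-,C)$ and $\tilde M_B=P(B,-)$, and the canonical invertible icons $\kappa^C\colon L_C\to P(-,C)$ and $\kappa^B\colon M_B\to P(B,-)$ of \cref{thm:lax_bifunctor} form a morphism of distributive laws $\sigma\to\tilde\sigma$: agreement on objects is automatic since all 1-morphism components are identities, and the Yang--Baxter equation holds because, by the formula for $P$'s compositor and the explicit inverse compositor $\kappa^{B_2}_g\ast\kappa^{C_1}_f$ from \cref{prp:unitary_collation_decomposable}, $\tilde\sigma_{f,g}$ is $\sigma_{f,g}$ conjugated by these $\kappa$-components, so the equation holds by the very definition of $\tilde\sigma$. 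This morphism is invertible and natural in $\sigma$. The four points together show $T$ is a pseudo-inverse to $\overline K$, giving the asserted equivalence.

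The main obstacle I anticipate is the verification of conditions \ref{eq:distcomp1}--\ref{eq:distnat2} for the $\sigma$ produced by $T$: although each reduces to exactly one axiom of $P'$, carrying the (de)composition compositors and their inverses through is delicate and is cleanest handled with string diagrams in the style of the proofs of \cref{thm:lax_bifunctor} and \cref{prp:unitary_collation_decomposable}. (Conceptually, via the isomorphism of \cref{thm:equivalence_between_dist_and_iterated_functor_cat} and the triangle with uncurrying, $T$ corresponds to \emph{currying} a decomposable unitary lax bifunctor, and the stated equivalence restricts the exponential-style adjunction to these sub-2-categories.) Everything else is routine componentwise bookkeeping.
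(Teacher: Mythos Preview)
Your proposal is correct and follows essentially the same route as the paper: restrict $K$ via \cref{prp:unitary_collation_decomposable}, verify $T$ is well-defined by reducing the distributive-law axioms to the lax-functor axioms of $P'$, and build the two natural isomorphisms from $\gamma'_{(\id,g),(f,\id)}$ and from the $\kappa$-icons of \cref{thm:lax_bifunctor}. One small caveat: your claim that each of \ref{eq:distcomp1}--\ref{eq:distnat2} ``reduces to a single axiom'' with a single named associativity triple is an underestimate---the paper's proof of \ref{eq:distcomp1}, for instance, post-composes with $\gamma'_{(\id,g),(f,\id)}$ and then chains \emph{several} associativity moves together with cancellations of invertible compositors, and similarly your Yang--Baxter verification for $T(\theta')$ will need the compositor axiom of $\theta'$ together with cancellations rather than a bare comparison of two factorisations; but you already flag this as the delicate step, and the strategy is right.
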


\begin{proof}
  The codomain of $\overline{K}$ restricts as appropriate by \cref{prp:unitary_collation_decomposable}.
  
  Let us demonstrate that the putative inverse $T$ is well-defined. We begin by showing that $T(P')$ satisfies the axioms of a distributive law.
  
  The form of condition \ref{eq:distcomp1} that we need to prove can be written as follows, where we use red and blue for $L$ and $M$ as before, but purple for $P'$ more generally.
  \\ \vspace{-3pt plus 1pt}
  \begin{equation*}
   \begin{tikzpicture}[scale=1.0,baseline={([yshift=-0.5ex]current bounding box.center)}]
    \begin{scope}[on layer=over]
    \path coordinate[dot, functorM] (mu)
    +(0,1.25) coordinate (d)
    +(-1,-1) coordinate[dot, functorF2] (mbl)
    +(1,-1) coordinate (mbr);
    \path (d -| mbl) ++(-1.0,0) coordinate (tl) ++(0,-1.25) coordinate (tl2);
    \path (mbr) ++(0,-0.75) coordinate[dot, functorF2] (r) ++(0,-0.375) coordinate[dot, functorF2] (r2) ++(0,-0.625) coordinate (br) ++(0.75,0) coordinate (brr) ++(0,0.25) coordinate (brr2);
    \path (mbl) ++(0,-0.375) coordinate[dot, functorF2] (l) ++(0,-1.375) coordinate (bl);
    \end{scope}
    \draw[functorM] (bl) -- (l)
                    (mbl) to[out=90, in=180] (mu.center) to[out=0, in=90] (mbr) -- (r)
                    (r2) -- (br)
                    (mu) -- (d);
    \draw[functorL] (tl) -- (tl2) to[out=-90, in=180] (mbl)
                    (l) to[out=0, in=180] (r)
                    (r2) to[out=0, in=90] (brr);
    \draw[functorF2] (mbl) -- (l)
                     (r) -- (r2);
    \coordinate (cornerNW) at ($(tl) + (-0.5,0)$);
    \coordinate (cornerSE) at ($(brr) + (0.5,0)$);
    \draw (cornerNW) rectangle (cornerSE);
   \end{tikzpicture}
   \enspace =\enspace
   \begin{tikzpicture}[scale=1.0,baseline={([yshift=-0.5ex]current bounding box.center)}]
    \begin{scope}[on layer=over]
    \path coordinate[dot, functorM] (mu)
    +(-1,-1) coordinate (mbl)
    +(1,-1) coordinate (mbr)
    ++(0,0.5) coordinate[dot, functorF2] (t) ++(0,0.375) coordinate[dot, functorF2] (t2) ++(0,0.875) coordinate (d);
    \path (d -| mbl) ++(-0.0,0) coordinate (tl) ++(0,-0.25) coordinate (tl2);
    \path (mbr) ++(0,-1.25) coordinate (br) ++(0.75,0) coordinate (brr) ++(0,1.375) coordinate (brr2);
    \path (mbl) ++(0,-1.25) coordinate (bl);
    \end{scope}
    \draw[functorM] (bl) -- (mbl) to[out=90, in=180] (mu.center) to[out=0, in=90] (mbr) -- (br)
                    (mu) -- (t)
                    (t2) -- (d);
    \draw[functorL] (tl) to[out=270, in=180] (t2)
                    (t) to[out=0, in=90] (brr2) -- (brr);
    \draw[functorF2] (t) -- (t2);
    \coordinate (cornerNW) at ($(tl) + (-0.5,0)$);
    \coordinate (cornerSE) at ($(brr) + (0.5,0)$);
    \draw (cornerNW) rectangle (cornerSE);
   \end{tikzpicture}
  \end{equation*}
  
  We will show this holds by post-composing with the invertible 2-morphism $\gamma'_{(\id,g),(f,\id)}$.
  \\ \vspace{-3pt plus 1pt}
  \begingroup
  \allowdisplaybreaks
  \begin{align*}
   \begin{tikzpicture}[scale=1.0,baseline={([yshift=-0.5ex]current bounding box.center)}]
    \begin{scope}[on layer=over]
    \path coordinate[dot, functorM] (mu)
    +(0,0.0) coordinate (d)
    +(-1,-1) coordinate[dot, functorF2] (mbl)
    +(1,-1) coordinate (mbr);
    \path (d) ++(-2.0,0) coordinate (tl);
    \path (mbr) ++(0,-0.75) coordinate[dot, functorF2] (r) ++(0,-0.375) coordinate[dot, functorF2] (r2) ++(0,-0.625) coordinate (br) ++(0.75,0) coordinate (brr) ++(0,0.25) coordinate (brr2);
    \path (mbl) ++(0,-0.375) coordinate[dot, functorF2] (l) ++(0,-1.375) coordinate (bl);
    \path (d) ++(-1.0,1.0) coordinate[dot, functorF2] (muU) ++(0,0.5) coordinate (dU);
    \end{scope}
    \draw[functorM] (bl) -- (l)
                    (mbl) to[out=90, in=180] (mu.center) to[out=0, in=90] (mbr) -- (r)
                    (r2) -- (br)
                    (mu) -- (d) to[out=90, in=0] (muU.center);
    \draw[functorL] (tl) to[out=-90, in=180] (mbl)
                    (l) to[out=0, in=180] (r)
                    (r2) to[out=0, in=90] (brr)
                    (tl) to[out=90, in=180] (muU.center);
    \draw[functorF2] (mbl) -- (l)
                     (r) -- (r2)
                     (muU) -- (dU);
    \coordinate (ttl) at (tl |- dU);
    \coordinate (cornerNW) at ($(ttl) + (-0.5,0)$);
    \coordinate (cornerSE) at ($(brr) + (0.5,0)$);
    \draw (cornerNW) rectangle (cornerSE);
   \end{tikzpicture}
   &\enspace=\enspace
   \begin{tikzpicture}[scale=1.0,baseline={([yshift=-0.5ex]current bounding box.center)}]
    \begin{scope}[on layer=over]
    \path coordinate (mu)
    +(0,0.0) coordinate (d)
    +(-1,-1) coordinate[dot, functorF2] (mbl)
    ++(1,-1) coordinate (mbr) ++(0,1.0) coordinate (mbrU);
    \path (mu) ++(-1.0,0) coordinate[dot, functorF2] (muNew);
    \path (mbr) ++(0,-0.75) coordinate[dot, functorF2] (r) ++(0,-0.375) coordinate[dot, functorF2] (r2) ++(0,-0.625) coordinate (br) ++(0.75,0) coordinate (brr) ++(0,0.25) coordinate (brr2);
    \path (mbl) ++(0,-0.375) coordinate[dot, functorF2] (l) ++(0,-1.375) coordinate (bl);
    \path (d) ++(0,1.0) coordinate[dot, functorF2] (muU) ++(0,0.5) coordinate (dU);
    \end{scope}
    \draw[functorM] (bl) -- (l)
                    (r2) -- (br)
                    (r) -- (mbr) -- (mbrU) to[out=90, in=0] (muU.center)
                    (muNew) to[bend left=50] (mbl);
    \draw[functorL] (l) to[out=0, in=180] (r)
                    (r2) to[out=0, in=90] (brr)
                    (muNew) to[bend right=50] (mbl);
    \draw[functorF2] (mbl) -- (l)
                     (r) -- (r2)
                     (muU) -- (dU)
                     (muU.center) to[out=180, in=90] (muNew);
    \coordinate (ttl) at (mbl |- dU);
    \coordinate (cornerNW) at ($(ttl) + (-0.75,0)$);
    \coordinate (cornerSE) at ($(brr) + (0.5,0)$);
    \draw (cornerNW) rectangle (cornerSE);
   \end{tikzpicture}
   \\[5pt] =\enspace 
   \begin{tikzpicture}[scale=1.0,baseline={([yshift=-0.5ex]current bounding box.center)}]
    \begin{scope}[on layer=over]
    \path coordinate (mu)
    +(0,0.0) coordinate (d)
    +(-1,-0.25) coordinate (mbl)
    +(1,0) coordinate (mbrU)
    +(1,-0.25) coordinate (mbr);
    \path (mu) ++(-1.0,0) coordinate (muNew);
    \path (mbr) ++(0,-0.75) coordinate[dot, functorF2] (r) ++(0,-0.375) coordinate[dot, functorF2] (r2) ++(0,-0.625) coordinate (br) ++(0.75,0) coordinate (brr) ++(0,0.25) coordinate (brr2);
    \path (mbl) ++(0,-0.375) coordinate[dot, functorF2] (l) ++(0,-1.375) coordinate (bl);
    \path (d) ++(0,1.0) coordinate[dot, functorF2] (muU) ++(0,0.5) coordinate (dU);
    \end{scope}
    \draw[functorM] (bl) -- (l)
                    (r2) -- (br)
                    (r) -- (mbr) -- (mbrU) to[out=90, in=0] (muU.center);
    \draw[functorL] (l) to[out=0, in=180] (r)
                    (r2) to[out=0, in=90] (brr);
    \draw[functorF2] (mbl) -- (l)
                     (r) -- (r2)
                     (muU) -- (dU)
                     (muU.center) to[out=180, in=90] (muNew) -- (mbl);
    \coordinate (ttl) at (mbl |- dU);
    \coordinate (cornerNW) at ($(ttl) + (-0.75,0)$);
    \coordinate (cornerSE) at ($(brr) + (0.5,0)$);
    \draw (cornerNW) rectangle (cornerSE);
   \end{tikzpicture}
   &\enspace=\enspace
   \begin{tikzpicture}[scale=1.0,baseline={([yshift=-0.5ex]current bounding box.center)}]
    \begin{scope}[on layer=over]
    \path coordinate (mu)
    +(0,0.0) coordinate (d)
    +(-1,-0.25) coordinate (mbl)
    +(1,-0.25) coordinate[dot, functorF2] (mbr);
    \path (mu) ++(-1.0,0) coordinate (muNew);
    \path (mbr) ++(0,-0.75) coordinate[dot, functorF2] (r) ++(0,-0.375) coordinate[dot, functorF2] (r2) ++(0,-0.625) coordinate (br) ++(0.75,0) coordinate (brr) ++(0,0.25) coordinate (brr2);
    \path (mbl) ++(0,-0.375) coordinate (l) ++(0,-1.375) coordinate (bl);
    \path (d) ++(0,1.0) coordinate[dot, functorF2] (muU) ++(0,0.5) coordinate (dU);
    \end{scope}
    \draw[functorM] (muU.center) to[out=180, in=90] (muNew) -- (l) -- (bl)
                    (r2) -- (br)
                    (r) to[bend right=50] (mbr.center);
    \draw[functorL] (mbr.center) to[bend right=50] (r)
                    (r2) to[out=0, in=90] (brr);
    \draw[functorF2] (r) -- (r2)
                     (muU) -- (dU)
                     (mbr.center) to[out=90, in=0] (muU.center);
    \coordinate (ttl) at (mbl |- dU);
    \coordinate (cornerNW) at ($(ttl) + (-0.75,0)$);
    \coordinate (cornerSE) at ($(brr) + (0.5,0)$);
    \draw (cornerNW) rectangle (cornerSE);
   \end{tikzpicture}
   \\[5pt] =\enspace
   \begin{tikzpicture}[scale=1.0,baseline={([yshift=-0.5ex]current bounding box.center)}]
    \begin{scope}[on layer=over]
    \path coordinate (mu)
    +(0,0.0) coordinate (d)
    +(-1,-0.25) coordinate (mbl)
    +(1,-0.25) coordinate (mbr);
    \path (mu) ++(-1.0,0) coordinate (muNew);
    \path (mbr) ++(0,-0.5) coordinate[dot, functorF2] (r2) ++(0,-0.625) coordinate (br) ++(0.75,0) coordinate (brr) ++(0,0.25) coordinate (brr2);
    \path (mbl) ++(0,-1.125) coordinate (bl);
    \path (d) ++(0,1.0) coordinate[dot, functorF2] (muU) ++(0,0.5) coordinate (dU);
    \end{scope}
    \draw[functorM] (muU.center) to[out=180, in=90] (muNew) -- (bl)
                    (r2) -- (br);
    \draw[functorL] (r2) to[out=0, in=90] (brr);
    \draw[functorF2] (r2) -- (mbr)
                     (muU) -- (dU)
                     (mbr.center) to[out=90, in=0] (muU.center);
    \coordinate (ttl) at (mbl |- dU);
    \coordinate (cornerNW) at ($(ttl) + (-0.75,0)$);
    \coordinate (cornerSE) at ($(brr) + (0.5,0)$);
    \draw (cornerNW) rectangle (cornerSE);
   \end{tikzpicture}
   &\enspace=\enspace
   \begin{tikzpicture}[scale=1.0,baseline={([yshift=-0.5ex]current bounding box.center)}]
    \begin{scope}[on layer=over]
    \path coordinate[dot, functorM] (mu)
    +(-1,-1) coordinate (mbl)
    +(1,-1) coordinate (mbr)
    ++(0,0.5) coordinate[dot, functorF2] (t) ++(0,0.5) coordinate (d);
    \path (d -| mbl) ++(-0.0,0) coordinate (tl);
    \path (mbr) ++(0,-0.875) coordinate (br) ++(0.75,0) coordinate (brr) ++(0,1.0) coordinate (brr2);
    \path (mbl) ++(0,-0.875) coordinate (bl);
    \end{scope}
    \draw[functorM] (bl) -- (mbl) to[out=90, in=180] (mu.center) to[out=0, in=90] (mbr) -- (br)
                    (mu) -- (t);
    \draw[functorL] (t) to[out=0, in=90] (brr2) -- (brr);
    \draw[functorF2] (t) -- (d);
    \coordinate (cornerNW) at ($(tl) + (-0.5,0)$);
    \coordinate (cornerSE) at ($(brr) + (0.5,0)$);
    \draw (cornerNW) rectangle (cornerSE);
   \end{tikzpicture}
  \end{align*}
  \endgroup
  Here we make repeated use of associativity of the compositors and the invertibility of $\gamma'_{(\id,g),(f,\id)}$. It is helpful to remember that the invertible compositors are those with a red wire as their left input and a blue wire as their right input.
  This last string diagram is then the right-hand diagram above composed with $\gamma'_{(\id,g),(f,\id)}$ (before cancelling with its inverse) as required.
  
  To prove condition \ref{eq:distunit1} we again post-compose with $\gamma'_{(\id,\id),(f,\id)}$. Then the left-hand side gives the following.
  \\ \vspace{-3pt plus 1pt}
  \begin{equation*}
   \begin{tikzpicture}[scale=1.0,baseline={([yshift=-0.5ex]current bounding box.center)}]
    \begin{scope}[on layer=over]
    \path coordinate[dot, functorF2] (mu) ++(0,0.5) coordinate (d);
    \path (mu) ++(0,-1.0) coordinate[dot, functorF2] (r) ++(0,-0.5) coordinate[dot, functorF2] (r2) ++(0,-0.75) coordinate[dot, functorM] (br) ++(0.75,-0.5) coordinate (brr);
    \path (mu) ++(-0.5,-0.5) coordinate (l);
    \end{scope}
    \draw[functorM] (br) -- (r2)
                    (r) -- (mu.center);
    \draw[functorL] (mu.center) to[out=180, in=90] (l) to[out=-90, in=180] (r)
                    (r2) to[out=0, in=90] (brr);
    \draw[functorF2] (r) -- (r2)
                     (mu) -- (d);
    \coordinate (ttl) at (l |- d);
    \coordinate (cornerNW) at ($(ttl) + (-0.5,0)$);
    \coordinate (cornerSE) at ($(brr) + (0.5,0)$);
    \draw (cornerNW) rectangle (cornerSE);
   \end{tikzpicture}
   \enspace=\enspace
   \begin{tikzpicture}[scale=1.0,baseline={([yshift=-0.5ex]current bounding box.center)}]
    \begin{scope}[on layer=over]
    \path coordinate (mu) ++(0,0.5) coordinate (d);
    \path (mu) ++(0,-0.5) coordinate (r) ++(0,-0.5) coordinate[dot, functorF2] (r2) ++(0,-1.0) coordinate[dot, functorM] (br) ++(0.75,-0.75) coordinate (brr);
    \end{scope}
    \draw[functorM] (br) -- (r2);
    \draw[functorL] (r2) to[out=0, in=90] (brr);
    \draw[functorF2] (r) -- (r2) -- (d);
    \coordinate (ttl) at (d);
    \coordinate (cornerNW) at ($(ttl) + (-0.625,0)$);
    \coordinate (cornerSE) at ($(brr) + (0.5,0)$);
    \draw (cornerNW) rectangle (cornerSE);
   \end{tikzpicture}
   \enspace=\enspace
   \begin{tikzpicture}[scale=1.0,baseline={([yshift=-0.5ex]current bounding box.center)}]
    \begin{scope}[on layer=over]
    \path coordinate (mu) ++(0,0.5) coordinate (d);
    \path (mu) ++(0,-0.5) coordinate (r) ++(0,-0.5) coordinate (r2) ++(0,-1.0) coordinate (br) ++(0,-0.75) coordinate (brr);
    \end{scope}
    \draw[functorL] (brr) -- (d);
    \coordinate (ttl) at (d);
    \coordinate (cornerNW) at ($(ttl) + (-0.5,0)$);
    \coordinate (cornerSE) at ($(brr) + (0.5,0)$);
    \draw (cornerNW) rectangle (cornerSE);
   \end{tikzpicture}
  \end{equation*}
  On the other hand, the right-hand side yields the same result.
  \\ \vspace{-3pt plus 1pt}
  \begin{equation*}
   \begin{tikzpicture}[scale=1.0,baseline={([yshift=-0.5ex]current bounding box.center)}]
    \begin{scope}[on layer=over]
    \path coordinate (mu) ++(0,0.5) coordinate (d);
    \path (mu) ++(0,-0.5) coordinate (r) ++(0,-0.5) coordinate[dot, functorF2] (r2) ++(0,-1.0) coordinate[dot, functorM] (br) ++(-0.75,-0.75) coordinate (brr);
    \end{scope}
    \draw[functorM] (br) -- (r2);
    \draw[functorL] (r2) to[out=180, in=90] (brr);
    \draw[functorF2] (r) -- (r2) -- (d);
    \coordinate (ttl) at (d);
    \coordinate (cornerNW) at ($(ttl) + (0.625,0)$);
    \coordinate (cornerSE) at ($(brr) + (-0.5,0)$);
    \draw (cornerNW) rectangle (cornerSE);
   \end{tikzpicture}
   \enspace=\enspace
   \begin{tikzpicture}[scale=1.0,baseline={([yshift=-0.5ex]current bounding box.center)}]
    \begin{scope}[on layer=over]
    \path coordinate (mu) ++(0,0.5) coordinate (d);
    \path (mu) ++(0,-0.5) coordinate (r) ++(0,-0.5) coordinate (r2) ++(0,-1.0) coordinate (br) ++(0,-0.75) coordinate (brr);;
    \end{scope}
    \draw[functorL] (brr) -- (d);
    \coordinate (ttl) at (d);
    \coordinate (cornerNW) at ($(ttl) + (-0.5,0)$);
    \coordinate (cornerSE) at ($(brr) + (0.5,0)$);
    \draw (cornerNW) rectangle (cornerSE);
   \end{tikzpicture}
  \end{equation*}
  
  The condition \ref{eq:distnat1} is shown in a similar way.
  After composing with $\gamma'_{(\id,g),(f,\id)}$ we obtain the following sequence of string diagrams.
  \begin{equation*}
   \begin{tikzpicture}[scale=1.0,baseline={([yshift=-0.5ex]current bounding box.center)}]
    \begin{scope}[on layer=over]
    \path coordinate (mu) ++(0,0.5) coordinate (d);
    \path (mu) ++(0,-0.5) coordinate (r) ++(0,-0.5) coordinate[dot, functorF2] (r2) ++(0,-0.75) coordinate[dot, functorM] (br) ++(0.75,-0.75) coordinate (brr);
    \end{scope}
    \draw[functorM] (br |- brr) -- (br) -- (r2);
    \draw[functorL] (r2) to[out=0, in=90] (brr);
    \draw[functorF2] (r2) -- (d);
    \coordinate (ttl) at (d);
    \coordinate (cornerNW) at ($(ttl) + (-0.625,0)$);
    \coordinate (cornerSE) at ($(brr) + (0.5,0)$);
    \draw (cornerNW) rectangle (cornerSE);
   \end{tikzpicture}
   \enspace=\enspace
   \begin{tikzpicture}[scale=1.0,baseline={([yshift=-0.5ex]current bounding box.center)}]
    \begin{scope}[on layer=over]
    \path coordinate[dot, functorF2] (mu) ++(0,0.75) coordinate (d);
    \path (mu) ++(0,-0.25) coordinate (r) ++(0,-0.5) coordinate[dot, functorF2] (r2) ++(0,-0.75) coordinate (br) ++(0.75,-0.75) coordinate (brr);
    \end{scope}
    \draw[functorM] (br |- brr) -- (br) -- (r2);
    \draw[functorL] (r2) to[out=0, in=90] (brr);
    \draw[functorF2] (r) -- (r2) -- (d);
    \coordinate (ttl) at (d);
    \coordinate (cornerNW) at ($(ttl) + (-0.625,0)$);
    \coordinate (cornerSE) at ($(brr) + (0.5,0)$);
    \draw (cornerNW) rectangle (cornerSE);
   \end{tikzpicture}
   \enspace=\enspace
   \begin{tikzpicture}[scale=1.0,baseline={([yshift=-0.5ex]current bounding box.center)}]
    \begin{scope}[on layer=over]
    \path coordinate[dot, functorF2] (mu) ++(0,0.5) coordinate[dot, functorF2] (d2) ++(0,0.5) coordinate (d);
    \path (mu) ++(0,-1.0) coordinate[dot, functorF2] (r) ++(0,-0.5) coordinate[dot, functorF2] (r2) ++(0,-0.0) coordinate (br) ++(0.5,-0.5) coordinate (brr);
    \path (mu) ++(-0.5,-0.5) coordinate (l);
    \end{scope}
    \draw[functorM] (br |- brr) -- (br) -- (r2)
                    (r) -- (mu.center);
    \draw[functorL] (mu.center) to[out=180, in=90] (l) to[out=-90, in=180] (r)
                    (r2) to[out=0, in=90] (brr);
    \draw[functorF2] (r) -- (r2)
                     (mu) -- (d);
    \coordinate (ttl) at (l |- d);
    \coordinate (cornerNW) at ($(ttl) + (-0.5,0)$);
    \coordinate (cornerSE) at ($(brr) + (0.5,0)$);
    \draw (cornerNW) rectangle (cornerSE);
   \end{tikzpicture}
   \enspace=\enspace
   \begin{tikzpicture}[scale=1.0,baseline={([yshift=-0.5ex]current bounding box.center)}]
    \begin{scope}[on layer=over]
    \path coordinate[dot, functorF2] (mu) ++(0,0.75) coordinate (d);
    \path (mu) ++(0,-0.5) coordinate[dot, functorM] (mid) ++(0,-0.5) coordinate[dot, functorF2] (r) ++(0,-0.5) coordinate[dot, functorF2] (r2) ++(0,-0.25) coordinate (br) ++(0.5,-0.5) coordinate (brr);
    \path (mu) ++(-0.5,-0.5) coordinate (l);
    \end{scope}
    \draw[functorM] (br |- brr) -- (br) -- (r2)
                    (r) -- (mu.center);
    \draw[functorL] (mu.center) to[out=180, in=90] (l) to[out=-90, in=180] (r)
                    (r2) to[out=0, in=90] (brr);
    \draw[functorF2] (r) -- (r2)
                     (mu) -- (d);
    \coordinate (ttl) at (l |- d);
    \coordinate (cornerNW) at ($(ttl) + (-0.5,0)$);
    \coordinate (cornerSE) at ($(brr) + (0.5,0)$);
    \draw (cornerNW) rectangle (cornerSE);
   \end{tikzpicture}
  \end{equation*}
  
  The other conditions are dual. 
  
  Next we show that $T(\theta')$ is a 1-morphism of distributive laws.
  Clearly, $\theta'^B_C = \theta'^C_B$. To see that they satisfy the Yang--Baxter equation, we post-compose the necessary diagrams with $\gamma'^{-1}_{(\id,g),(f,\id)} \theta'_{B,C}$ and observe the following equalities.
  \\ \vspace{-3pt plus 1pt}
  \begin{align*}
  \begin{tikzpicture}[scale=1.0,baseline={([yshift=-0.5ex]current bounding box.center)}]
    \begin{scope}[on layer=over]
    \path coordinate[dot, functorF2] (mu) ++(0,1.0) coordinate (d);
    \path (mu) ++(0,-0.5) coordinate (mid) ++(0,-0.5) coordinate[dot, functorF2] (m) ++(0,-1.0) coordinate[dot, functorF2] (m2) ++(0,-0.75) coordinate (b) ++(0.5,-0.5) coordinate (br) ++(0.5,0) coordinate (brr) ++(0,0.0) coordinate (brr2);
    \path (mu) ++(-0.5,-0.5) coordinate (l);
    \path (d) ++(-1.0,0) coordinate (tl) ++(0,-2.75) coordinate (tl2);
    \end{scope}
    \draw[functorM] (b |- br) -- (b) -- (m2)
                    (m) -- (mu.center);
    \draw[functorL] (mu.center) to[out=180, in=90] (l) to[out=-90, in=180] (m)
                    (m2) to[out=0, in=90] (br);
    \draw[functorF2] (m) -- (m2)
                     (mu) -- (d);
    \draw[cross line] (tl) -- (tl2) to[out=270, in=90] (brr2) -- (brr);
    \coordinate (cornerNW) at ($(tl) + (-0.5,0)$);
    \coordinate (cornerSE) at ($(brr) + (0.5,0)$);
    \draw (cornerNW) rectangle (cornerSE);
  \end{tikzpicture}
  &\enspace=\enspace
  \begin{tikzpicture}[scale=1.0,baseline={([yshift=-0.5ex]current bounding box.center)}]
    \begin{scope}[on layer=over]
    \path coordinate[dot, functorF2] (mu) ++(0,1.0) coordinate (d);
    \path (mu) ++(0,-0.5) coordinate (mid) ++(0,-0.5) coordinate[dot, functorF2] (m) ++(0,-1.0) coordinate[dot, functorF2] (m2) ++(0,-0.75) coordinate (b) ++(0.5,-0.5) coordinate (br) ++(0.5,0) coordinate (brr) ++(0,1.0) coordinate (brr2);
    \path (mu) ++(-0.5,-0.5) coordinate (l);
    \path (d) ++(-1.0,0) coordinate (tl) ++(0,-1.75) coordinate (tl2);
    \end{scope}
    \draw[functorM] (b |- br) -- (b) -- (m2)
                    (m) -- (mu.center);
    \draw[functorL] (mu.center) to[out=180, in=90] (l) to[out=-90, in=180] (m)
                    (m2) to[out=0, in=90] (br);
    \draw[functorF2] (m) -- (m2)
                     (mu) -- (d);
    \draw[cross line] (tl) -- (tl2) to[out=270, in=90] (brr2) -- (brr);
    \coordinate (cornerNW) at ($(tl) + (-0.5,0)$);
    \coordinate (cornerSE) at ($(brr) + (0.5,0)$);
    \draw (cornerNW) rectangle (cornerSE);
  \end{tikzpicture}
  \enspace=\enspace
  \begin{tikzpicture}[scale=1.0,baseline={([yshift=-0.5ex]current bounding box.center)}]
    \begin{scope}[on layer=over]
    \path coordinate (mu) ++(0,1.0) coordinate (d);
    \path (mu) ++(0,-0.5) coordinate (mid) ++(0,-0.5) coordinate (m) ++(0,-1.0) coordinate[dot, functorF2] (m2) ++(0,-0.75) coordinate (b) ++(0.5,-0.5) coordinate (br) ++(0.5,0) coordinate (brr) ++(0,1.75) coordinate (brr2);
    \path (mu) ++(-0.5,-0.5) coordinate (l);
    \path (d) ++(-1.0,0) coordinate (tl) ++(0,-1.0) coordinate (tl2);
    \end{scope}
    \draw[functorM] (b |- br) -- (b) -- (m2);
    \draw[functorL] (m2) to[out=0, in=90] (br);
    \draw[functorF2] (m2) -- (m) -- (mu) -- (d);
    \draw[cross line] (tl) -- (tl2) to[out=270, in=90] (brr2) -- (brr);
    \coordinate (cornerNW) at ($(tl) + (-0.5,0)$);
    \coordinate (cornerSE) at ($(brr) + (0.5,0)$);
    \draw (cornerNW) rectangle (cornerSE);
  \end{tikzpicture}
  \\[5pt]=\enspace
  \begin{tikzpicture}[scale=1.0,baseline={([yshift=-0.5ex]current bounding box.center)}]
    \begin{scope}[on layer=over]
    \path coordinate[dot, functorF2] (mu) ++(0,1.0) coordinate (d);
    \path (mu) ++(0,-0.5) coordinate (mid) ++(0,-0.5) coordinate[dot, functorF2] (m) ++(0,-1.0) coordinate[dot, functorF2] (m2) ++(0,-0.75) coordinate (b) ++(0.5,-0.5) coordinate (br) ++(0.5,0) coordinate (brr) ++(0,3.0) coordinate (brr2);
    \path (mu) ++(-0.5,-0.5) coordinate (l);
    \path (d) ++(-1.0,0) coordinate (tl) ++(0,-0.0) coordinate (tl2);
    \end{scope}
    \draw[functorM] (b |- br) -- (b) -- (m2)
                    (m) -- (mu.center);
    \draw[functorL] (mu.center) to[out=180, in=90] (l) to[out=-90, in=180] (m)
                    (m2) to[out=0, in=90] (br);
    \draw[functorF2] (m) -- (m2)
                     (mu) -- (d);
    \draw[cross line] (tl) -- (tl2) to[out=270, in=90] (brr2) -- (brr);
    \coordinate (cornerNW) at ($(tl) + (-0.5,0)$);
    \coordinate (cornerSE) at ($(brr) + (0.5,0)$);
    \draw (cornerNW) rectangle (cornerSE);
  \end{tikzpicture}
  &\enspace=\enspace
  \begin{tikzpicture}[scale=1.0,baseline={([yshift=-0.5ex]current bounding box.center)}]
    \begin{scope}[on layer=over]
    \path coordinate[dot, functorF2] (mu) ++(0,1.0) coordinate (d);
    \path (mu) ++(0,-0.5) coordinate (mid) ++(0,-0.5) coordinate[dot, functorF2] (m) ++(0,-1.0) coordinate[dot, functorF2] (m2) ++(0,-0.75) coordinate (b) ++(0.5,-0.5) coordinate (br) ++(0.5,0) coordinate (brr) ++(0,1.875) coordinate (brr2);
    \path (mu) ++(-0.5,-0.5) coordinate (l);
    \path (d) ++(-1.0,0) coordinate (tl) ++(0,-0.875) coordinate (tl2);
    \end{scope}
    \draw[functorM] (b |- br) -- (b) -- (m2)
                    (m) -- (mu.center);
    \draw[functorL] (mu.center) to[out=180, in=90] (l) to[out=-90, in=180] (m)
                    (m2) to[out=0, in=90] (br);
    \draw[functorF2] (m) -- (m2)
                     (mu) -- (d);
    \draw[cross line] (tl) -- (tl2) to[out=270, in=90] (brr2) -- (brr);
    \coordinate (cornerNW) at ($(tl) + (-0.5,0)$);
    \coordinate (cornerSE) at ($(brr) + (0.5,0)$);
    \draw (cornerNW) rectangle (cornerSE);
  \end{tikzpicture}
  \end{align*}
  This yields the Yang--Baxter equation since $\gamma\inv_{(\id,g),(f,\id)}$ is invertible.
  
  It is immediate that $T(\beth')$ defines families of modifications that agree in the sense necessary to give a 2-morphism of distributive laws.
  Moreover, it is clear that $T$ is 2-functorial since composition in $\Dist(\B,\C,\D)$ is computed componentwise.
  
  Now all that remains is to exhibit 2-natural isomorphisms $\lambda\colon \overline{K}T \cong \Id$ and $\kappa\colon \Id \cong T\overline{K}$.
  Let us start by defining the components of $\lambda$.
  For a unitary decomposable lax bifunctor $P'\colon \B \times \C \to \D$ we require an oplax transformation $\lambda_{P'}\colon \overline{K}T(P') \to P'$.
  Explicitly, we have $\overline{K}T(P')(B,C) = P'(B,C)$ and $\overline{K}T(P')(f,g) = P'(\id,g) P'(f,\id)$. Define $\lambda_{P'}$ to have identities as its 1-morphisms and $\gamma'_{(\id,g),(f,\id)}$ as its 2-morphisms.
  
  We prove $\lambda_{P'}$ is indeed an oplax transformation.
  The unit condition for an oplax transformation holds by the unit law for the lax functor $P'$ as in the diagram below. (Here the invisible identity wire intersects the left-hand diagram at $\gamma'$.)
  \\ \vspace{-3pt plus 1pt}
  \begin{equation*} 
   \begin{tikzpicture}[scale=1.0,baseline={([yshift=-0.5ex]current bounding box.center)}]
    \begin{scope}[on layer=over]
    \path coordinate[dot, functorF2] (muU)
    +(0,0.75) coordinate (dU)
    +(-1,-1) coordinate (mblU)
    +(1,-1) coordinate (mbrU);
    \path (mblU) ++(0,-0.5) coordinate[dot, functorL] (mu);
    \path (mbrU) ++(0,-0.5) coordinate[dot, functorM] (mur) ++ (0,-0.75) coordinate (brr);
    \end{scope}
    \draw[functorL] (mu) -- (mblU) to[out=90, in=180] (muU.center);
    \draw[functorM] (muU.center) to[out=0, in=90] (mbrU) -- (mur);
    \draw[functorF2] (muU) -- (dU);
    \coordinate (tl) at (mblU |- dU);
    \coordinate (cornerNW) at ($(tl) + (-0.5,0)$);
    \coordinate (cornerSE) at ($(brr) + (0.5,0)$);
    \draw (cornerNW) rectangle (cornerSE);
   \end{tikzpicture}
   \enspace=\enspace
   \begin{tikzpicture}[scale=1.0,baseline={([yshift=-0.5ex]current bounding box.center)}]
    \begin{scope}[on layer=over]
    \path coordinate (muU)
    +(0,1.0) coordinate (dU)
    ++(0,-1.25) coordinate[dot, functorF2] (b) ++(0,-0.75) coordinate (bb);
    \end{scope}
    \draw[functorF2] (dU) -- (b);
    \coordinate (cornerNW) at ($(dU) + (-0.5,0)$);
    \coordinate (cornerSE) at ($(bb) + (0.5,0)$);
    \draw (cornerNW) rectangle (cornerSE);
   \end{tikzpicture}
  \end{equation*}
  
  The compositor condition is given by the following sequence of string diagrams. (Again the topmost purple dot in the first diagram is the `intersection with the identity wire', while the rest of the diagram is the compositor for $\overline{K}T(P')$.)
  \\ \vspace{-3pt plus 1pt}
  \begingroup
  \allowdisplaybreaks
  \begin{align*}
   \begin{tikzpicture}[scale=1.0,baseline={([yshift=-0.5ex]current bounding box.center)}]
    \begin{scope}[on layer=over]
    \path coordinate[dot, functorM] (mu)
    +(-0.75,-0.75) coordinate (mbl)
    +(0.75,-0.75) coordinate (mbr);
    \path (mbr) ++(0,-2.0) coordinate (br);
    \path (mbl) ++(-0.75,-1.5) coordinate (bl2) ++(0,-0.5) coordinate (bl);
    \path (mu) ++(-2.25,0) coordinate[dot, functorL] (muL)
    +(-0.75,-0.75) coordinate (mblL)
    +(0.75,-0.75) coordinate (mbrL);
    \path (mbrL) ++(0.75,-1.5) coordinate (brL2) ++(0,-0.5) coordinate (brL);
    \path (mblL) ++(0,-2.0) coordinate (blL);
    \path ($(mu)!0.5!(muL)$) ++(0,1.25) coordinate[dot, functorF2] (muUL)
    +(0,0.5) coordinate (dUL)
    +(-1.5,-1) coordinate (mblUL)
    +(1.5,-1) coordinate (mbrUL);
    \path ($(mbr)!0.5!(mblL)$) ++(0,-0.5) coordinate[dot, functorF2] (mbMid) ++ (0,-0.5) coordinate[dot, functorF2] (bMid);
    \end{scope}
    \draw[functorM] (bl) -- (bl2) to[out=90, in=180] (bMid)
                    (mbMid) to[out=0, in=-90] (mbl) to[out=90, in=180] (mu.center) to[out=0, in=90] (mbr) -- (br)
                    (muUL.center) to[out=0, in=90] (mu);
    \draw[functorL] (blL) -- (mblL) to[out=90, in=180] (muL.center) to[out=0, in=90] (mbrL) to[out=-90, in=180] (mbMid)
                    (bMid) to[out=0, in=90] (brL2) -- (brL)
                    (muL) to[out=90, in=180] (muUL.center);
    \draw[functorF2] (muUL) -- (dUL)
                     (mbMid) -- (bMid);
    \coordinate (tl) at (mblL |- dUL);
    \coordinate (cornerNW) at ($(tl) + (-0.5,0)$);
    \coordinate (cornerSE) at ($(br) + (0.5,0)$);
    \draw (cornerNW) rectangle (cornerSE);
   \end{tikzpicture}
   &\enspace=\enspace
   \begin{tikzpicture}[scale=1.0,baseline={([yshift=-0.5ex]current bounding box.center)}]
    \begin{scope}[on layer=over] 
    \path coordinate[dot, functorM] (mu)
    +(-0.5,-0.5) coordinate (mbl)
    +(0.5,-0.5) coordinate (mbr);
    \path (mbr) ++(0,-1.75) coordinate (br);
    \path (mbl) ++(-0.75,-1.5) coordinate (bl2) ++(0,-0.25) coordinate (bl);
    \path (bl2) ++(0.75,0) coordinate (brl2) ++(0,-0.25) coordinate (brl);
    \path (mu) ++(-0.875,0.875) coordinate[dot, functorF2] (muNew);
    \path (muNew) ++(-0.875,0.875) coordinate[dot, functorF2] (muUL)
    +(0,0.5) coordinate (dUL)
    +(-1.5,-1) coordinate (mblUL)
    +(1.5,-1) coordinate (mbrUL);
    \path (mbl) ++(-0.375,-0.5) coordinate[dot, functorF2] (mbMid) ++ (0,-0.5) coordinate[dot, functorF2] (bMid);
    \path (muNew) ++(-1.75,0) coordinate (bll2) ++(0,-3.125) coordinate (bll);
    \end{scope}
    \draw[functorM] (bl) -- (bl2) to[out=90, in=180] (bMid)
                    (mbMid) to[out=0, in=-90] (mbl) to[out=90, in=180] (mu.center) to[out=0, in=90] (mbr) -- (br)
                    (muNew) to[out=0, in=90] (mu);
    \draw[functorL] (bll) -- (bll2) to[out=90, in=180] (muUL)
                    (bMid) to[out=0, in=90] (brl2) -- (brl)
                    (muNew) to[out=180, in=180] (mbMid);
    \draw[functorF2] (muUL.center) to[out=0, in=90] (muNew)
                     (muUL) -- (dUL)
                     (mbMid) -- (bMid);
    \coordinate (tl) at (bll |- dUL);
    \coordinate (cornerNW) at ($(tl) + (-0.5,0)$);
    \coordinate (cornerSE) at ($(br) + (0.5,0)$);
    \draw (cornerNW) rectangle (cornerSE);
   \end{tikzpicture}
   \enspace=\enspace
   \begin{tikzpicture}[scale=1.0,baseline={([yshift=-0.5ex]current bounding box.center)}]
    \begin{scope}[on layer=over]
    \path coordinate (mu)
    +(-0.5,-0.5) coordinate (mbl)
    +(0,-0.5) coordinate (mbr);
    \path (mbl) ++(-1.5,-1.5) coordinate (bl2) ++(0,-0.25) coordinate (bl);
    \path (bl2) ++(0.75,0) coordinate (brl2) ++(0,-0.25) coordinate (brl);
    \path (mu) ++(-0.875,0.875) coordinate[dot, functorF2] (muNew);
    \path (muNew) ++(-0.875,0.875) coordinate[dot, functorF2] (muUL)
    +(0,0.5) coordinate (dUL)
    +(-1.5,-1) coordinate (mblUL)
    +(1.5,-1) coordinate (mbrUL);
    \path (muNew)
    +(-0.75,-0.75) coordinate[dot, functorF2] (mblNew)
    +(0.75,-0.75) coordinate (mbrNew);
    \path (mblNew) ++(0,-1.125) coordinate[dot, functorF2] (mbMid) ++ (0,-0.5) coordinate[dot, functorF2] (bMid);
    \path (mbrNew) ++(0,-2.375) coordinate (br);
    \path (muNew) ++(-1.75,0) coordinate (bll2) ++(0,-3.125) coordinate (bll);
    \end{scope}
    \draw[functorM] (bl) -- (bl2) to[out=90, in=180] (bMid)
                    (muNew) to[out=0, in=90] (mbrNew) -- (br)
                    (mbMid.center) to[bend right=50] (mblNew.center);
    \draw[functorL] (bll) -- (bll2) to[out=90, in=180] (muUL)
                    (bMid) to[out=0, in=90] (brl2) -- (brl)
                    (mbMid.center) to[bend left=50] (mblNew.center);
    \draw[functorF2] (muUL.center) to[out=0, in=90] (muNew)
                     (muUL) -- (dUL)
                     (mbMid) -- (bMid)
                     (muNew) to[out=180, in=90] (mblNew);
    \coordinate (tl) at (bll |- dUL);
    \coordinate (cornerNW) at ($(tl) + (-0.5,0)$);
    \coordinate (cornerSE) at ($(br) + (0.5,0)$);
    \draw (cornerNW) rectangle (cornerSE);
   \end{tikzpicture}
   \\[5pt] =\enspace
   \begin{tikzpicture}[scale=1.0,baseline={([yshift=-0.5ex]current bounding box.center)}]
    \begin{scope}[on layer=over]
    \path coordinate (mu)
    +(-0.5,-0.5) coordinate (mbl)
    +(0,-0.5) coordinate (mbr);
    \path (mbl) ++(-1.5,-0.375) coordinate (bl2) ++(0,-0.5) coordinate (bl);
    \path (bl2) ++(0.75,0) coordinate (brl2) ++(0,-0.5) coordinate (brl);
    \path (mu) ++(-0.875,0.875) coordinate[dot, functorF2] (muNew);
    \path (muNew) ++(-0.875,0.875) coordinate[dot, functorF2] (muUL)
    +(0,0.5) coordinate (dUL)
    +(-1.5,-1) coordinate (mblUL)
    +(1.5,-1) coordinate (mbrUL);
    \path (muNew)
    +(-0.75,-0.75) coordinate (mblNew)
    +(0.75,-0.75) coordinate (mbrNew);
    \path (mblNew) ++(0,-0.0) coordinate (mbMid) ++ (0,-0.5) coordinate[dot, functorF2] (bMid);
    \path (mbrNew) ++(0,-1.5) coordinate (br);
    \path (muNew) ++(-1.75,0) coordinate (bll2) ++(0,-2.25) coordinate (bll);
    \end{scope}
    \draw[functorM] (bl) -- (bl2) to[out=90, in=180] (bMid)
                    (muNew) to[out=0, in=90] (mbrNew) -- (br);
    \draw[functorL] (bll) -- (bll2) to[out=90, in=180] (muUL)
                    (bMid) to[out=0, in=90] (brl2) -- (brl);
    \draw[functorF2] (muUL.center) to[out=0, in=90] (muNew)
                     (muUL) -- (dUL)
                     (mbMid) -- (bMid)
                     (muNew) to[out=180, in=90] (mblNew)
                     (mbMid) -- (mblNew);
    \coordinate (tl) at (bll |- dUL);
    \coordinate (cornerNW) at ($(tl) + (-0.5,0)$);
    \coordinate (cornerSE) at ($(br) + (0.5,0)$);
    \draw (cornerNW) rectangle (cornerSE);
   \end{tikzpicture}
   &\enspace=\enspace 
   \begin{tikzpicture}[scale=1.0,baseline={([yshift=-0.5ex]current bounding box.center)}]
    \begin{scope}[on layer=over]
    \path coordinate[dot, functorF2] (mu)
    +(-0.75,-0.75) coordinate (mbl)
    +(0.75,-0.75) coordinate (mbr);
    \path (mbr) ++(0,-1.125) coordinate (br);
    \path (mbl) ++(-0.75,-1.125) coordinate (bl);
    \path (mu) ++(-2.25,0) coordinate[dot, functorF2] (muL)
    +(-0.75,-0.75) coordinate (mblL)
    +(0.75,-0.75) coordinate (mbrL);
    \path (mbrL) ++(0.75,-1.125) coordinate (brL);
    \path (mblL) ++(0,-1.125) coordinate (blL);
    \path ($(mu)!0.5!(muL)$) ++(0,1.25) coordinate[dot, functorF2] (muUL)
    +(0,0.5) coordinate (dUL)
    +(-1.5,-1) coordinate (mblUL)
    +(1.5,-1) coordinate (mbrUL);
    \end{scope}
    \draw[functorM] (mu.center) to[out=0, in=90] (mbr) -- (br)
                    (muL.center) to[out=0, in=90] (mbrL) -- (bl);
    \draw[functorL] (blL) -- (mblL) to[out=90, in=180] (muL.center)
                    (brL) -- (mbl) to[out=90, in=180] (mu.center);
    \draw[functorF2] (muL) to[out=90, in=180] (muUL.center) to[out=0, in=90] (mu)
                     (muUL) -- (dUL);
    \coordinate (tl) at (mblL |- dUL);
    \coordinate (cornerNW) at ($(tl) + (-0.5,0)$);
    \coordinate (cornerSE) at ($(br) + (0.5,0)$);
    \draw (cornerNW) rectangle (cornerSE);
   \end{tikzpicture}
  \end{align*}
  \endgroup
  
  Finally, the 2-dimensional naturality condition follows immediately from the naturality condition for the lax functor $P'$.
  
  Thus we have defined the components of $\lambda$. We claim that this is indeed a strict 2-natural transformation (i.e.\ it is a pseudonatural transformation where the 2-morphism components are identities).
  
  For each oplax transformation $\theta'\colon P' \to Q'$ between decomposable unitary lax bifunctors we require that $\lambda_{Q'} \circ \overline{K}T(\theta') = \theta' \circ \lambda_{P'}$. This is clear on 1-morphism components, since the 1-morphisms components of $\lambda_{P'},\lambda_{Q'}$ are identities.
  For the 2-morphism components, the desired equality follows immediately from the coherence of the oplax transformation $\theta'$ with respect to compositors.
  
  The 2-dimensional naturality condition for $\lambda$ becomes that for each $\theta', \zeta'\colon P' \to Q'$ and modification $\beth'\colon \theta' \to \zeta'$ we have $\lambda_{Q'} \overline{K}T(\beth') = \beth' \lambda_{P'}$. But this is clear since the 1-morphism components of $\lambda_{P'}$ and $\lambda_{Q'}$ are identities.
  Moreover, $\lambda$ automatically satisfies the unitor and compositor conditions, since it is a strict 2-natural transformation between strict 2-functors.
  Finally, the decomposability assumption gives that the (1-morphism) components of $\lambda$ are pseudonatural transformations and consequently isomorphisms.
  Hence, $\lambda$ itself is a 2-natural isomorphism, as required.
  
  Now we define the 2-natural isomorphism $\kappa\colon \Id \cong T\overline{K}$. For each distributive law $\sigma$ between unitary lax functors we must define a morphism of distributive laws $\kappa_\sigma\colon \sigma \to T\overline{K}(\sigma)$.
  Let $(P, \gamma, \iota)$ be the collated functor $\overline{K}(\sigma)$.
  Then $T\overline{K}(\sigma)$ is the distributive law between the families $(P(-,C))_C$ and $(P(B,-))_B$ given by $T\overline{K}(\sigma)_{f,g} = \gamma\inv_{(\id,g),(f,\id)} \gamma_{(f,\id),(\id,g)}$.
  We claim the component families of $\kappa_\sigma$ can be given by the canonical families $(\kappa^C)_C$ and $(\kappa^B)_B$ of oplax transformations defined in \cref{thm:lax_bifunctor}.
  
  To show this defines a valid morphism of distributive laws we first note that $\kappa^C_B = \kappa^B_C$ since both are the identity morphism on $P(B,C)$.
  We must now check that these families satisfy the Yang--Baxter equation from \cref{def:morphismofdist}.
  In our situation the desired equality becomes that depicted by the following string diagrams. Here we have used \cref{prp:unitary_collation_decomposable} to express the inverse of $\gamma'_{(\id,g),(f,\id)}$ in terms of the unitors of the original lax functors.
  \\ \vspace{-3pt plus 1pt}
  \begin{equation*}
   \begin{tikzpicture}[scale=1.0,baseline={([yshift=-0.5ex]current bounding box.center)}]
    \begin{scope}[on layer=over]
    \path coordinate[dot, functorM] (mu)
    +(-0.75,-0.75) coordinate (mbl)
    +(0.75,-0.75) coordinate (mbr);
    \path (mbl) ++(-0.75,-1.0) coordinate (bl2) ++(0,-1.0) coordinate (bl);
    \coordinate (br) at (mbr |- bl);
    \path (br) ++(0,0.5) coordinate[dot, functorM] (br1);
    \path (mu) ++(-2.25,0) coordinate[dot, functorL] (muL)
    +(-0.75,-0.75) coordinate (mblL)
    +(0.75,-0.75) coordinate (mbrL);
    \path (mbrL) ++(0.75,-1.0) coordinate (brL2) ++(0,-1.0) coordinate (brL);
    \path (mblL) ++(0,-1.5) coordinate[dot, functorL] (blL1) ++(0,-0.5) coordinate (blL);
    \path (mu) ++(0,0.5) coordinate (d)
    +(0,0.75) coordinate (d2)
    +(-0.75,0) coordinate[dot, functorM] (mtl);
    \path (muL) ++(0,0.5) coordinate (dL)
    +(0,0.75) coordinate (dL2)
    +(0.75,0) coordinate[dot, functorL] (mtlL);
    \end{scope}
    \draw[functorM] (bl) -- (bl2) to[out=90, in=-90] (mbl) to[out=90, in=180] (mu.center) to[out=0, in=90] (mbr) -- (br1)
                    (mu) -- (d2);
    \draw[functorL, cross line] (blL1) -- (mblL) to[out=90, in=180] (muL.center) to[out=0, in=90] (mbrL) to[out=-90, in=90] (brL2) -- (brL)
                                (muL) -- (dL2);
    \draw[functorM] (mtl) -- (d2 -| mtl);
    \draw[functorL] (mtlL) -- (d2 -| mtlL);
    \coordinate (tl) at (d2 -| blL);
    \coordinate (cornerNW) at ($(tl) + (-0.5,0)$);
    \coordinate (cornerSE) at ($(br) + (0.5,0)$);
    \draw (cornerNW) rectangle (cornerSE);
   \end{tikzpicture}
   \enspace=\enspace
   \begin{tikzpicture}[scale=1.0,baseline={([yshift=-0.5ex]current bounding box.center)}]
    \begin{scope}[on layer=over]
    \path coordinate (mu);
    \path (mu) ++(-2.25,-2.5) coordinate (bl2) ++(0,-0.5) coordinate (bl);
    \path (mu) ++(-2.25,0) coordinate (muL);
    \path (muL) ++(2.25,-2.5) coordinate (brL2) ++(0,-0.5) coordinate (brL);
    \path (mu) ++(0,1.0) coordinate (d2);
    \path (mu) ++(-0.75,0.0) coordinate[dot, functorM] (mtl);
    \path (muL) ++(0,1.0) coordinate (dL2);
    \path (muL) ++(0.75,0.0) coordinate[dot, functorL] (mtlL);
    \end{scope}
    \draw[functorM] (bl) -- (bl2) to[out=90, in=-90] (mu.center)
                    (mu) -- (d2);
    \draw[functorL, cross line] (muL.center) to[out=-90, in=90] (brL2) -- (brL)
                                (muL) -- (dL2);
    \draw[functorM] (mtl) -- (d2 -| mtl);
    \draw[functorL] (mtlL) -- (d2 -| mtlL);
    \coordinate (cornerNW) at ($(dL2) + (-0.5,0)$);
    \coordinate (cornerSE) at ($(brL) + (0.5,0)$);
    \draw (cornerNW) rectangle (cornerSE);
   \end{tikzpicture}
  \end{equation*}
  This equality is immediately seen to hold by the unit laws.
  
  We now check that $\kappa$ is 2-natural transformation. The 1-dimensional naturality condition for $\kappa$ requires that for each 1-morphism of distributive laws $\overline{\theta}\colon \sigma^1 \to \sigma^2$ we have $\kappa_{\sigma^2} \circ \overline{\theta} = T\overline{K}(\overline{\theta}) \circ \kappa_{\sigma^1}$, which is an immediate consequence of \cref{prp:collate_oplax_transformations}.
  Similarly, the 2-dimensional naturality condition asks that $\kappa_{\sigma^2}\,\overline{\beth} = T\overline{K}(\overline{\beth})\,\kappa_{\sigma^1}$ for each 2-morphism of distributive laws $\overline{\beth}\colon \overline{\theta} \to \overline{\zeta}$, but this is immediate by inspection since the 1-morphism components of $\kappa_{\sigma^1}$ and $\kappa_{\sigma^2}$ are identity morphisms.
  Moreover, the compositor and unitor conditions again hold automatically as above.
  
  Finally, note that each component of $\kappa$ is an isomorphism since the unitors are invertible by assumption. Hence we have proved the result.
\end{proof}

\begin{remark}
At the end of \cref{sec:uncurrying} we discussed how collation corresponds to the uncurrying 2-functor $J$ composed with the equivalence between $\Dist(\B,\C\,\D)$ and $\Laxop(\B,\Laxop(\C,\D))$ defined in \cref{thm:equivalence_between_dist_and_iterated_functor_cat}.
If we denote the category of unitary lax bifunctors from $\C$ to $\D$ by $\mathrm{ULax}_\mathrm{op}(\C, \D)$, then as a consequence of the above results we find that $J$ restricts to an equivalence $\overline{J}$ between $\mathrm{ULax}_\mathrm{op}(\B,\mathrm{ULax}_\mathrm{op}(\C,\D))$ and the 2-category of decomposable unitary lax bifunctors. The `inverse' functor then allows us to \emph{curry} this class of lax bifunctors.
\end{remark}

\section{Special cases}

The collation of lax functors restricts to give a number of important constructions in category theory.
We have discussed the example of distributive laws of monads throughout this paper.

Also notice that because pseudofunctors are unitary lax functors we have the pseudo-bifunctor theorem mentioned in the introduction as a corollary of \cref{thm:normal_lax_bifunctor}.

\begin{theorem}\label{thm:pseudo_bifunctor}
  Let $\sigma \in \Dist(\B,\C,\D)$ be an \emph{invertible} distributive law between families of pseudofunctors $L$ and $M$.
  The collation of these yields a pseudo-bifunctor $(P,\gamma,\iota)\colon \B \times \C \to \D$ such that $P(B,-)$ is canonically isomorphic to $M_B$ for each $B \in \B$ and $P(-,C)$ is canonically isomorphic to $L_C$ for each $C \in \C$.
  
  Conversely, let $P' = (P', \gamma', \iota') \colon \B \times \C \to \D$ be a pseudo-bifunctor. Then setting $L_C = P'(-,C)$ and $M_B = P'(B,-)$, we have a distributive law $\sigma_{f,g} = \gamma^{\prime\, -1}_{(\id,g),(f,\id)} \gamma'_{(f,\id),(\id,g)}$ and the resulting collated pseudo-bifunctor $P$ is pseudonaturally isomorphic to $P'$.
  
  Moreover, the equivalence of \cref{thm:normal_lax_bifunctor} restricts to an equivalence between the sub-2-category of invertible distributive laws of pseudofunctors and that of pseudo-bifunctors.
\end{theorem}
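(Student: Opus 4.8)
The plan is to deduce this entirely from \cref{thm:normal_lax_bifunctor} by restricting the equivalence there to suitable full sub-2-categories. The guiding observation is that pseudofunctors are exactly the unitary lax functors whose compositors are all invertible, and that a pseudo-bifunctor is in particular a decomposable unitary lax bifunctor; so the substantive things to check are (i) that the collation 2-functor $\overline{K}$ of \cref{thm:normal_lax_bifunctor} sends an invertible distributive law between pseudofunctors to a pseudo-bifunctor, and (ii) that the pseudo-inverse $T$ sends a pseudo-bifunctor to an invertible distributive law between pseudofunctors. Once (i) and (ii) hold, the 2-natural isomorphisms $\lambda\colon \overline{K}T\cong\Id$ and $\kappa\colon\Id\cong T\overline{K}$ from the proof of \cref{thm:normal_lax_bifunctor} restrict to these full sub-2-categories with no further argument, giving the asserted equivalence; and the canonical isomorphisms and the pseudonatural isomorphism $P\cong P'$ requested in the first two statements are precisely the icons $\kappa^B,\kappa^C$ of \cref{thm:lax_bifunctor} and the component $\lambda_{P'}$ of $\lambda$.

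For (i), let $\sigma$ be an invertible distributive law between pseudofunctors $L$ and $M$. \Cref{prp:unitary_collation_decomposable} already shows that the collated $P$ is a decomposable unitary lax bifunctor, and the icons $\kappa^B\colon M_B\to P(B,-)$ and $\kappa^C\colon L_C\to P(-,C)$ of \cref{thm:lax_bifunctor} are invertible here because their 2-morphism components $\kappa^B_g=M_B(g)\iota^{C_1}_B$ and $\kappa^C_f=\iota^{B_2}_C L_C(f)$ are whiskerings of the invertible unitors of the pseudofunctors; this furnishes the canonical isomorphisms $P(B,-)\cong M_B$ and $P(-,C)\cong L_C$. What remains is to upgrade $P$ from decomposable unitary to fully pseudo, i.e.\ to show \emph{every} compositor is invertible. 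The unitor $\iota_{B,C}=\iota^B_C\ast\iota^C_B$ is a horizontal composite of isomorphisms; and the general compositor $\gamma_{(f',g'),(f,g)}$, read off the string diagram in \cref{thm:lax_bifunctor}, is the vertical composite of the whiskered 2-morphism $M_{B_3}(g')\ast\sigma_{f',g}\ast L_{C_1}(f)$ with the whiskered compositors $\gamma^{B_3}_{g',g}$ and $\gamma^{C_1}_{f',f}$ of the $M$ and $L$ families. Each factor is an isomorphism because $\sigma$ is invertible by hypothesis and $L,M$ are pseudofunctors, so $\gamma_{(f',g'),(f,g)}$ is invertible and $P$ is a pseudo-bifunctor.

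For (ii), given a pseudo-bifunctor $P'$, I would first note that its one-variable restrictions $L_C=P'(-,C)$ and $M_B=P'(B,-)$ are pseudofunctors, since their compositors are compositors of $P'$ of the shape $\gamma'_{(f_2,\id),(f_1,\id)}$ and $\gamma'_{(\id,g_2),(\id,g_1)}$. Hence $P'$ is a decomposable unitary lax bifunctor and \cref{thm:normal_lax_bifunctor} applies: $T(P')$ is the distributive law with $\sigma_{f,g}=\gamma^{\prime\,-1}_{(\id,g),(f,\id)}\gamma'_{(f,\id),(\id,g)}$. Because $P'$ is pseudo, both compositors appearing here are isomorphisms, so each $\sigma_{f,g}$ is invertible and $T(P')$ is an invertible distributive law between pseudofunctors. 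The recollated bifunctor $P=\overline{K}T(P')$ is pseudonaturally isomorphic to $P'$ via the component $\lambda_{P'}\colon\overline{K}T(P')\to P'$ of the 2-natural isomorphism $\lambda$: it has identities for 1-morphism components and the compositors $\gamma'_{(\id,g),(f,\id)}$ for 2-morphism components, which are invertible since $P'$ is pseudo, so $\lambda_{P'}$ is a pseudonatural isomorphism.

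I expect the one genuinely new calculation to be the invertibility of the general compositor $\gamma_{(f',g'),(f,g)}$ in step (i): \cref{thm:normal_lax_bifunctor} only delivers decomposability --- invertibility of the compositors $\gamma_{(\id,g),(f,\id)}$ --- so one must unwind the string-diagram formula for the general compositor and recognise it as a vertical composite of whiskerings of invertible 2-morphisms. Everything else is bookkeeping about which pieces of data become invertible under the pseudofunctor hypotheses, followed by a direct appeal to \cref{thm:normal_lax_bifunctor}.
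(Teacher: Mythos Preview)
Your proposal is correct and follows exactly the approach the paper intends: the paper states this theorem without proof, presenting it explicitly as a corollary of \cref{thm:normal_lax_bifunctor} via the sentence ``because pseudofunctors are unitary lax functors we have the pseudo-bifunctor theorem \ldots\ as a corollary''. Your write-up supplies the details the paper omits --- in particular the check that the general compositor $\gamma_{(f',g'),(f,g)}$ is invertible because it is a composite of whiskerings of $\sigma_{f',g}$, $\gamma^{B_3}_{g',g}$, and $\gamma^{C_1}_{f',f}$ --- which is precisely the one point beyond decomposability that needs verification.
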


In fact, this pseudofunctor result can be slightly improved, since only axioms \ref{eq:distcomp1}, \ref{eq:distcomp2}, \ref{eq:distnat1} and \ref{eq:distnat2} need to be checked.
\begin{proposition}\label{prop:sigma_invertible_implies_unit}
  If the morphism $\sigma$ in a distributive law is invertible, then the unit axioms \ref{eq:distunit1} and \ref{eq:distunit2} follow from the other axioms.
\end{proposition}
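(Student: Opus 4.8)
It is enough to derive \ref{eq:distunit1}, since \ref{eq:distunit2} follows by the symmetry that interchanges the roles of $\B$ and $\C$ (hence of the $L$- and $M$-families) and replaces \ref{eq:distcomp1} by \ref{eq:distcomp2}. So fix $f\colon B_1\to B_2$ in $\B$ and an object $C$ of $\C$, and abbreviate
\[
  \phi \;:=\; \sigma_{f,\id_C}\circ\bigl(L_C(f)\ast\iota^{B_1}_C\bigr)\colon L_C(f)\Rightarrow M_{B_2}(\id_C)L_C(f),
\]
so that \ref{eq:distunit1} is precisely the assertion $\phi = \iota^{B_2}_C\ast L_C(f)$. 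The plan is to show that, for every $1$-morphism $g\colon C\to C'$ of $\C$, the $2$-morphism $M_{B_2}(g)\ast\phi$ is a two-sided inverse of the compositor $\gamma^{B_2}_{g,\id_C}\ast L_C(f)$; this pins $\phi$ down because the unit axiom of $M_{B_2}$ exhibits the expected answer $\iota^{B_2}_C\ast L_C(f)$ as another one-sided inverse of the same map, and a genuinely invertible map has a unique one-sided inverse.

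First I would instantiate \ref{eq:distcomp1} at $g_1 = \id_C$ and $g_2 = g$. Pre-composing the resulting equality of $2$-morphisms with the whiskered unitor that inserts a copy of $M_{B_1}(\id_C)$ annihilates the compositor $\gamma^{B_1}_{g,\id_C}$ on one side (by the unit axiom for $M_{B_1}$), leaving exactly $\sigma_{f,g}$; on the other side the interchange law reorganises the two $\sigma$-crossings so that it becomes $\bigl(\gamma^{B_2}_{g,\id_C}\ast L_C(f)\bigr)\circ\bigl(M_{B_2}(g)\ast\phi\bigr)\circ\sigma_{f,g}$. Since $\sigma_{f,g}$ is invertible we cancel it to get
\[
  \bigl(\gamma^{B_2}_{g,\id_C}\ast L_C(f)\bigr)\circ\bigl(M_{B_2}(g)\ast\phi\bigr)\;=\;\id_{M_{B_2}(g)L_C(f)},
\]
so $M_{B_2}(g)\ast\phi$ is a section of $\gamma^{B_2}_{g,\id_C}\ast L_C(f)$; and the unit axiom of $M_{B_2}$ simultaneously makes $M_{B_2}(g)\ast(\iota^{B_2}_C\ast L_C(f))$ a section of the same map.

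Next I would run the mirror of this computation. Because every $\sigma_{f,g}$ is invertible, \ref{eq:distcomp1} can be rearranged (keeping only the invertible crossings on one side) into the compatibility of the family $(\sigma_{f,g}^{-1})$ with the $M$-compositors — formally \ref{eq:distcomp1} with the crossings reversed. Carrying out the previous paragraph's manipulation for this identity, again using invertibility of $\sigma$ to cancel and this time annihilating a compositor of $M_{B_2}$ by its unit axiom, shows that $M_{B_2}(g)\ast\phi$ is also a retraction of $\gamma^{B_2}_{g,\id_C}\ast L_C(f)$. Hence that compositor is invertible with inverse $M_{B_2}(g)\ast\phi$, and so its unique section $M_{B_2}(g)\ast(\iota^{B_2}_C\ast L_C(f))$ must coincide with $M_{B_2}(g)\ast\phi$. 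Finally, to strip off the whiskering, take $g=\id_C$ and pre-compose both sides with $\iota^{B_2}_C\ast L_C(f)$; the interchange law turns this into $\iota^{B_2}_C\ast\phi = \iota^{B_2}_C\ast(\iota^{B_2}_C\ast L_C(f))$, and post-composing with $\gamma^{B_2}_{\id_C,\id_C}\ast L_C(f)$ — using the other unit axiom $\gamma^{B_2}_{\id_C,\id_C}\circ(\iota^{B_2}_C\ast M_{B_2}(\id_C))=\id$, which gives $(\gamma^{B_2}_{\id_C,\id_C}\ast L_C(f))\circ(\iota^{B_2}_C\ast\chi)=\chi$ for every $\chi\colon L_C(f)\Rightarrow M_{B_2}(\id_C)L_C(f)$ — yields $\phi = \iota^{B_2}_C\ast L_C(f)$, that is, \ref{eq:distunit1}. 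Only \ref{eq:distcomp1} (and, for \ref{eq:distunit2}, \ref{eq:distcomp2}), the unit axioms of the lax functors, invertibility of $\sigma$, and the interchange law enter; \ref{eq:distnat1} and \ref{eq:distnat2} are not needed for this direction.

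I expect the third step — producing the retraction from the mirrored form of \ref{eq:distcomp1} — to be the main obstacle. A priori the compositors of $M_{B_2}$ need not be invertible, so the section condition obtained in step two does not by itself determine $\phi$; it is exactly here that invertibility of $\sigma$ must be used a second time (through $\sigma^{-1}$) to force the compositors $\gamma^{B_2}_{g,\id_C}\ast L_C(f)$ to be invertible, which is what lets one identify $\phi$ with the canonical unitor whiskering rather than merely knowing it is a one-sided inverse. Getting the bookkeeping of whiskerings and interchanges right there, and in particular checking that the mirrored identity really does yield a retraction and not a second section, is the delicate part.
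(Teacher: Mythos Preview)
Your step~1 is correct and is in fact the heart of the matter: from \ref{eq:distcomp1} with $g_1=\id_C$, $g_2=g$, pre-composing with the whiskered unitor and cancelling the invertible $\sigma_{f,g}$ does yield
\[
  \bigl(\gamma^{B_2}_{g,\id_C}\ast L_C(f)\bigr)\circ\bigl(M_{B_2}(g)\ast\phi\bigr)=\id.
\]
But your step~2 fails, and with it the whole strategy of forcing $\gamma^{B_2}_{g,\id_C}\ast L_C(f)$ to be invertible. It simply is not invertible in general. Take $\B=\C=1$, let $L$ be the identity monad and $M$ a non-idempotent monad $(T,\mu,\eta)$ on some object of $\D$, and $\sigma=\id$ (trivially invertible). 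Then $L_C(f)=\id$, $\phi=\eta$, and your retraction claim becomes $T\eta\circ\mu=\id_{TT}$, which is false for, say, the list monad. So the ``mirrored \ref{eq:distcomp1}'' cannot produce a retraction; at best it produces another section, and two sections of a non-invertible map need not coincide.

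The fix is that you do not need step~2 at all. Once you have your section identity at $g=\id_C$, the paper's argument (in symbolic form) finishes in one line: post-compose $\phi$ with the identity $\bigl(\gamma^{B_2}_{\id,\id}\circ(\iota^{B_2}\ast M_{B_2}(\id))\bigr)L_C(f)$ coming from the unit law of $M_{B_2}$, then use interchange to slide $\iota^{B_2}$ past $\phi$:
\[
  \phi = (\gamma^{B_2}_{\id,\id}L_C(f))\circ(\iota^{B_2} M_{B_2}(\id)L_C(f))\circ\phi
       = (\gamma^{B_2}_{\id,\id}L_C(f))\circ(M_{B_2}(\id)\phi)\circ(\iota^{B_2} L_C(f))
       = \iota^{B_2}_C\ast L_C(f),
\]
the last equality being exactly your step~1 with $g=\id_C$. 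The paper's string-diagram proof is this same calculation unpacked: insert the $M_{B_2}$ unit--compositor pair, insert $\sigma\sigma^{-1}=\id$, apply \ref{eq:distcomp1}, collapse the $M_{B_1}$ unit--compositor pair, and cancel $\sigma\sigma^{-1}$ --- which is precisely the derivation of your section identity interleaved with the unit-law insertion. So your first paragraph already contains everything needed; it is the detour through a non-existent retraction that goes wrong.
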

\begin{proof} 
  Observe the following sequence of string diagrams.
  \\ \vspace{-3pt plus 1pt}
  \begingroup
  \allowdisplaybreaks
  \begin{align*}
   &\begin{tikzpicture}[scale=1.0,baseline={([yshift=-0.5ex]current bounding box.center)}]
    \begin{scope}[on layer=over]
    \path coordinate (mu) ++(0,0.5) coordinate (d);
    \path (d) ++(1.75,-2.75) coordinate (bl2) ++(0,-0.875) coordinate (bl);
    \path (d) ++(0.875,-1.375) coordinate (mbrL);
    \path (mbrL) ++(0,-1.0) coordinate[dot, functorM] (brL);
    \end{scope}
    \draw[functorM] (mbrL |- d) -- (brL);
    \draw[functorL, cross line] (bl) -- (bl2) to[out=90, in=-90]  (mu) -- (d);
    \coordinate (tl) at (d -| bl);
    \coordinate (bbr) at (bl -| mu);
    \coordinate (cornerNW) at ($(tl) + (0.5,0)$);
    \coordinate (cornerSE) at ($(bbr) + (-0.5,0)$);
    \draw (cornerNW) rectangle (cornerSE);
   \end{tikzpicture}
   \enspace=\enspace
   \begin{tikzpicture}[scale=1.0,baseline={([yshift=-0.5ex]current bounding box.center)}]
    \begin{scope}[on layer=over]
    \path coordinate (mu)
    +(0,0.5) coordinate (d);
    \path (mu) ++(1.5,-2.0) coordinate (bl2) ++(0,-1.125) coordinate (bl);
    \path (d) ++(1.5,0) coordinate (dL) ++ (0,-0.625) coordinate[dot, functorM] (muL)
    +(0.75,-0.75) coordinate (mblL)
    +(-0.75,-0.75) coordinate (mbrL);
    \path (mbrL) ++(0,-1.0) coordinate[dot, functorM] (brL);
    \path (mblL) ++(0,-1.0) coordinate[dot, functorM] (blL2) ++(0,-1.0) coordinate (blL);
    \end{scope}
    \draw[functorM] (blL2) -- (mblL) to[out=90, in=0] (muL.center) to[out=180, in=90] (mbrL) -- (brL)
                    (muL) -- (dL);
    \draw[functorL, cross line] (bl) -- (bl2) to[out=90, in=-90] (mu) -- (d);
    \coordinate (tl) at (dL -| blL);
    \coordinate (bbr) at (bl -| mu);
    \coordinate (cornerNW) at ($(tl) + (0.5,0)$);
    \coordinate (cornerSE) at ($(bbr) + (-0.5,0)$);
    \draw (cornerNW) rectangle (cornerSE);
   \end{tikzpicture}
   \enspace=\enspace
   \begin{tikzpicture}[scale=1.0,baseline={([yshift=-0.5ex]current bounding box.center)}]
    \begin{scope}[on layer=over]
    \path coordinate (mu)
    +(0,0.5) coordinate (d);
    \path (mu) ++(2.625,-1.625) coordinate (bl3) ++(-0.75,-0.875) coordinate (bl2) ++(0,-0.625) coordinate (bl);
    \path (d) ++(1.5,0) coordinate (dL) ++ (0,-0.625) coordinate[dot, functorM] (muL)
    +(0.75,-0.75) coordinate (mblL)
    +(-0.75,-0.75) coordinate (mbrL);
    \path (mbrL) ++(0,-1.0) coordinate[dot, functorM] (brL);
    \path (mblL) ++(0,-1.75) coordinate[dot, functorM] (blL);
    \end{scope}
    \draw[functorM] (blL) -- (mblL) to[out=90, in=0] (muL.center) to[out=180, in=90] (mbrL) -- (brL)
                    (muL) -- (dL);
    \draw[functorL, cross line] (bl) -- (bl2) to[out=90, in=-90] (bl3) to[out=90, in=-90]  (mu) -- (d);
    \coordinate (tl) at (dL -| bl3);
    \coordinate (bbr) at (bl -| mu);
    \coordinate (cornerNW) at ($(tl) + (0.25,0)$);
    \coordinate (cornerSE) at ($(bbr) + (-0.5,0)$);
    \draw (cornerNW) rectangle (cornerSE);
   \end{tikzpicture}
   \\[5pt] =\enspace&
   \begin{tikzpicture}[scale=1.0,baseline={([yshift=-0.5ex]current bounding box.center)}]
    \begin{scope}[on layer=over]
    \path coordinate (mu)
    +(0,0.125) coordinate (d);
    \path (mu) ++(2.625,-1.125) coordinate (bl3) ++ (0,-0.125) coordinate (bl32) ++(-1.125,-1.75) coordinate (bl2) ++(0,-0.75) coordinate (bl);
    \path (d) ++(1.5,0) coordinate (dL) ++ (0,-1.125) coordinate[dot, functorM] (muL)
    +(0.75,-0.75) coordinate (mblL)
    +(-0.75,-0.75) coordinate (mbrL);
    \path (mbrL) ++(0,-1.0) coordinate[dot, functorM] (brL);
    \path (mblL) ++(0,-1.5) coordinate[dot, functorM] (blL2) ++(0,-0.5) coordinate (blL);
    \end{scope}
    \draw[functorM] (blL2) -- (mblL) to[out=90, in=0] (muL.center) to[out=180, in=90] (mbrL) -- (brL)
                    (muL) -- (dL);
    \draw[functorL, cross line] (bl) -- (bl2) to[out=90, in=-90] (bl32) -- (bl3) to[out=90, in=-90]  (mu) -- (d);
    \coordinate (tl) at (dL -| bl3);
    \coordinate (bbr) at (bl -| mu);
    \coordinate (cornerNW) at ($(tl) + (0.25,0)$);
    \coordinate (cornerSE) at ($(bbr) + (-0.5,0)$);
    \draw (cornerNW) rectangle (cornerSE);
   \end{tikzpicture}
   \enspace=\enspace
   \begin{tikzpicture}[scale=1.0,baseline={([yshift=-0.5ex]current bounding box.center)}]
    \begin{scope}[on layer=over]
    \path coordinate (mu)
    +(0,0.375) coordinate (d);
    \path (mu) ++(1.5,-1.5) coordinate (bl3) ++ (0,-0.125) coordinate (bl32) ++(-1.5,-1.5) coordinate (bl2) ++(0,-0.375) coordinate (bl);
    \path (d) ++(0.75,0) coordinate (dL) ++ (0,-1.125) coordinate (muL);
    \path (muL) ++(0,-2.25) coordinate[dot, functorM] (blL2) ++(0,-0.5) coordinate (blL);
    \end{scope}
    \draw[functorM] (blL2) to[out=90, in=-90] (muL) -- (dL);
    \draw[functorL, cross line] (bl) -- (bl2) to[out=90, in=-90] (bl32) -- (bl3) to[out=90, in=-90]  (mu) -- (d);
    \coordinate (tl) at (dL -| bl3);
    \coordinate (bbr) at (bl -| mu);
    \coordinate (cornerNW) at ($(tl) + (0.25,0)$);
    \coordinate (cornerSE) at ($(bbr) + (-0.5,0)$);
    \draw (cornerNW) rectangle (cornerSE);
   \end{tikzpicture}
   \enspace=\enspace
   \begin{tikzpicture}[scale=1.0,baseline={([yshift=-0.5ex]current bounding box.center)}]
    \begin{scope}[on layer=over]
    \path coordinate (mu)
    +(0,0.375) coordinate (d);
    \path (mu) (0,-3.5) coordinate (bl);
    \path (d) ++(0.75,0) coordinate (dL) ++ (0,-1.125) coordinate (muL);
    \path (muL) ++(0,-1.0) coordinate[dot, functorM] (blL2) ++(0,-1.75) coordinate (blL);
    \end{scope}
    \draw[functorM] (blL2) to[out=90, in=-90] (muL) -- (dL);
    \draw[functorL, cross line] (bl) -- (mu) -- (d);
    \coordinate (tl) at (dL);
    \coordinate (bbr) at (bl -| mu);
    \coordinate (cornerNW) at ($(tl) + (0.5,0)$);
    \coordinate (cornerSE) at ($(bbr) + (-0.5,0)$);
    \draw (cornerNW) rectangle (cornerSE);
   \end{tikzpicture}
  \end{align*}
  \endgroup
  The other unit axiom is dual.
\end{proof}

\begin{remark}
 We have stated the results of this paper in terms of 2-categories for simplicity, but we believe they will still hold for bicategories.
 In the case of pseudofunctors it is easy to deduce this more general result from the result for 2-categories and the fact that every bicategory is biequivalent to a strict 2-category, since pseudofunctors respect biequivalence.
\end{remark}

Distributive laws of pseudofunctors generalise braidings of monoidal categories.
We can now recover the following result from \cite{joyal1986braided} as a special case of \cref{thm:pseudo_bifunctor}.
\begin{proposition}
  Let $\C$ be a monoidal category. Braidings on $\C$ are in bijection with strong monoidal structures on the tensor product ${\otimes}\colon \C \times \C \to \C$. In particular, $\C$ admits a braiding if and only if $\otimes$ admits the structure of a strong monoidal functor.
\end{proposition}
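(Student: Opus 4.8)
The plan is to recognise a braiding as an \emph{invertible} distributive law of pseudofunctors and then invoke \Cref{thm:pseudo_bifunctor}. By Mac Lane's strictification theorem we may assume $\C$ is a strict monoidal category; write $\Sigma\C$ for its delooping, the one-object $2$-category whose unique hom-category is $\C$ and whose composition of $1$-morphisms is $\otimes$ (say with the convention $g\circ f = f\otimes g$), and whose identity $1$-morphism is the monoidal unit $I$. (One could instead work with bicategories directly, as in the remark above, using that pseudofunctors and invertible distributive laws between them are biequivalence-invariant; I would also note there that braidings and strong monoidal structures on $\otimes$ transport along monoidal equivalences, so the passage to the strict case is harmless.)

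First I would take $\B = \C = \D = \Sigma\C$ and consider the elements of $\Dist(\Sigma\C,\Sigma\C,\Sigma\C)$ whose two families $L$ and $M$ are both the identity pseudofunctor $\Id_{\Sigma\C}$. Such a datum is a family of $2$-morphisms $\sigma_{X,Y}\colon X\otimes Y \to Y\otimes X$ in $\C$. Unwinding the six axioms when $L = M = \Id_{\Sigma\C}$: conditions \ref{eq:distcomp1} and \ref{eq:distcomp2} become the two hexagon identities, \ref{eq:distnat1} and \ref{eq:distnat2} become naturality of $\sigma$ in each of its two variables, and \ref{eq:distunit1} and \ref{eq:distunit2} become the compatibility of $\sigma$ with the unit. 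Hence an invertible such $\sigma$ is exactly a braiding on $\C$; moreover, by \Cref{prop:sigma_invertible_implies_unit} the unit conditions \ref{eq:distunit1} and \ref{eq:distunit2} are automatic once $\sigma$ is invertible, recovering the usual definition of a braiding via naturality plus the hexagons alone.

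Next I would apply collation. By \Cref{thm:lax_bifunctor,thm:pseudo_bifunctor}, collating such an invertible $\sigma$ yields a pseudo-bifunctor $P\colon \Sigma\C\times\Sigma\C \to \Sigma\C$. Since $\Sigma\C\times\Sigma\C$ is the delooping of the monoidal category $\C\times\C$ with its pointwise monoidal structure and $P$ is trivial on the unique object, $P$ is precisely a strong monoidal functor $\C\times\C \to \C$; and on $1$-morphisms $P(f,g) = M(g)L(f) = f\otimes g$ while $P(\alpha,\beta) = \beta\ast\alpha = \alpha\otimes\beta$, so the underlying functor of $P$ is $\otimes$. Thus collation sends braidings on $\C$ to strong monoidal structures on $\otimes$. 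Conversely, given a strong monoidal structure $(P',\gamma',\iota')$ on $\otimes$ — equivalently a pseudo-bifunctor $\Sigma\C\times\Sigma\C\to\Sigma\C$ with underlying functor $\otimes$ — the restrictions $P'(-,\ast)$ and $P'(\ast,-)$ are (using strictness of $I$) the identity pseudofunctor, and $T(P')$ from \Cref{thm:normal_lax_bifunctor} gives the invertible distributive law $\sigma_{f,g} = \gamma^{\prime\,-1}_{(\id,g),(f,\id)}\,\gamma'_{(f,\id),(\id,g)}$ between these identity families, i.e.\ a braiding. Because $L$ and $M$ are pinned to be $\Id_{\Sigma\C}$ throughout, the equivalence $\overline{K}$ of \Cref{thm:pseudo_bifunctor} restricts here to an honest bijection between braidings on $\C$ and strong monoidal structures on $\otimes$, which is the first claim; the second claim (that $\C$ is braidable iff $\otimes$ carries a strong monoidal structure) is then immediate.

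The main obstacle I expect is bookkeeping: verifying carefully that the six string-diagram axioms \ref{eq:distcomp1}--\ref{eq:distnat2}, specialised to $L=M=\Id_{\Sigma\C}$, really do reduce exactly to the two hexagons, naturality in each variable, and the (redundant) unit compatibilities — and in particular fixing the delooping conventions so that one knows which hexagon corresponds to \ref{eq:distcomp1} and which to \ref{eq:distcomp2}, the order of the tensor factors in $P(f,g)$, and where the unit object enters. A secondary point requiring care is confirming that, in the strict setting, the restricted pseudofunctors $P'(-,\ast)$ and $P'(\ast,-)$ are literally the identity (not merely isomorphic to it), so that one genuinely obtains a bijection rather than only an equivalence; failing that, one falls back on the biequivalence-invariant, bicategorical form of the preceding theorems as indicated in the remark.
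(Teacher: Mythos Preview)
Your proposal is correct and follows essentially the same approach as the paper: view $\C$ as a one-object bicategory, take $L=M=\Id$, identify the distributive law axioms with the braiding axioms (using \Cref{prop:sigma_invertible_implies_unit} to drop the unit conditions), and apply \Cref{thm:pseudo_bifunctor}. The paper's own proof is considerably more terse and does not spell out the converse direction or the bijection-versus-equivalence issue you flag at the end; your more careful treatment of these points is a genuine improvement rather than a divergence in method.
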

\begin{proof}
  We simply view $\C$ as a one-object bicategory and apply \cref{thm:pseudo_bifunctor} with $L = M = \Id_\C$ and $\sigma$ given by the braiding. We need only check conditions \ref{eq:distcomp1},  \ref{eq:distcomp2}, \ref{eq:distnat1} and \ref{eq:distnat2} by \cref{prop:sigma_invertible_implies_unit}.
  Naturality of the braiding gives conditions \ref{eq:distnat1} and \ref{eq:distnat2}, while \ref{eq:distcomp1} and \ref{eq:distcomp2} follow from the braiding axioms.
\end{proof}

There are also some other possible applications which we now briefly mention, but do not explore in detail.

Recall that a $V$-enriched category can be described as a lax functor from its class of objects, viewed as an \emph{indiscrete} category, to the one-object bicategory corresponding to $V$. Here it is interesting to restrict to distributive laws between constant families of enriched categories. Then collation gives a product-like operation on $V$-enriched categories generalising the tensor products of $V$-enriched categories defined in \cite{joyal1986braided} when $V$ is equipped with a braiding.

It is also possible to apply the construction to \emph{graded} monads (lax functors from one-object bicategories which have found application in computer science \cite{fujii2016towards}). This provides a way to combine an $M$-graded monad and an $N$-graded monad to give an $(M \times N)$-graded monad. We are as yet unsure of the significance of this construction.

It is shown in \cite{rosebrugh2002distributive} that distributive laws of monads in the bicategory of spans correspond to \emph{strict factorisation systems}. On the other hand, general lax functors from $\C$ into $\mathrm{Span}$ or strictly unitary lax functors into $\mathrm{Prof}$ correspond to functors into $\C$ by a version of the Grothendieck construction due to Bénabou.
It could be interesting to explore what distributive laws of lax functors give in this setting and what our construction means in terms of these associated functors.

\bibliographystyle{abbrv}
\bibliography{bibliography}
\end{document}